\newtheorem{thm}{Theorem}[subsection]
\newtheorem{cor}[thm]{Corollary}
\newtheorem{lem}[thm]{Lemma}
\theoremstyle{definition}
\theoremstyle{remark}
\newtheorem{rem}[thm]{Remark}
\numberwithin{equation}{subsection}
\begin{document}

\title[Binary matrices associated with the Pell sequence.]
 {Binary matrices of order 3 associated with the Pell sequence}

\author{Wilson Arley Martinez * , Samin Ingrith Ceron}

\address{Martinez, W.A.; Departmento de Matem\'aticas, Universidad del Cauca , Popay\'an , Colombia}
\email{wamartinez@unicauca.edu.co}

\address{Ceron, S.I.; Departmento de Matem\'aticas, Universidad del Cauca , Popay\'an , Colombia}
\email{sicbravo@gmail.com}

\thanks{This work was completed with the support of the Universidad del Cauca.}

\thanks{The author was also supported by the research group “Estructuras Algebraicas, Divulgaci\'on Matem\'atica y Teorías Asociadas. @DiTa”.}

\thanks{* Corresponding Author: Wilson Arley Martinez.}


\subjclass[2020]{11A05, 11A07, 11B39, 11B83, 15B34}

\keywords{Binary matrices; Binet formula; Greatest common divisor; Modified Pell sequence; Pell number; Pell–Lucas matrix; Pell–Lucas number; Recurrence relation; Sequences (mod n).}

\date{September 18, 2025; revised September 30, 2025}

\dedicatory{}

\commby{W.A.M}


\begin{abstract}

In this paper, we construct Pell matrices, analogous to Fibonacci matrices, to study algebraic properties of Pell numbers via linear algebra. This framework yields identities involving the trace, inverse, and determinant, as well as matrix products that generate recurrence relations and closed-form expressions. Additionally, we classify all binary $3 \times 3$ matrices that generate the Pell equation through conjugation, providing a complete characterization of such matrices.
\end{abstract}

\maketitle

\section*{Introduction}

The \emph{Pell sequence} $E_n$ is a classical sequence of integers defined recursively by the linear recurrence relation
\begin{equation*}
    E_{n+1} = 2E_n + E_{n-1}, \quad \text{for } n \ge 2, \quad \text{with initial values } E_1 = 1, \, E_2 = 2.
\end{equation*}
The first terms of the sequence are given by
\[
1, \; 2, \; 5, \; 12, \; 29, \; 70, \; 169, \; 408, \; \dots
\]
Matrix methods provide a powerful framework for studying linear recurrence relations. In particular, it is well-known (see, e.g.,\cite{Bicknell1975, Ercolano1979}) that the Pell numbers can be generated using powers of the $2 \times 2$ matrix
\[
M = 
\begin{bmatrix}
2 & 1 \\[2mm]
1 & 0
\end{bmatrix}.
\]
More precisely, for all integers $n \ge 1$, one has
\[
M^n = 
\begin{bmatrix}
E_{n+1} & E_n \\[1mm]
E_n & E_{n-1}
\end{bmatrix},
\]
which allows a compact and elegant representation of the sequence in terms of linear algebra.

\medskip 

The Pell sequence has been extensively studied, with particular attention to the greatest common divisor and divisibility properties of its terms (see, e.g., \cite{KilicTasci2005, McDaniel1991, MurtyPadhy2023}).  
For arbitrary integers $a$ and $b$, some of the most notable identities include
\begin{equation*}\label{eq:pell-identities}
\begin{aligned}
E_{n+a}E_{n+b} - E_n E_{n+a+b} &= E_a E_b (-1)^n, \\[1mm]
E_m E_{n+1} - E_{m+1} E_n &= (-1)^n E_{m-n}, \\[1mm]
E_n^2 + E_{n+1}^2 &= E_{2n+1}.
\end{aligned}
\end{equation*}
Other identities can be derived from these basic relations.

\medskip 

These identities play a central role in the analysis of sequences such as the Fibonacci and Jacobsthal numbers and their applications in number theory, combinatorics, and related areas (see, e.g., \cite{Koshy2001, KokenBozkurt2008, ThongkamSailadda2018}). Furthermore, matrix approaches provide a systematic way to derive new identities, study the algebraic structure of sequences, and explore connections with continued fractions and Diophantine equations.

\medskip 

In this work, we focus on the complete characterization of all $3 \times 3$ binary matrices associated with Pell numbers, examining their algebraic properties and deriving new identities. Moreover, we introduce the sequences $r_n$ and $b_n$ related to the Pell sequence, with $r_n$ being studied for its divisibility properties and its connections with Sidon sets. The sequences ${r_n}$ and ${b_n}$ are defined explicitly in Lemma ~\ref{l:5} and ~\ref{l:11}, where their main properties are presented.

\medskip 

Moreover, we naturally encounter a generalized Pell sequence, defined by the recurrence relation 
\[
Q_n = 2 Q_{n-1} + Q_{n-2}
\]
with initial conditions
\[
Q_0 = 1, \quad Q_1 = 3.
\]
This sequence can be viewed as a generalization of the classical Pell sequence shifted by one index (i.e., \(Q_n = q_{n+1}\) with \(q_0=1, q_1=1\); see \cite{Horadam1994}) and differs only in its initial terms.

\medskip 

In this paper, we present several identities involving Pell numbers and related matrices. Some of these identities are well-known and documented in the literature (e.g.,\cite{Serkland1972,Horadam1994,Bicknell1975,Lucas1878,Horadam1961,Koshy2014}), while others constitute original contributions of this work. For the classical identities, we provide precise references, whereas the new results are highlighted through the matrix approach and the methods developed in later sections.

\section{Recurrences Generated by Binary $3 \times 3$ Matrices with Determinant Zero}
Consider the generating matrix
\[
U = \begin{pmatrix}0 & 0 & 1\\ 1 & 1 & 1\\ 1 & 1 & 1\end{pmatrix}.
\]
In this section, we examine the Pell \textit{U}-matrix with determinant~0, which provides a matrix representation of Pell numbers. This matrix is employed to compute powers \(U^n\) and derive associated identities; to determine the characteristic roots and the Binet formulas for both the Pell sequence and its generalized form; to establish identities involving these sequences; and to obtain summation formulas for the Pell numbers.

\subsection{The Matrix Representation of Pell Numbers}
\begin{lem}\label{l:1} 
Let $u$ be the matrix
\[
u = \begin{pmatrix} 
                    0   &  0   &  1 \\
                    1   &  1   &  1 \\
                    1   &  1   &  1\\
\end{pmatrix}.
\]  
Then,

\[u^{n} =  \begin{pmatrix} \vspace{0.2cm}
                   E_{n-1}  & E_{n-1} & E_{n-1} + E_{n-2}\\ \vspace{0.2cm}
                   E_{n}  & E_{n}     & E_{n} + E_{n-1} \\
                   E_{n}  & E_{n}     & E_{n} + E_{n-1} \\
\end{pmatrix}  \]   
where \( n\in \mathbb{Z}^{+} \) and  \( E_{n} \) denotes the Pell numbers, defined by the recurrence relation  
\[
E_n = 2E_{n-1} + E_{n-2},
\]
with initial conditions \( E_0 = 0 \) and \( E_1 = 1 \). 
\end{lem}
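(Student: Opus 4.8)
The plan is to prove the formula for $u^n$ by induction on $n$, which is the natural approach for any statement about matrix powers satisfying a linear recurrence.

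First I would verify the base case. For $n=1$, the claimed formula reads
\[
u^1 = \begin{pmatrix} E_0 & E_0 & E_0 + E_{-1} \\ E_1 & E_1 & E_1 + E_0 \\ E_1 & E_1 & E_1 + E_0 \end{pmatrix},
\]
so I must check this agrees with $u$ itself. Using $E_0 = 0$, $E_1 = 1$, and the value $E_{-1} = 1$ obtained by running the recurrence $E_n = 2E_{n-1} + E_{n-2}$ backwards (from $E_1 = 2E_0 + E_{-1}$), every entry matches the given matrix $u$. It would be prudent to also spell out the $n=2$ case explicitly, both as a sanity check and because the induction couples $E_n$ with $E_{n-1}$ and $E_{n-2}$, so having two consecutive verified layers makes the recurrence step transparent.

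For the inductive step, I would assume the formula holds for some $n \ge 1$ and compute $u^{n+1} = u \cdot u^n$ (multiplying on the left by $u$ is more convenient here, since the first row of $u$ is sparse). The target is to show the product has the claimed form with $n$ replaced by $n+1$, i.e. with entries built from $E_n$, $E_{n-1}$, $E_{n+1}$. Carrying out the row-by-row multiplication, the key observation is that the second and third rows of $u$ are identical, so rows $2$ and $3$ of the product automatically coincide, matching the structural symmetry of the claimed matrix. The nontrivial content lies in showing that the resulting entries collapse correctly under the Pell recurrence: the $(2,1)$ entry of $u \cdot u^n$ will be $E_{n-1} + E_n + E_n = E_n + (E_n + E_{n-1})$, and I would invoke $E_{n+1} = 2E_n + E_{n-1}$ to identify this as $E_{n+1}$, as required. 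Analogous recurrence substitutions handle the remaining columns, particularly the third column where the combinations $E_{n} + E_{n-1}$ and $E_{n+1} + E_n$ appear.

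The main obstacle, though minor, is bookkeeping in the third column, whose entries have the mixed form $E_k + E_{k-1}$ rather than a single Pell number; I would need to track these sums carefully and repeatedly fold them back using the recurrence to confirm they advance by one index. A secondary subtlety is the convention $E_{-1} = 1$, which only surfaces in the base case but must be stated explicitly so the $n=1$ instance is well-defined. Once the recurrence identities are applied systematically to each of the six distinct entries (three per the two identical lower rows plus the top row), the induction closes and the formula holds for all $n \in \mathbb{Z}^+$.
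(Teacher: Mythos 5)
Your proposal is correct and follows essentially the same route as the paper: induction on $n$, with the Pell recurrence $E_{n+1}=2E_n+E_{n-1}$ used to collapse the entries of the product into the claimed form. The only differences are cosmetic --- you multiply by $u$ on the left rather than on the right, and you anchor the induction at $n=1$ (which forces the backward value $E_{-1}=1$, a point you identify and handle correctly), whereas the paper starts the induction at $n=2$ and thereby avoids $E_{-1}$ altogether.
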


\begin{proof}
We will use the principle of mathematical induction (PMI). When $n = 2$,
\[
u^{2} = \begin{bmatrix}  
E_{1}  & E_{1} & E_{1} + E_{0}\\ 
E_{2}  & E_{2} & E_{2} + E_{1} \\  
E_{2}  & E_{2} & E_{2} + E_{1}\\ 
\end{bmatrix} 
= 
\begin{bmatrix} 
1 & 1 & 1\\ 
2 & 2 & 3\\  
2 & 2 & 3\\ 
\end{bmatrix}
\]
so the result is true. We assume it is true for any positive integer $n = k$:

\[u^{k} =  \begin{pmatrix} \vspace{0.2cm}
                   E_{k-1}& E_{k-1}   & E_{k-1} + E_{k-2}\\ \vspace{0.2cm}
                   E_{k}  & E_{k}     & E_{k} + E_{k-1} \\
                   E_{k}  & E_{k}     & E_{k} + E_{k-1} \\
\end{pmatrix}\]  
Now, we show that it is true for $n = k + 1$. Then, we can write
\[u^{k+1} = u^k u 
= \begin{pmatrix} \vspace{0.2cm}
                   E_{k-1}& E_{k-1}   & E_{k-1} + E_{k-2}\\ \vspace{0.2cm}
                   E_{k}  & E_{k}     & E_{k} + E_{k-1} \\
                   E_{k}  & E_{k}     & E_{k} + E_{k-1} \\
\end{pmatrix}
\begin{pmatrix} 
                    0   &  0   &  1 \\
                    1   &  1   &  1 \\
                    1   &  1   &  1\\
\end{pmatrix} 
\]
\[ 
= 
\begin{pmatrix} \vspace{0.2cm}
                   E_{k}   & E_{k}   & E_{k} + E_{k-1}\\ \vspace{0.2cm}
                   E_{k+1} & E_{k+1} & E_{k+1} + E_{k} \\
                   E_{k+1} & E_{k+1} & E_{k+1} + E_{k} \\
\end{pmatrix}
\]
and the result follows due to the recurrence relation $E_k = 2E_{k-1} + E_{k-2}$
\end{proof}

Analogously, the following lemma can be proven by mathematical induction.

\begin{lem}\label{l:2} 
Let $u$ be the matrix   
\[
u = \begin{pmatrix} 
                    0   &  1   &  1 \\
                    0   &  1   &  1 \\
                    1   &  1   &  1\\
\end{pmatrix}.
\]  
Then, 

\[u^{n} =  \begin{pmatrix} \vspace{0.2cm}
                   E_{n-1} & E_{n} & E_{n}  \\  \vspace{0.2cm}
                   E_{n-1} & E_{n} & E_{n} \\
                   E_{n-1}+ E_{n-2}& E_{n}+ E_{n-1} & E_{n}+ E_{n-1}  \\
\end{pmatrix}  \]

where \( n\in \mathbb{Z}^{+} \) and  \( E_{n} \) denotes the Pell numbers, defined by the recurrence relation  
\[
E_n = 2E_{n-1} + E_{n-2},
\]
with initial conditions \( E_0 = 0 \) and \( E_1 = 1 \). 
\end{lem}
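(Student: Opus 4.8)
The plan is to prove Lemma~\ref{l:2} by mathematical induction on $n$, exactly paralleling the argument given for Lemma~\ref{l:1}. First I would verify the base case $n=2$ by computing $u^2$ directly and checking that the proposed closed form, evaluated at $n=2$, reproduces the same matrix. Using $E_0=0$, $E_1=1$, $E_2=2$, the claimed form at $n=2$ is
\[
\begin{pmatrix}
E_1 & E_2 & E_2 \\
E_1 & E_2 & E_2 \\
E_1+E_0 & E_2+E_1 & E_2+E_1
\end{pmatrix}
=
\begin{pmatrix}
1 & 2 & 2 \\
1 & 2 & 2 \\
1 & 3 & 3
\end{pmatrix},
\]
and I would confirm this equals $u^2$ obtained by squaring the given matrix.

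Next I would state the induction hypothesis: assume the formula holds for $n=k$, so that $u^k$ has the displayed form with indices $k-1$, $k$, and the corresponding sums. Then I would form the product $u^{k+1}=u^k\,u$ and carry out the multiplication. The key step is to observe that each entry of the product is a linear combination of Pell numbers that collapses, via the recurrence $E_{k+1}=2E_k+E_{k-1}$, into a single shifted Pell term. For instance, the top-left entry becomes $E_{k-1}\cdot 0 + E_k\cdot 0 + (E_k+E_{k-1})\cdot 1 = E_k + E_{k-1}$, and I would need this to match the $(1,1)$-entry of the $k+1$ form, namely $E_k$; this requires reconciling the column structure of $u$ with the placement of the sum-columns in the hypothesis, so I would track the multiplication column-by-column carefully. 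After simplification each entry should land on the pattern with indices shifted up by one, i.e.\ the $n=k+1$ instance of the claimed form.

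The main obstacle I anticipate is purely bookkeeping: because the matrix $u$ in this lemma has its "active" column and row arranged differently from the matrix in Lemma~\ref{l:1} (here the repeated rows are the first two and the distinguished row is the third, whereas the sum-entries sit in the bottom row rather than the right column), the indices must be propagated with care so that the sums $E_{k-1}+E_{k-2}$ telescope correctly into $E_k+E_{k-1}$ under one application of the recurrence. There is no conceptual difficulty beyond verifying that every one of the nine entries simplifies as required, so I would present the resulting product matrix, invoke $E_k=2E_{k-1}+E_{k-2}$ to rewrite the combined entries, and conclude that the formula holds for $n=k+1$, completing the induction.
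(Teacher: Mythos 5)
Your overall strategy --- induction on $n$ with base case $n=2$, then computing $u^{k+1}=u^{k}u$ and collapsing entries via $E_{k+1}=2E_k+E_{k-1}$ --- is exactly what the paper intends here (it states only that this lemma ``can be proven by mathematical induction'' analogously to Lemma~\ref{l:1}), and your base case is verified correctly. However, the one entry you actually work out in the induction step is computed wrongly, and you then treat the resulting mismatch as an unresolved bookkeeping problem rather than as the arithmetic slip it is. The $(1,1)$-entry of $u^{k}u$ is the dot product of the \emph{first row} of $u^{k}$, which by the induction hypothesis is $(E_{k-1},\,E_{k},\,E_{k})$, with the first column of $u$, which is $(0,0,1)^{T}$; this gives
\[
E_{k-1}\cdot 0 + E_{k}\cdot 0 + E_{k}\cdot 1 = E_{k},
\]
which is precisely the required $(1,1)$-entry of the $n=k+1$ form. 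Your value $E_{k}+E_{k-1}$ comes from plugging in a sum-entry of $u^{k}$ that does not belong to the first row (in this lemma the sums occupy the \emph{third row}, not the third column as in Lemma~\ref{l:1}). There is nothing to ``reconcile'': with the correct row, all nine entries come out right. Concretely, rows one and two of $u^{k}u$ are $(E_{k},\,2E_{k}+E_{k-1},\,2E_{k}+E_{k-1})=(E_{k},\,E_{k+1},\,E_{k+1})$, and row three is $(E_{k}+E_{k-1},\,E_{k+1}+E_{k},\,E_{k+1}+E_{k})$, using $2(E_{k}+E_{k-1})+(E_{k-1}+E_{k-2})=E_{k+1}+E_{k}$. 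Since your proposal explicitly leaves the key verification pending on a computation that, as written, fails, the induction step is not actually established; the repair is immediate, but it must be made. A separate minor point, which your write-up shares with the paper's own proof of Lemma~\ref{l:1}: starting the induction at $n=2$ leaves the case $n=1$ untouched, and the stated formula at $n=1$ involves $E_{-1}$, which is not covered by the initial conditions $E_0=0$, $E_1=1$.
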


\begin{cor}

The following matrices are similar, each being a Pell matrix with determinant zero.

\[
\begin{pmatrix} 
                    0   &  0   &  1 \\
                    1   &  1   &  1 \\
                    1   &  1   &  1\\
\end{pmatrix},
\begin{pmatrix} 
                    0   &  1   &  1 \\
                    0   &  1   &  1 \\
                    1   &  1   &  1\\
\end{pmatrix}.
\] 
Then, for every \( n \in \mathbb{N} \), we have that
\[
\operatorname{Tr}(u^n) = 2\left( E_{n-1} + E_n\right) 
\]
\end{cor}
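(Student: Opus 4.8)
The plan is to treat the corollary's two assertions separately, leaning on the explicit power formula of Lemma~\ref{l:1} (and Lemma~\ref{l:2}) so that almost no new computation is required.

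For the similarity, I would first record the key structural observation that the second matrix is exactly the transpose of the first: reading the columns of the matrix $u$ of Lemma~\ref{l:1} as rows reproduces the matrix $u$ of Lemma~\ref{l:2}. Since every square matrix over a field is similar to its transpose, the two matrices are similar. If a self-contained argument is preferred, I would instead compute the common characteristic polynomial $\lambda^3 - 2\lambda^2 - \lambda = \lambda(\lambda^2 - 2\lambda - 1)$, whose three roots $0$ and $1 \pm \sqrt{2}$ are distinct (and are precisely the characteristic roots of the Pell recurrence $x^2 = 2x + 1$); each matrix is therefore diagonalizable to the same diagonal matrix $\operatorname{diag}(0,\, 1+\sqrt{2},\, 1-\sqrt{2})$, and the similarity follows. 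The determinant-zero claim needs essentially no work: each matrix has two equal rows (rows $2$ and $3$ of the first, rows $1$ and $2$ of the second), so both determinants vanish.

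For the trace identity, I would simply read the three diagonal entries of $u^n$ off Lemma~\ref{l:1}, namely $E_{n-1}$, $E_n$, and $E_n + E_{n-1}$, and add them:
\[
\operatorname{Tr}(u^n) = E_{n-1} + E_n + (E_n + E_{n-1}) = 2(E_{n-1} + E_n).
\]
Because similar matrices have equal traces, the identical value is obtained from the diagonal of the power of the second matrix in Lemma~\ref{l:2}, so the formula is consistent across both representations.

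There is essentially no serious obstacle, since the two lemmas have already done the heavy lifting. The only point demanding a little care is the justification of the similarity: the quickest route invokes the standard fact that a matrix is similar to its transpose, while the most elementary route is the eigenvalue computation above, which has the pleasant feature of exhibiting the Pell characteristic roots $1 \pm \sqrt{2}$ directly.
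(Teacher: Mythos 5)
Your proposal is correct. The paper states this corollary without any proof, and your argument supplies exactly the justification the authors leave implicit: the trace identity is read directly off the diagonal of $u^{n}$ in Lemma~\ref{l:1} (consistently with Lemma~\ref{l:2}), the determinant vanishes because each matrix has a repeated row, and the similarity follows either from the transpose relation you note or from the common simple spectrum $\{0,\ 1+\sqrt{2},\ 1-\sqrt{2}\}$ --- the very eigenvalue computation the paper performs later in its diagonalization theorems.
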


A formula for $E_{m+n}$ is already known in the literature(see, e.g., \cite{Koshy2014}, p. 158). The identity we present here is algebraically equivalent, but provides an alternative form which, to the best of our knowledge, has not been emphasized before.

\begin{cor}
For $n\geq 1$ and  $m\geq 1$, we have the following identity:
\begin{align}
E_{m+n-1}                &= 2 E_{m-1}E_{n-1} +   E_{n}( E_{m-1}+E_{m-2} ),\\
E_{m+n-1}+E_{m+n-2}      &=2 E_{m-1}E_{n}+  ( E_{m-1}+E_{m-2} )( E_{n}+E_{n-1} ),\\
E_{m+n}                  &=E_{m}(E_{n}+E_{n-1})+ E_{n}(E_{m}+E_{m-1}),\\
E_{m+n}+E_{m+n-1}        &=2 E_{m}E_{n} +   (E_{m}+E_{m-1})(E_{n}+E_{n-1}). 
\end{align}
\end{cor}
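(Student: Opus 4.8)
The plan is to derive all four identities simultaneously from the matrix power formula in Lemma~\ref{l:1} by exploiting the semigroup property $u^{m+n} = u^m u^n$. The key observation is that each entry of $u^{m+n}$ can be computed in two ways: directly via the closed form from Lemma~\ref{l:1} with index $m+n$, and via the matrix product of $u^m$ and $u^n$. Equating corresponding entries will yield the desired identities, since the entries of $u^n$ are explicit linear combinations of consecutive Pell numbers.

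First I would write out $u^m$ and $u^n$ using the formula from Lemma~\ref{l:1}, then compute the matrix product $u^m u^n$ entry by entry. Because both matrices have a highly repetitive structure (the second and third rows of $u^n$ are identical, and the first two columns coincide), the product collapses into only a few genuinely distinct entries. I would then match these against the entries of $u^{m+n}$: the $(1,1)$-entry gives $E_{m+n-1}$, the $(1,3)$-entry gives $E_{m+n-1}+E_{m+n-2}$, the $(2,1)$-entry gives $E_{m+n}$, and the $(2,3)$-entry gives $E_{m+n}+E_{m+n-1}$. These four positions correspond precisely to the four identities in the stated order. For instance, the $(2,1)$-entry of the product is $E_m(E_{n}+E_{n-1}) + E_n(E_m+E_{m-1})$ after collecting the contributions from the first two columns of $u^m$ hitting the appropriate column of $u^n$, which matches the right-hand side of the third identity.

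The main step requiring care is the bookkeeping of the matrix multiplication: one must track which rows of $u^m$ pair with which columns of $u^n$, keeping in mind that the $(i,1)$ and $(i,2)$ entries of $u^n$ are equal while the $(i,3)$ entry carries the extra summand $E_{\bullet}+E_{\bullet-1}$. The repetition means several entries of the product are automatically equal, which serves as a useful internal consistency check. I expect the main (though still routine) obstacle to be verifying that the collected terms reproduce exactly the asymmetric-looking right-hand sides, particularly the first identity $E_{m+n-1} = 2E_{m-1}E_{n-1} + E_n(E_{m-1}+E_{m-2})$, where the factor of $2$ arises from the two equal columns of $u^m$ contributing to the same entry.

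As an alternative that avoids multiplying full $3\times 3$ matrices, I could instead appeal to Lemma~\ref{l:2}, whose matrix is similar to that of Lemma~\ref{l:1} by the preceding corollary, and extract the same identities from its transposed layout; comparing the two presentations provides independent confirmation. In either case, no external Pell identities are needed: everything follows from the single relation $u^{m+n}=u^m u^n$ together with the explicit entries supplied by the lemma, so the proof is self-contained modulo the elementary matrix arithmetic.
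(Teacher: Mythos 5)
Your overall strategy is the same as the paper's: write out $u^m$ and $u^n$ from Lemma~\ref{l:1}, form the product $u^m u^n$, and equate entries with $u^{m+n}$. For identities (2)--(4) this works exactly as you describe: the $(1,3)$, $(2,1)$ and $(2,3)$ entries give the stated right-hand sides after one application of the recurrence $E_n = 2E_{n-1}+E_{n-2}$ to collapse the sums.

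However, the step you yourself flag as the delicate one --- the first identity --- would fail. The $(1,1)$ entry of $u^m u^n$ is the row $(E_{m-1},\,E_{m-1},\,E_{m-1}+E_{m-2})$ against the column $(E_{n-1},\,E_{n},\,E_{n})^{T}$, namely
\[
E_{m-1}E_{n-1} + E_{m-1}E_{n} + (E_{m-1}+E_{m-2})E_{n} \;=\; E_{m-1}E_{n-1} + E_{m}E_{n}.
\]
The factor of $2$ you attribute to ``the two equal columns of $u^m$'' does not attach to the term $E_{m-1}E_{n-1}$, because the first two entries of the relevant column of $u^n$ are $E_{n-1}$ and $E_{n}$, which are not equal. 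So the matrix computation yields $E_{m+n-1}=E_{m-1}E_{n-1}+E_{m}E_{n}$, not the displayed $E_{m+n-1}=2E_{m-1}E_{n-1}+E_{n}(E_{m-1}+E_{m-2})$; the latter is in fact false as stated (for $m=n=2$ the left side is $E_3=5$ while the right side is $2\cdot 1\cdot 1+2\cdot(1+0)=4$). Your plan therefore proves (2)--(4) and a corrected form of (1), but it cannot ``reproduce exactly'' the first right-hand side, and no amount of bookkeeping will make it do so. (The paper's own proof asserts without computation that the $(1,1)$ entry equals $2E_{m-1}E_{n-1}+E_{n}(E_{m-1}+E_{m-2})$, so it contains the same slip; the correct entry forces the first identity to read $E_{m+n-1}=E_{m}E_{n}+E_{m-1}E_{n-1}$.)
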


\begin{proof}

For \( m, n \geq 1 \), we know that \( U^{m+n} = U^m U^n \). Since we have defined \( U^n \) as follows, the same expression in matrix form is:

\[
U^m = \begin{pmatrix} \vspace{0.2cm}
                   E_{m-1}  & E_{m-1} & E_{m-1} + E_{m-2}\\ \vspace{0.2cm}
                   E_{m}  & E_{m}     & E_{m} + E_{m-1} \\
                   E_{m}  & E_{m}     & E_{m} + E_{m-1} \\
\end{pmatrix}, 
\] 

\[
U^{n} =  \begin{pmatrix} \vspace{0.2cm}
                   E_{n-1}  & E_{n-1} & E_{n-1} + E_{n-2}\\ \vspace{0.2cm}
                   E_{n}    & E_{n}     & E_{n} + E_{n-1} \\
                   E_{n}    & E_{n}     & E_{n} + E_{n-1} \\
\end{pmatrix} . \]   
The product \( U^m U^n \) is represented by the following \( 3 \times 3 \) matrix:
\[
U^m U^n =
\begin{bmatrix}
A & A & B \\
C & C & D \\
C & C & D
\end{bmatrix}
\]
where:

\[
\begin{aligned}
A &= 2 E_{m-1}E_{n-1} +   E_{n}( E_{m-1}+E_{m-2} ),   \\
B &=  2 E_{m-1}E_{n}+  ( E_{m-1}+E_{m-2} )( E_{n}+E_{n-1} ),  \\
C &=  2 E_{m}E_{n} +   E_{m}E_{n-1}+E_{n}E_{m-1},\\  
D &=  2 E_{m}E_{n} +   (E_{m}+E_{m-1})(E_{n}+E_{n-1}).\\  
\end{aligned}
\] 
On the other hand, we have:

\[
U^{m+n} = 
 \begin{pmatrix} \vspace{0.2cm}
                   E_{m+n-1}  & E_{m+n-1}  & E_{m+n-1} + E_{m+n-2}\\ \vspace{0.2cm}
                   E_{m+n}    & E_{m+n}     & E_{m+n} + E_{m+n-1} \\
                   E_{m+n}    & E_{m+n}     & E_{m+n} + E_{m+n-1} \\
\end{pmatrix}, 
\]
Equating the two matrices obtained via matrix multiplication yields the identities stated in the corollary.
\end{proof}

\subsection{Diagonalization of the Generating Matrix and Binet’s Formula}

The Binet-type formula~\ref{i:1} for the Pell numbers and the generalized Pell sequence~\ref{i:2} are classical results and follow from the general theory of Lucas sequences; see, for example, \cite{Lucas1878,Horadam1994}, as well as modern treatments in \cite{Koshy2014,Bicknell1975}. We include them here for completeness. Furthermore, identity~\ref{i:4} can be found in \cite{Horadam1994}.

\begin{thm}
Let $n$ be an integer. The well-known Binet-like formulas of the Pell numbers and the generalized Pell sequence are
\begin{align}
\label{i:1} E_n &= \dfrac{1}{2\sqrt{2}}\left[ (1+\sqrt{2})^{n} - (1-\sqrt{2})^{n}  \right],  \\ 
\label{i:2} Q_{n-1} &= \dfrac{1}{2}\left[ (1+\sqrt{2})^{n} + (1-\sqrt{2})^{n}\right], \\  
\label{i:4}   Q_{n-1}  &= E_{n}+E_{n-1},  \\  
\label{i:3}  E_{n-1}+E_{n-2} &= 2E_{n}-Q_{n-1}. 
\end{align}

\end{thm}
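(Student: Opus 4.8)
The plan is to diagonalize the generating matrix and read off the Binet formulas from the eigenvalue structure, which is the standard route for a theorem of this type. First I would compute the characteristic polynomial of the $2\times 2$ Pell matrix (or equivalently the nontrivial part of the $3\times3$ matrix $U$); since $E_n$ satisfies $E_n = 2E_{n-1}+E_{n-2}$, the associated characteristic equation is $x^2 - 2x - 1 = 0$, whose roots are $\alpha = 1+\sqrt{2}$ and $\beta = 1-\sqrt{2}$. I would record the elementary facts $\alpha+\beta = 2$, $\alpha\beta = -1$, and $\alpha-\beta = 2\sqrt{2}$, which drive every subsequent computation.

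For identity~\ref{i:1}, the plan is to write the general solution of the recurrence as $E_n = A\alpha^n + B\beta^n$ and solve for $A,B$ using the initial conditions $E_0 = 0$ and $E_1 = 1$. The condition $E_0 = 0$ forces $A = -B$, and $E_1 = 1$ then gives $A(\alpha-\beta) = 1$, so $A = 1/(2\sqrt{2})$; substituting yields the stated closed form. For identity~\ref{i:2}, I would repeat the argument for $Q_{n-1} = A'\alpha^n + B'\beta^n$, but now the initial data is more delicate: the generalized sequence $Q_n = 2Q_{n-1}+Q_{n-2}$ has $Q_0 = 1$, $Q_1 = 3$, and the indexing shift $Q_{n-1}$ must be tracked carefully. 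Matching at two consecutive indices (for instance $Q_{-1}$ and $Q_0$, or equivalently using $Q_0=1$ together with the relation to $\alpha,\beta$) should give $A' = B' = 1/2$, producing the symmetric formula $Q_{n-1} = \tfrac{1}{2}(\alpha^n + \beta^n)$.

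The identities~\ref{i:4} and~\ref{i:3} I would then derive purely algebraically from~\ref{i:1} and~\ref{i:2}, without returning to the matrix. For~\ref{i:4}, I would compute $E_n + E_{n-1}$ by substituting the Binet expressions and simplifying: $E_n + E_{n-1} = \tfrac{1}{2\sqrt 2}\bigl[(\alpha^n - \beta^n) + (\alpha^{n-1}-\beta^{n-1})\bigr]$, then factor $\alpha^{n-1}(\alpha+1)$ and use the key observation that $\alpha + 1 = 2 + \sqrt 2 = \sqrt 2\,\alpha$ (and analogously $\beta + 1 = \sqrt 2\,\beta$, up to sign), which converts the $1/\sqrt 2$ factor into the clean form $\tfrac{1}{2}(\alpha^n+\beta^n) = Q_{n-1}$. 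For~\ref{i:3}, I would note that $E_{n-1}+E_{n-2}$ is, by the recurrence $E_n = 2E_{n-1}+E_{n-2}$, exactly $E_n - 2E_{n-1} + 2E_{n-1} = \ldots$; more directly, $E_{n-1}+E_{n-2} = E_n - E_{n-1}$ rearranged, combined with~\ref{i:4} to express things in terms of $Q_{n-1}$, giving $E_{n-1}+E_{n-2} = 2E_n - Q_{n-1}$.

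The main obstacle I anticipate is bookkeeping with the index shift in the generalized sequence: the statement writes $Q_{n-1}$ rather than $Q_n$, so I must be scrupulous about which power of $\alpha,\beta$ corresponds to which $Q$ value when fitting the initial conditions $Q_0=1,\,Q_1=3$, and verify consistency with the stated relation $Q_n = q_{n+1}$ from the introduction. The eigenvalue identities $\alpha+1 = \sqrt 2\,\alpha$ type relations for~\ref{i:4} are the other place where a sign or a $\sqrt 2$ can easily go astray, so I would double-check that factoring step against a small numerical case such as $n=2$, where $E_2 + E_1 = 2 + 1 = 3 = Q_1$ confirms the identity.
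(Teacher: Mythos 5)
Your proposal is correct, but it follows a genuinely different route from the paper. The paper proves all four identities by explicitly diagonalizing the singular $3\times 3$ binary matrix $U$ of Lemma~\ref{l:1}: it computes the eigenvalues $0,\,1\pm\sqrt{2}$, writes out $P$ and $P^{-1}$ concretely, forms $U^{n}=PD^{n}P^{-1}$, and then equates the resulting entries (expressed in $\lambda_2^{n},\lambda_3^{n}$) with the Pell-number entries of $U^{n}$ already established in Lemma~\ref{l:1}; the four identities drop out of this entrywise comparison simultaneously. You instead use the classical two-term ansatz for the second-order recurrence: solve $x^{2}-2x-1=0$, fit $E_n=A\alpha^{n}+B\beta^{n}$ and $Q_{n-1}=A'\alpha^{n}+B'\beta^{n}$ to the initial data, and then derive~\ref{i:4} via the relations $\alpha+1=\sqrt{2}\,\alpha$, $\beta+1=-\sqrt{2}\,\beta$ and~\ref{i:3} by combining the recurrence with~\ref{i:4}. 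Your computations check out (including the sign in $\beta+1=-\sqrt{2}\,\beta$, which is absorbed by the minus sign in the Binet numerator, and the fit $Q_0=\tfrac12(\alpha+\beta)=1$, $Q_1=\tfrac12(\alpha^{2}+\beta^{2})=3$). What your approach buys is independence from Lemma~\ref{l:1} and a cleaner treatment of negative $n$: since $U$ has determinant zero, the paper's formula $U^{n}=PD^{n}P^{-1}$ cannot literally hold for negative integers, whereas your ansatz extends to all $n\in\mathbb{Z}$ because $\alpha\beta=-1\neq 0$. What the paper's approach buys is consistency with its matrix-theoretic framework and the simultaneous derivation of all entries of $U^{n}$, which it reuses in later sections. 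The only point worth tightening in your write-up is the derivation of~\ref{i:3}: the clean statement is that the recurrence gives $E_{n-1}+E_{n-2}=E_n-E_{n-1}$, which equals $2E_n-(E_n+E_{n-1})=2E_n-Q_{n-1}$ by~\ref{i:4}; your sketch of that step is garbled mid-sentence even though the conclusion you state is right.
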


\begin{proof}
Let the matrix $U$ be as in Lemma~\ref{l:1}. If we calculate the eigenvalues and eigenvectors of the matrix $U$ are
\[
\lambda_1=0, \quad \lambda_2=1-\sqrt{2}, \quad \lambda_3=1+\sqrt{2}
\]
and

\[v_1=
\begin{pmatrix}
-1 \\
1 \\
0
\end{pmatrix}, \, v_2=
\begin{pmatrix}
-1-\sqrt{2} \\
1 \\
1
\end{pmatrix}, \, v_3=
\begin{pmatrix}
-1+\sqrt{2} \\
1 \\
1
\end{pmatrix}
\]
respectively. Then, we can diagonalize of the matrix $U$ by
\[
D = P^{-1}UP
\]
where
\[
P = (v_1, v_2, , v_3) = 
\begin{pmatrix}
-1 & -\lambda_3 & -\lambda_2 \\
1 & 1 & 1 \\
0 & 1 & 1
\end{pmatrix},
\, 
P^{-1} = 
\begin{pmatrix}\vspace{0.2cm}
0 & 1 & -1 \\ \vspace{0.2cm}
-\dfrac{1}{2\sqrt{2}} & -\dfrac{1}{2\sqrt{2}} & \dfrac{1}{2} \\
\dfrac{1}{2\sqrt{2}}  & \dfrac{1}{2\sqrt{2}} & \dfrac{1}{2}
\end{pmatrix}
\]
and
\[
D = \text{diag}(\lambda_1,\lambda_2,,\lambda_3) =
\begin{pmatrix}
0 & 0 & 0 \\
0 & 1-\sqrt{2} & 0 \\
0 & 0 & 1+\sqrt{2}
\end{pmatrix}.
\]
From the properties of the similar matrices, we can write 
\begin{equation}\label{i:35}
U^n = P D^n P^{-1}
\end{equation}
where $n$ is any integer and
\[
D^n = 
\begin{pmatrix}
0 & 0 & 0 \\  \vspace{0.2cm}
0 & (1-\sqrt{2})^{n} & 0 \\
0 & 0 & (1+\sqrt{2})^{n}
\end{pmatrix} = \begin{pmatrix}
0 & 0 & 0 \\
0 & Q_{n-1}-E_{n}\sqrt{2} & 0 \\
0 & 0 & Q_{n-1} + E_{n}\sqrt{2} 
\end{pmatrix}.
\]
By equation~\ref{i:35}, we get
\[
U^{n}=\begin{pmatrix} \vspace{0.2cm}
                   E_{n-1}  & E_{n-1} & E_{n-1} + E_{n-2}\\ \vspace{0.2cm}
                   E_{n}  & E_{n}     & E_{n} + E_{n-1} \\
                   E_{n}  & E_{n}     & E_{n} + E_{n-1} \\
\end{pmatrix}
=
\begin{bmatrix}\vspace{0.2cm}
Q_{n-1}-E_{n}  & Q_{n-1}-E_{n} &  -Q_{n-1}+2E_{n}\\ \vspace{0.2cm}
         E_{n} & E_{n}         &  Q_{n-1}    \\ \vspace{0.2cm}
         E_{n} &  E_{n}        &  Q_{n-1}   
\end{bmatrix}
\]
\[
= 
\begin{pmatrix}
\dfrac{1}{2\sqrt{2}}\lambda_2\lambda_3\left(\lambda_2^{n-1}-\lambda_3^{n-1} \right)   & \dfrac{1}{2\sqrt{2}}\lambda_2\lambda_3\left(\lambda_2^{n-1}-\lambda_3^{n-1} \right)  & \dfrac{1}{2}\lambda_2\lambda_3\left(-\lambda_3^{n-1}-\lambda_2^{n-1} \right) \\  \vspace{0.2cm}
\dfrac{1}{2\sqrt{2}}\left( \lambda_3^{n} - \lambda_2^{n}  \right) & \dfrac{1}{2\sqrt{2}}\left( \lambda_3^{n} - \lambda_2^{n}  \right)  & \dfrac{1}{2}\left( \lambda_3^{n} + \lambda_2^{n}\right) \\
\dfrac{1}{2\sqrt{2}}\left( \lambda_3^{n} - \lambda_2^{n}  \right)  & \dfrac{1}{2\sqrt{2}}\left( \lambda_3^{n} - \lambda_2^{n}  \right) & \dfrac{1}{2}\left( \lambda_3^{n} + \lambda_2^{n}  \right)
\end{pmatrix}.
\]
Thus, the proof is completed. 
\end{proof}

\begin{thm}
Let $n$ be an integer. The well-known Binet-like formula of the Pell numbers is
\begin{align}
E_{n-1} + E_{n-2} &= \dfrac{1}{2}\left[(1-\sqrt{2})^{n-1}+(1+\sqrt{2})^{n-1} \right], \\ 
E_{n} + E_{n-1} &= \dfrac{1}{2}\left[(1-\sqrt{2})^{n}+(1+\sqrt{2})^{n} \right], \\ 
E_{n-1}  &= \dfrac{1}{2\sqrt{2}}\left[(1+\sqrt{2})^{n-1}-(1-\sqrt{2})^{n-1} \right]. 
\end{align}

\end{thm}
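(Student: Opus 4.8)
The plan is to obtain all three formulas as immediate consequences of the identities proved in the preceding theorem—namely the Binet formula~\eqref{i:1} for $E_n$, the formula~\eqref{i:2} for $Q_{n-1}$, and the relation~\eqref{i:4} linking the two sequences—by performing suitable index shifts. Since those identities hold for every integer $n$, each substitution below is automatically valid for all $n \in \mathbb{Z}$. Throughout I abbreviate $\lambda_2 = 1-\sqrt{2}$ and $\lambda_3 = 1+\sqrt{2}$, so that $\lambda_2\lambda_3 = (1-\sqrt{2})(1+\sqrt{2}) = -1$.

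First I would dispatch the third formula, which is nothing more than~\eqref{i:1} with $n$ replaced by $n-1$: the equality $E_m = \tfrac{1}{2\sqrt{2}}(\lambda_3^{m} - \lambda_2^{m})$ evaluated at $m = n-1$ gives exactly $E_{n-1} = \tfrac{1}{2\sqrt{2}}\left[(1+\sqrt{2})^{n-1} - (1-\sqrt{2})^{n-1}\right]$. Next, for the second formula I would chain~\eqref{i:4}, which asserts $E_n + E_{n-1} = Q_{n-1}$, together with~\eqref{i:2}, which expresses $Q_{n-1} = \tfrac{1}{2}(\lambda_3^{n} + \lambda_2^{n})$; composing the two equalities yields the claim directly. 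The first formula then follows by the same reasoning after replacing $n$ by $n-1$ everywhere: \eqref{i:4} gives $E_{n-1} + E_{n-2} = Q_{n-2}$, while~\eqref{i:2} evaluated at index $n-1$ gives $Q_{n-2} = \tfrac{1}{2}(\lambda_3^{n-1} + \lambda_2^{n-1})$.

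As an alternative that avoids invoking the auxiliary sequence $Q_{n-1}$, I could read all three identities straight off the matrix equation $U^n = P D^n P^{-1}$ computed in the proof of the preceding theorem. Using $\lambda_2\lambda_3 = -1$, the $(1,1)$ entry $\tfrac{1}{2\sqrt{2}}\lambda_2\lambda_3(\lambda_2^{n-1} - \lambda_3^{n-1})$ collapses to $\tfrac{1}{2\sqrt{2}}(\lambda_3^{n-1} - \lambda_2^{n-1}) = E_{n-1}$; the $(2,3)$ entry $\tfrac{1}{2}(\lambda_3^{n} + \lambda_2^{n})$ equals $E_n + E_{n-1}$; and the $(1,3)$ entry $\tfrac{1}{2}\lambda_2\lambda_3(-\lambda_3^{n-1} - \lambda_2^{n-1})$ simplifies to $\tfrac{1}{2}(\lambda_3^{n-1} + \lambda_2^{n-1}) = E_{n-1} + E_{n-2}$, matching the three right-hand sides.

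I do not anticipate any genuine obstacle, since every step is an elementary substitution rather than an independent computation; the only points demanding care are the consistent bookkeeping of the shift $n \mapsto n-1$ across~\eqref{i:2} and~\eqref{i:4}, and the sign introduced by the relation $\lambda_2\lambda_3 = -1$ when the matrix entries of the second approach are simplified.
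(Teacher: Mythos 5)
Your argument is correct, but it takes a genuinely different route from the paper. The paper proves this theorem by diagonalizing the \emph{second} determinant-zero matrix (the one from Lemma~\ref{l:2}, with rows $(0,1,1),(0,1,1),(1,1,1)$), computing $U^n = PD^nP^{-1}$ afresh with new eigenvectors and a new $P^{-1}$, and then equating the resulting entries with the Pell-number form of $U^n$ from Lemma~\ref{l:2}. You instead derive all three identities by pure index bookkeeping from the previous theorem: the third formula is \eqref{i:1} at $n-1$, the second is the composition of \eqref{i:4} with \eqref{i:2}, and the first is that same composition shifted to $n-1$; since those identities are stated for every integer $n$, the shifts are legitimate and each step checks out. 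Your fallback of reading the entries off the already-computed matrix identity for the Lemma~\ref{l:1} matrix (using $\lambda_2\lambda_3=-1$) is also sound and matches the three right-hand sides exactly. What your approach buys is economy—no second diagonalization, no new eigenvector computation—and it makes transparent that this theorem contains no information beyond the previous one. What the paper's approach buys is a self-contained verification tied to the second conjugacy representative, which fits its programme of treating each binary matrix on its own terms; it also produces the explicit $P$, $P^{-1}$ for that matrix, which are reused nowhere but exhibited for completeness. Either proof is acceptable; yours is the shorter and arguably cleaner one.
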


\begin{proof}
Let the matrix $U$ be as in Lemma~\ref{l:2}. If we calculate the eigenvalues and eigenvectors of the matrix $U$ are
\[
\lambda_1=0, \quad \lambda_2=1-\sqrt{2}, \quad \lambda_3=1+\sqrt{2}
\]
and

\[v_1=
\begin{pmatrix}
0 \\ \\
-1 \\ \\
1
\end{pmatrix}, \, v_2=
\begin{pmatrix}
-\dfrac{\sqrt{2}}{2} \\ \\
-\dfrac{\sqrt{2}}{2}  \\ \\
1
\end{pmatrix}, \, v_3=
\begin{pmatrix}
\dfrac{\sqrt{2}}{2} \\ \\
\dfrac{\sqrt{2}}{2}\\ \\
1
\end{pmatrix}
\]
respectively. Then, we can diagonalize of the matrix $U$ by
\[
D = P^{-1}UP
\]
where
\[
P = (v_1, v_2, , v_3) = 
\begin{pmatrix}
0 & \dfrac{\lambda_2  -1 }{2}&  \dfrac{\lambda_3  -1 }{2} \\ \\
-1 & \dfrac{\lambda_2  -1 }{2} & \dfrac{\lambda_3  -1 }{2} \\ \\
1 & 1 & 1
\end{pmatrix},
\quad 
P^{-1} = 
\begin{pmatrix}\vspace{0.2cm}
1 & -1 & 0 \\ \vspace{0.2cm}
\dfrac{-\sqrt{2}-1}{2} & \dfrac{1}{2} & \dfrac{1}{2} \\
\dfrac{\sqrt{2}-1}{2} & \dfrac{1}{2} & \dfrac{1}{2}
\end{pmatrix}
\]
and
\[
D = \text{diag}(\lambda_1,\lambda_2,,\lambda_3) =
\begin{pmatrix}
0 & 0 & 0 \\
0 & 1-\sqrt{2} & 0 \\
0 & 0 & 1+\sqrt{2}
\end{pmatrix}.
\]
From the properties of the similar matrices, we can write 
\begin{equation}\label{i:36}
U^n = P D^n P^{-1}
\end{equation}
where $n$ is any integer and
\[
D^n = 
\begin{pmatrix}
0 & 0 & 0 \\  \vspace{0.2cm}
0 & (1-\sqrt{2})^{n} & 0 \\
0 & 0 & (1+\sqrt{2})^{n}
\end{pmatrix}=\begin{pmatrix}
0 & 0 & 0 \\
0 & Q_{n-1}-E_{n}\sqrt{2} & 0 \\
0 & 0 & Q_{n-1} + E_{n}\sqrt{2} 
\end{pmatrix}.
\]
By equation~\ref{i:36}, we get
\[
U^{n}=\begin{pmatrix} \vspace{0.2cm}
                   E_{n-1}  & E_{n}   &  E_{n}\\ \vspace{0.2cm}
                   E_{n-1}  & E_{n}   &  E_{n} \\
E_{n-1}+E_{n-2}  & E_{n} + E_{n-1}    & E_{n} + E_{n-1} \\
\end{pmatrix}
=
\begin{bmatrix}\vspace{0.2cm}
Q_{n-1}-E_{n}  & E_{n}  &  E_{n}\\ \vspace{0.2cm}
Q_{n-1}-E_{n}  & E_{n}  &  E_{n}    \\ \vspace{0.2cm}
2E_{n}-Q_{n-1} & Q_{n-1}&  Q_{n-1}   
\end{bmatrix}
\]
\[
= 
\begin{pmatrix}
\dfrac{1}{2\sqrt{2}}\lambda_2\lambda_3\left(\lambda_2^{n-1}-\lambda_3^{n-1} \right)  & \dfrac{1}{2\sqrt{2}}\left( \lambda_3^{n} - \lambda_2^{n}  \right)  & \dfrac{1}{2\sqrt{2}}\left( \lambda_3^{n} - \lambda_2^{n}  \right) \\ \\ 
\dfrac{1}{2\sqrt{2}}\lambda_2\lambda_3\left(\lambda_2^{n-1}-\lambda_3^{n-1} \right) & \dfrac{1}{2\sqrt{2}}\left( \lambda_3^{n} - \lambda_2^{n}  \right)  & \dfrac{1}{2\sqrt{2}}\left( \lambda_3^{n} - \lambda_2^{n}  \right) \\ \\
-\dfrac{1}{2}\lambda_2\lambda_3\left(\lambda_2^{n-1}+\lambda_3^{n-1} \right) & \dfrac{1}{2}\left( \lambda_3^{n} + \lambda_2^{n}  \right) & \dfrac{1}{2}\left( \lambda_3^{n} + \lambda_2^{n}  \right)
\end{pmatrix}.
\]
Thus, the proof is completed. 
\end{proof}

\section{Recurrences Generated by Binary $3 \times 3$ Matrices with Determinant One}
Consider the generating matrix
\[
u = \begin{pmatrix}0 & 1 & 1\\ 1 & 0 & 1\\ 1 & 1 & 1\end{pmatrix}.
\]
In this section, we present the Pell \textit{U}-matrix with determinant~1, a matrix representation of Pell numbers. We use it to compute powers \(U^n\), determinants, inverses and Cassini-like identities; to derive the characteristic roots and the Binet formula for the sequence \(b_n\); to establish identities involving the Pell sequence together with the generalized Pell sequence,  and \(r_n\); and to obtain summation formulas for Pell numbers, \(Q_n\), and \(b_n\).

\subsection{The Matrix Representation of Recurrences and Their Identities}

\begin{lem}\label{l:4} 
Let \( E_n \) be the Pell numbers defined by the recurrence
\[
E_n = 2E_{n-1} + E_{n-2}, \quad E_0 = 0,\ E_1 = 1,
\]
and let \( b_n \) be the sequence defined by
\[
b_n = b_{n-1} + 3b_{n-2} + b_{n-3}, \quad b_0 = 0,\ b_1 = 1,\ b_2 = 1.
\]
Then, for all \( n \geq 1 \),
\[
E_n = b_n + b_{n-1}.
\]
\end{lem}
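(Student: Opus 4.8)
The plan is to set $c_n := b_n + b_{n-1}$ and prove that $c_n = E_n$ by showing that the two sequences satisfy the same second-order recurrence and agree on two consecutive indices; uniqueness of solutions to a linear recurrence then forces $c_n = E_n$ for all $n$. First I would dispose of the base cases directly from the initial data, namely $c_1 = b_1 + b_0 = 1 = E_1$ and $c_2 = b_2 + b_1 = 2 = E_2$.

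The key step is to collapse the third-order recurrence for $b_n$ into the Pell recurrence for $c_n$. Using $b_n = b_{n-1} + 3b_{n-2} + b_{n-3}$, valid for $n \ge 3$, I would compute
\[
c_n = b_n + b_{n-1} = 2b_{n-1} + 3b_{n-2} + b_{n-3},
\]
and independently expand
\[
2c_{n-1} + c_{n-2} = 2(b_{n-1}+b_{n-2}) + (b_{n-2}+b_{n-3}) = 2b_{n-1} + 3b_{n-2} + b_{n-3}.
\]
Comparing the two right-hand sides gives $c_n = 2c_{n-1} + c_{n-2}$ for every $n \ge 3$. Since $E_n$ obeys the identical recurrence and the base values match, a straightforward induction yields $c_n = E_n$ for all $n \ge 1$.

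The argument is almost entirely routine, so there is no serious obstacle; the only point demanding care is the index bookkeeping. The derived recurrence $c_n = 2c_{n-1} + c_{n-2}$ is available only for $n \ge 3$ (because the $b$-recurrence first applies at $n=3$), which is precisely why both base cases $n=1$ and $n=2$ are indispensable. Conceptually, the reason a third-order recurrence for $b_n$ should reduce to a second-order one for $c_n$ is visible in the factorization of the characteristic polynomial $x^3 - x^2 - 3x - 1 = (x+1)(x^2 - 2x - 1)$: the quadratic factor $x^2 - 2x - 1$ is exactly the Pell characteristic polynomial with roots $1 \pm \sqrt{2}$, while the extra root $-1$ contributes a term proportional to $(-1)^n$ in the closed form of $b_n$. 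Forming $b_n + b_{n-1}$ annihilates this alternating component, since $(-1)^n + (-1)^{n-1} = 0$, leaving a pure Pell sequence. One could instead run this Binet-style computation as an alternative proof, but the recurrence-matching route above is shorter and sidesteps computing any closed forms.
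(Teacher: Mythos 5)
Your proposal is correct and is essentially the same argument as the paper's: both proofs reduce the claim to the identity $2b_{n-1}+3b_{n-2}+b_{n-3}=b_n+b_{n-1}$ via the two recurrences and conclude by induction, with your version merely repackaging the induction as ``two sequences satisfying the same second-order recurrence with matching initial values coincide.'' If anything, your handling is slightly more careful, since the inductive step genuinely uses the hypothesis at two consecutive indices and you verify both base cases $n=1$ and $n=2$, whereas the paper checks only $n=1$.
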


\begin{proof}
We will use the principle of mathematical induction (PMI). When $n = 1$,
\[
E_1 = b_1 + b_{0}=1+0=1.
\] 
so the result is true. We assume it is true for any positive integer $n=k$:
\[
E_k = b_k + b_{k-1}.
\] 
Now, we show that it is true for $n = k + 1$. Using the recurrence relation for \( E_n \), we compute:
\begin{align*}
E_{k+1} &= 2E_k + E_{k-1} \\
        &= 2(b_k + b_{k-1}) + (b_{k-1} + b_{k-2}) \quad \text{(by the induction hypothesis)} \\
        &= 2b_k + 2b_{k-1} + b_{k-1} + b_{k-2} \\
        &= b_k + (b_k + 3b_{k-1} + b_{k-2}) \\
        &= b_k + b_{k+1} \quad \text{(by the recurrence for $b_{n}$)} \\
        &= b_{k+1}  + b_{k}               
\end{align*}
Thus, the identity holds for \( n = k+1 \), and by induction, it holds for all positive integers \( n \).
\end{proof}

\begin{lem}\label{l:5} 
Let $u$ be the symmetric matrix
\[
u = \begin{pmatrix} 
                    0   &  1   &  1 \\
                    1   &  0   &  1 \\
                    1   &  1   &  1\\
\end{pmatrix}.
\]  
Then, 

\[u^{n} =  \begin{pmatrix} \vspace{0.2cm}
                   E_{n}+E_{n-1}- b_{n} & b_{n} & E_{n} \\ \vspace{0.2cm}
                   b_{n} & E_{n}+E_{n-1}-b_{n} & E_{n} \\
                   E_{n} & E_{n} & E_{n}+E_{n-1} \\
\end{pmatrix}  \]   
where \( n\in \mathbb{Z}^{+} \) and  \( E_{n} \) denotes the Pell numbers, defined by the recurrence relation  
\[
E_n = 2E_{n-1} + E_{n-2},
\]
with initial conditions \( E_0 = 0 \) and \( E_1 = 1 \). Similarly, the numbers $b_{n}$ satisfy 

\[b_{n} = b_{n-1} + 3b_{n-2} + b_{n-3},\] with  \(b_{0} =0\), \( b_{1} =1\), \( b_{2} =1 .\) 

\end{lem}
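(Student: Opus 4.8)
The plan is to prove the formula by induction on $n$, in exactly the style of Lemmas~\ref{l:1} and~\ref{l:2}, with the identity $E_n = b_n + b_{n-1}$ from Lemma~\ref{l:4} supplying the one nonroutine ingredient. First I would record that the claimed matrix is symmetric and has the economical block shape
\[
u^n = \begin{pmatrix} a_n & b_n & E_n \\ b_n & a_n & E_n \\ E_n & E_n & d_n \end{pmatrix}, \qquad a_n = E_n + E_{n-1} - b_n, \quad d_n = E_n + E_{n-1},
\]
so that only the three scalars $a_n$, $b_n$, $E_n$ (and the derived $d_n$) need to be tracked. For the base case $n = 1$, substituting $E_0 = 0$, $E_1 = 1$, $b_1 = 1$ gives $a_1 = 0$ and $d_1 = 1$, which recovers $u$ itself, so the statement holds.

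For the inductive step I would assume the formula at $n = k$ and compute $u^{k+1} = u^k u$ directly, multiplying the block matrix above by $u$. Symmetry is automatically preserved, and the entries of the last row and column collapse at once through the Pell recurrence $E_{k+1} = 2E_k + E_{k-1}$: the $(1,3)$ and $(2,3)$ entries equal $a_k + b_k + E_k = 2E_k + E_{k-1} = E_{k+1}$, the $(3,1)$ and $(3,2)$ entries equal $E_k + d_k = E_{k+1}$, and the $(3,3)$ entry equals $2E_k + d_k = 3E_k + E_{k-1} = E_{k+1} + E_k = d_{k+1}$. None of these four checks uses anything beyond the Pell recurrence.

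The substantive part, and the step I expect to be the main obstacle, is matching the entries that carry the auxiliary sequence $b_n$. The product yields $(1,1)$ and $(2,2)$ entries equal to $b_k + E_k$, and $(1,2)$, $(2,1)$ entries equal to $a_k + E_k = E_{k+1} - b_k$. To close the induction I must identify these with the targets $a_{k+1} = E_{k+1} + E_k - b_{k+1}$ and $b_{k+1}$, respectively; both reconciliations are equivalent to the single relation $E_{k+1} = b_{k+1} + b_k$, which is precisely Lemma~\ref{l:4} read at index $k+1$. It is worth stressing that the defining recurrence $b_n = b_{n-1} + 3b_{n-2} + b_{n-3}$ is never used directly in the matrix product; it enters only through Lemma~\ref{l:4}, so the whole argument hinges on having that identity in hand. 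Once it is inserted, the $b$-dependent entries align with the target matrix and the induction is complete. As a consistency check one may also diagonalize $u$: its characteristic polynomial factors as $(\lambda+1)(\lambda^2 - 2\lambda - 1)$, so the eigenvalues are $-1$ together with the Pell units $1 \pm \sqrt{2}$, which accounts for the simultaneous appearance of the second-order sequence $E_n$ and the third-order sequence $b_n$ among the entries.
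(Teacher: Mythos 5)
Your proof is correct and follows essentially the same route as the paper's: induction on $n$, right-multiplication of the inductive matrix by $u$, the Pell recurrence $E_{k+1}=2E_k+E_{k-1}$ to settle the last row, column, and corner entry, and the identity $E_{k+1}=b_{k+1}+b_k$ from Lemma~\ref{l:4} to reconcile the two $b$-dependent entries. The only differences are cosmetic — the shorthand $a_n$, $d_n$, the explicit remark that the cubic recurrence for $b_n$ enters only through Lemma~\ref{l:4}, and the eigenvalue consistency check.
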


\begin{proof}

by the principle of mathematical induction, When $n = 1$, 

\[
u^1 = \begin{pmatrix} 
                    0   &  1   &  1 \\
                    1   &  0   &  1 \\
                    1   &  1   &  1\\
\end{pmatrix},
\] 
so the result is true. Now, assume it is true for an arbitrary positive integer $n$: 

\[
u^n u = 
\begin{pmatrix} \vspace{0.2cm}
    E_n + E_{n-1} - b_n & b_n & E_n \\ \vspace{0.2cm}
    b_n & E_n + E_{n-1} - b_n & E_n \\
    E_n & E_n & E_n + E_{n-1}
\end{pmatrix}\]
Then we compute the product \( u^n u \) using the given matrices:

\[
u^n u = 
\begin{pmatrix} \vspace{0.2cm}
    E_n + E_{n-1} - b_n & b_n & E_n \\ \vspace{0.2cm}
    b_n & E_n + E_{n-1} - b_n & E_n \\
    E_n & E_n & E_n + E_{n-1}
\end{pmatrix}
\begin{pmatrix}
    0 & 1 & 1 \\
    1 & 0 & 1 \\
    1 & 1 & 1
\end{pmatrix}
\]
Carrying out the matrix multiplication, we obtain:

\[
u^n u = 
\begin{pmatrix} \vspace{0.2cm}
    b_n + E_n & 2E_n + E_{n-1} - b_n & 2E_n + E_{n-1} \\ \vspace{0.2cm}
    2E_n + E_{n-1} - b_n & b_n + E_n & 2E_n + E_{n-1} \\
    2E_n + E_{n-1} & 2E_n + E_{n-1} & 3E_n + E_{n-1}
\end{pmatrix}
\]
Recall that the sequence \( (E_n) \) satisfies the recurrence relation
\[
E_{n+1} = 2E_n + E_{n-1},
\]
from which it follows that
\[
E_{n+1} + E_n = 3E_n + E_{n-1}.
\]
Substituting into the matrix, we get:
\[
u^n u = 
\begin{pmatrix} \vspace{0.2cm}
    b_n + E_n & E_{n+1} - b_n & E_{n+1} \\ \vspace{0.2cm}
    E_{n+1} - b_n & b_n + E_n & E_{n+1} \\
    E_{n+1} & E_{n+1} & E_{n+1} + E_n
\end{pmatrix}
\]
Now, we use the identity given in Lemma~\ref{l:4}.
\[
E_{n+1} = b_{n+1} + b_n,
\]
we can write
\[
E_{n+1} + E_n - b_{n+1} = b_n + E_n.
\]
Hence, the matrix becomes:

\[
u^n u = 
\begin{pmatrix} \vspace{0.2cm}
    E_{n+1} + E_n - b_{n+1} & b_{n+1} & E_{n+1} \\ \vspace{0.2cm}
    b_{n+1} & E_{n+1} + E_n - b_{n+1} & E_{n+1} \\
    E_{n+1} & E_{n+1} & E_{n+1} + E_n
\end{pmatrix} = u^{n+1}
\]
This completes the proof.
\end{proof}

\begin{lem}\label{l:6} 
Let \( E_n \) be the Pell numbers defined by the recurrence
\[
E_n = 2E_{n-1} + E_{n-2}, \quad E_0 = 0,\ E_1 = 1,
\]
Then, for all \( n \geq 1 \),
\[
3E_{n-1}+ E_n = E_{n+1} - E_{n-2}.
\]
\end{lem}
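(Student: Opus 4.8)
The plan is to prove this purely algebraically by reducing both sides to a common normal form expressed in terms of two consecutive earlier terms, $E_{n-1}$ and $E_{n-2}$; no induction is needed, since the identity turns out to be a direct two-fold application of the defining recurrence. I would first handle the left-hand side: substituting $E_n = 2E_{n-1} + E_{n-2}$ gives
\[
3E_{n-1} + E_n = 3E_{n-1} + 2E_{n-1} + E_{n-2} = 5E_{n-1} + E_{n-2}.
\]
For the right-hand side I would expand $E_{n+1}$ twice. Using $E_{n+1} = 2E_n + E_{n-1}$ and then $E_n = 2E_{n-1} + E_{n-2}$ yields
\[
E_{n+1} = 2(2E_{n-1} + E_{n-2}) + E_{n-1} = 5E_{n-1} + 2E_{n-2},
\]
so that
\[
E_{n+1} - E_{n-2} = 5E_{n-1} + 2E_{n-2} - E_{n-2} = 5E_{n-1} + E_{n-2}.
\]
Since both sides collapse to $5E_{n-1} + E_{n-2}$, the identity follows. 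Equivalently, one can observe that the claim is literally a rearrangement of the recurrence: $3E_{n-1} + E_n = E_{n+1} - E_{n-2}$ is equivalent, after substituting $E_{n+1} = 2E_n + E_{n-1}$, to $E_n = 2E_{n-1} + E_{n-2}$, which is the definition of the Pell sequence.

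The only point requiring care is the range of validity. The manipulation above uses the recurrence for $E_n$, which is valid for $n \geq 2$, so it directly establishes the identity for all $n \geq 2$. The boundary case $n = 1$ must be checked separately, and here the term $E_{n-2} = E_{-1}$ appears; I would adopt the standard backward extension of the recurrence, namely $E_{-1} = 1$ (obtained from $E_1 = 2E_0 + E_{-1}$ with $E_0 = 0$, $E_1 = 1$). With this convention, $3E_0 + E_1 = 1$ and $E_2 - E_{-1} = 2 - 1 = 1$, so the identity also holds at $n = 1$. There is no genuine obstacle in this proof; the main thing to get right is simply stating the index convention for $E_{-1}$ so that the single boundary case $n=1$ is covered cleanly alongside the generic argument for $n \geq 2$.
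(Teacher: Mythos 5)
Your proof is correct and follows essentially the same route as the paper's: a direct double application of the recurrence with no induction (the paper rewrites $3E_{n-1}+E_n = 2E_{n-1}+(E_n+E_{n-1})$ and substitutes $2E_{n-1}=E_n-E_{n-2}$, while you normalize both sides to $5E_{n-1}+E_{n-2}$, which is the same computation organized differently). Your explicit handling of the boundary case $n=1$ via the backward extension $E_{-1}=1$ is a small point of rigor the paper glosses over, but it does not change the argument.
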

\begin{proof}
\begin{align*}
    3E_{n-1} + E_n 
    &= 2E_{n-1} + (E_n + E_{n-1}) \\
    &= (E_n - E_{n-2}) + (E_n + E_{n-1})  \quad \text{(by the recurrence for $E_{n}$)} \\
    &= (2E_n + E_{n-1}) - E_{n-2} \\
    &= E_{n+1} - E_{n-2}   \quad \text{(by the recurrence for $E_{n}$)}.
\end{align*}
This concludes the proof.
\end{proof}

\begin{rem}
We recall the following identity, sometimes referred to as Simpson’s formula for Pell numbers (see Horadam \cite{Horadam1961}, Identity 30) :

\begin{equation}\label{eq:simpson}
E_n E_{n-2} - E_{n-1}^2 = (-1)^{n-1}.
\end{equation}

This identity will be used in the final step of the proof below. 
Although this result was first proved in \cite{KilicTasci2005}, 
we present here an alternative proof, which highlights the 
connection with Simpson's formula. 
\end{rem}

\begin{lem}[see E. Kilic and D. Tasci \cite{KilicTasci2005}]\label{l:7} 
Let \( E_n \) be the Pell numbers defined by the recurrence
\[
E_n = 2E_{n-1} + E_{n-2}, \quad E_0 = 0,\ E_1 = 1,
\]
Then, for all \( n \geq 1 \),
\[
E_{n}^{2}- E_{n-1}^{2} - 2E_{n}E_{n-1}=(-1)^{n-1}.
\]
\end{lem}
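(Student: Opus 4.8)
The plan is to avoid a fresh induction and instead obtain the identity as a one-line algebraic consequence of Simpson's formula~\eqref{eq:simpson}, exactly as the preceding Remark anticipates. The bridge between the two statements is the defining recurrence itself: rearranging $E_n = 2E_{n-1} + E_{n-2}$ gives $E_{n-2} = E_n - 2E_{n-1}$, so the mixed product $E_n E_{n-2}$ appearing in Simpson's formula can be rewritten purely in terms of $E_n$ and $E_{n-1}$. This is the only idea needed; everything else is bookkeeping.

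Concretely, I would start from the Remark's identity $E_n E_{n-2} - E_{n-1}^2 = (-1)^{n-1}$ and substitute $E_{n-2} = E_n - 2E_{n-1}$ into the first term:
\[
E_n E_{n-2} - E_{n-1}^2 = E_n\bigl(E_n - 2E_{n-1}\bigr) - E_{n-1}^2 = E_n^2 - 2E_n E_{n-1} - E_{n-1}^2 .
\]
Since the left-hand side equals $(-1)^{n-1}$, this is precisely the claimed identity $E_n^2 - E_{n-1}^2 - 2E_n E_{n-1} = (-1)^{n-1}$, so the proof terminates immediately after the substitution.

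There is essentially no computational obstacle here; the entire content is the observation that the claimed quadratic form is just Simpson's formula in disguise once the recurrence is applied. The only point that requires a little care is the index range: Simpson's formula as written involves $E_{n-2}$, so the borderline case $n=1$ must be handled. I would either extend the Pell numbers to $E_{-1}=1$ (which is forced by the recurrence, since $E_1 = 2E_0 + E_{-1}$) so that the substitution remains valid, or, more cleanly, verify the identity directly at $n=1$, where $E_1^2 - E_0^2 - 2E_1 E_0 = 1 = (-1)^0$, and invoke the substitution argument for all $n \geq 2$. Either route secures the statement for every $n \geq 1$, and the approach has the added virtue of making the advertised connection with Simpson's formula completely transparent.
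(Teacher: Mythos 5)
Your proposal is correct and is essentially the paper's own argument run in reverse: the paper starts from $E_n^2 - E_{n-1}^2 - 2E_nE_{n-1}$, factors, and applies the recurrence to reach $E_nE_{n-2} - E_{n-1}^2$, while you substitute $E_{n-2} = E_n - 2E_{n-1}$ into Simpson's formula to reach the same conclusion. Your explicit handling of the $n=1$ boundary case is a small point of extra care that the paper omits.
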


\begin{proof}
\begin{align*}
E_{n}^{2} - E_{n-1}^{2} - 2E_{n}E_{n-1}
    &= (E_{n}^{2} - E_{n}E_{n-1}) - (E_{n-1}^{2} + E_{n}E_{n-1}) \\
    &= E_{n}(E_{n} - E_{n-1}) - E_{n-1}(E_{n-1} + E_{n}) \\
    &= E_{n}(E_{n-1} + E_{n-2}) - E_{n-1}(E_{n-1} + E_{n})\\ 
    &\quad \text{(by the recurrence for $E_n$)} \\
    &= E_{n}E_{n-2} - E_{n-1}^{2} \\
    &\quad \text{(by Simpson’s identity for Pell numbers, see \eqref{eq:simpson})}\\
    &= (-1)^{n-1}.
\end{align*}
This concludes the proof.
\end{proof}

\begin{rem}
Although identities of the form $Q_n = \alpha P_n + \beta P_{n+1}$ 
for sequences $Q_n$ satisfying the Pell recurrence are classical  (see, \cite{TrojnarSpelina2019}), the specific choice of initial values $Q_0=1, Q_1=3$ 
yields the identity
\[
Q_n = P_n + P_{n+1}.
\] 
For completeness, we present the short derivation below.
\end{rem}

\begin{lem}\label{l:13} 
Let \( E_n \) be the Pell numbers defined by the recurrence
\[
E_n = 2E_{n-1} + E_{n-2}, \quad E_0 = 0,\ E_1 = 1,
\]
and let \( Q_n \) be the generalized Pell sequence defined by
\[
 Q_n = 2 \, Q_{n-1} + Q_{n-2} \, \quad Q_0 = 1 ,\,  Q_1 = 3 .
\]

Then, for all \( n \geq 1 \),
\[
 Q_n = E_n + E_{n+1}.
\]
\end{lem}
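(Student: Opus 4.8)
The plan is to exploit the structural fact that both $Q_n$ and the proposed right-hand side obey the \emph{same} second-order linear recurrence $x_n = 2x_{n-1} + x_{n-2}$, so that agreement on two consecutive indices forces agreement for all $n$. Concretely, I would introduce the auxiliary sequence $R_n := E_n + E_{n+1}$ and show that $R_n = Q_n$ for every $n \geq 0$; this is marginally stronger than the stated range $n \geq 1$ and costs nothing.

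First I would pin down the base cases. From $E_0 = 0$, $E_1 = 1$, and $E_2 = 2E_1 + E_0 = 2$, one computes $R_0 = E_0 + E_1 = 1$ and $R_1 = E_1 + E_2 = 3$, which coincide with $Q_0 = 1$ and $Q_1 = 3$. Because the recurrence is second-order, it is essential to verify two consecutive initial values rather than one, and this is the point where I would be careful.

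The heart of the argument is to check that $R_n$ itself satisfies the Pell recurrence $R_n = 2R_{n-1} + R_{n-2}$. Expanding $2R_{n-1} + R_{n-2} = 2(E_{n-1} + E_n) + (E_{n-2} + E_{n-1})$ and regrouping the terms as $(2E_{n-1} + E_{n-2}) + (2E_n + E_{n-1})$, each parenthesized block collapses under the Pell recurrence to $E_n$ and $E_{n+1}$ respectively, so the sum is $E_n + E_{n+1} = R_n$. Hence $R_n$ and $Q_n$ are two solutions of the same linear recurrence with identical initial data $x_0 = 1$, $x_1 = 3$, and by the uniqueness of such solutions (equivalently, a one-line induction carrying the hypothesis across one step) we conclude $R_n = Q_n$ for all $n$, which is exactly the claimed identity.

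The argument is essentially routine, so the only genuine subtlety is bookkeeping: one must secure two consecutive base cases, and must split $2R_{n-1} + R_{n-2}$ into precisely the two groups that reassemble into $E_n$ and $E_{n+1}$ rather than some other grouping. An equivalent alternative would be to run a direct induction on the identity $Q_n = E_n + E_{n+1}$ itself, again with two base cases and one inductive step invoking the recurrences for $Q$ and $E$ simultaneously. I would nonetheless prefer the ``same recurrence, same initial conditions'' formulation, since it exposes the structural reason behind the identity and sidesteps threading the induction hypothesis through a longer algebraic chain.
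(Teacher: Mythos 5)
Your proof is correct and is essentially the same as the paper's: the key step, regrouping $2(E_{n-1}+E_n)+(E_{n-2}+E_{n-1})$ as $(2E_{n-1}+E_{n-2})+(2E_n+E_{n-1})$, is identical, and your ``same recurrence, same initial data'' framing is just a repackaging of the paper's induction. If anything you are more careful than the paper, which verifies only the single base case $n=1$ even though its inductive step substitutes the hypothesis at both $k$ and $k-1$ and therefore tacitly needs two consecutive base cases, exactly the point you flag.
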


\begin{proof}
We will use the principle of mathematical induction (PMI). When $n = 1$,
\[
E_1 = E_1 + E_{2}=1+2=3.
\] 
so the result is true. We assume it is true for any positive integer $n=k$:
\[
Q_k = E_k + E_{k+1}.
\] 
Now, we show that it is true for $n = k + 1$. Using the recurrence relation for \( E_n \), we compute:
\begin{align*}
Q_{k+1} &= 2Q_k + Q_{k-1} \\
        &= 2(E_k + E_{k+1}) + (E_{k-1} + E_{k}) \quad \text{(by the induction hypothesis)} \\
        &= (2E_k +  E_{k-1}) + ( 2E_{k+1}+E_{k} ) \\
        &= E_{k+1} +  E_{k+2} \quad \text{(by the recurrence for $E_{n}$)} \\              
\end{align*}
Thus, the identity holds for \( n = k+1 \), and by induction, it holds for all positive integers \( n \).
\end{proof}

\begin{cor}
Let \( u \) be the Pell matrix defined by
\[
u = \begin{pmatrix} 
                    0   &  1   &  1 \\
                    1   &  0   &  1 \\
                    1   &  1   &  1\\
\end{pmatrix}.
\] Then, for every \( n \in \mathbb{N} \), we have that
\[
\det(u^n) =(Q_{n-1}-2b_{n})\,(-1)^{n} = 1.
\]
\end{cor}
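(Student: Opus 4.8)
The plan is to establish the two equalities in the statement by two independent computations and then string them together. The value $1$ on the right is the easy half: since this section concerns the determinant-one matrix, I would first verify by cofactor expansion along the first row that $\det(u)=1$, and then invoke multiplicativity of the determinant to conclude $\det(u^n)=(\det u)^n=1$ for every $n\in\mathbb{N}$. This requires neither induction nor any knowledge of the explicit entries of $u^n$.

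The substantive half is the middle expression $(Q_{n-1}-2b_n)(-1)^n$, which I would obtain by computing the determinant of the explicit matrix furnished by Lemma~\ref{l:5}. Writing $a=E_n+E_{n-1}-b_n$, $c=b_n$, $d=E_n$, and $e=E_n+E_{n-1}$, the matrix $u^n$ takes the symmetric form with rows $(a,c,d)$, $(c,a,d)$, $(d,d,e)$. The crucial structural observation is that $a+c=E_n+E_{n-1}=e$, which by identity~\eqref{i:4} equals $Q_{n-1}$; that is, the bottom-right entry already coincides with the common column sum of the top-left $2\times2$ block. Expanding along the first row and collecting terms, I expect the determinant to factor cleanly as $(a-c)\bigl(e(a+c)-2d^2\bigr)$, and substituting $a+c=e$ collapses the bracket to give $(a-c)(e^2-2d^2)$.

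It then remains to identify the two factors. The first is immediate: $a-c=(E_n+E_{n-1}-b_n)-b_n=Q_{n-1}-2b_n$. For the second I would expand $e^2-2d^2=(E_n+E_{n-1})^2-2E_n^2$ and rearrange it as $-(E_n^2-2E_nE_{n-1}-E_{n-1}^2)$, at which point Lemma~\ref{l:7} supplies $E_n^2-E_{n-1}^2-2E_nE_{n-1}=(-1)^{n-1}$, so that $e^2-2d^2=(-1)^n$. Combining the factors yields $\det(u^n)=(Q_{n-1}-2b_n)(-1)^n$, and together with the determinant computation of the previous paragraph this closes the chain $\det(u^n)=(Q_{n-1}-2b_n)(-1)^n=1$.

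The main obstacle, and really the only step that is not mechanical, is recognizing and exploiting the factorization: without first using $a+c=e$ to collapse the bracket, the expanded determinant is a cumbersome quadratic in $a,c,d,e$ whose reduction to the sign $(-1)^n$ is not transparent. Once the clean form $(a-c)(e^2-2d^2)$ is in hand, Lemma~\ref{l:7} is the natural bridge producing the Cassini-type sign, and the remainder is bookkeeping.
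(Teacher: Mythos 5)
Your proposal is correct and follows essentially the same route as the paper: both halves proceed by multiplicativity of the determinant for the value $1$, and by computing the determinant of the explicit form of $u^n$ from Lemma~\ref{l:5}, factoring it as $(Q_{n-1}-2b_n)$ times the Cassini-type factor, and invoking Lemma~\ref{l:7} together with $Q_{n-1}=E_n+E_{n-1}$. The only difference is cosmetic: you reach the factorization $(a-c)\bigl(e(a+c)-2d^2\bigr)$ directly from the row expansion, whereas the paper expands the determinant into a full cubic in $E_n,E_{n-1},b_n$ before factoring it into the same two factors.
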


\begin{proof}
It is easy to see that
\[
\det(u) = 1.
\]
Then, it can be written
\begin{align*}
\det(u^n) &= \det(F) \cdot \det(F) \cdot \dots \cdot \det(F) \\
          &= (1)^n=1.
\end{align*}
If \( x = E_{n} \), \( y = E_{n-1} \), and \( z = b_{n}\), then the determinant of the matrix \( u^n \) given in Lemma~\ref{l:5} is

\begin{align*}
\left|
\begin{array}{ccc}
x + y - z & z & x \\
z & x + y - z & x \\
x & x & x + y
\end{array}
\right|
&= -x^3 + y^3 + 3xy^2 + x^2y + 2x^2z - 2y^2z - 4xyz\\
&= -(x+y-2z)\,(x^2 - 2xy - y^2)\\
&= -(E_{n}+E_{n-1}-2b_{n})\,(E_{n}^2 - 2E_{n}E_{n-1} - E_{n-1}^2) \\
&\quad \text{(by the identities for $Q_{n-1}$; see Lemmas~\ref{l:13},~\ref{l:7})} \\
&= -(Q_{n-1}-2b_{n})\,(-1)^{n-1}  \\
&= (Q_{n-1}-2b_{n})\,(-1)^n.
\end{align*}
Thus, \[(Q_{n-1}-2b_{n})(-1)^{n}=1 .\] for all $n\geq 1.$ 

\end{proof}

\begin{cor}
For $n\geq 1$ and  $m\geq 1$, we have the following identity:
\begin{align}
E_{m+n}+E_{m+n-1}- b_{m+n}&=(E_m + E_{m-1} - b_m)(E_n + E_{n-1} - b_n) + b_m b_n + E_{m} E_{n} ,\label{id:7}\\
b_{m+n} &= b_n(E_m + E_{m-1}-b_m) + b_m(E_n + E_{n-1} - b_n) + E_{m} E_{n},\label{id:8} \\
E_{m+n} &= E_n(E_m + E_{m-1}) + E_m(E_n + E_{n-1}),\label{id:9}\\
E_{m+n}+E_{m+n-1}  &= (E_m + E_{m-1})(E_n + E_{n-1}) + 2E_{m} E_n .\label{id:10}
\end{align}
\end{cor}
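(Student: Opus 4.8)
The plan is to exploit the semigroup relation $u^{m+n}=u^mu^n$ in exactly the same spirit as the proof of the preceding corollary, reading off each of the four identities from a single entry of the product matrix. To keep the bookkeeping transparent, I would first abbreviate the matrix of Lemma~\ref{l:5} by setting $\alpha_n=E_n+E_{n-1}-b_n$ and $\beta_n=E_n+E_{n-1}$, so that
\[
u^n=\begin{pmatrix}\alpha_n & b_n & E_n\\ b_n & \alpha_n & E_n\\ E_n & E_n & \beta_n\end{pmatrix},
\]
and likewise for $u^m$ and for $u^{m+n}$. The highly symmetric block form---the upper-left $2\times 2$ block is a symmetric circulant $\left(\begin{smallmatrix}\alpha&b\\ b&\alpha\end{smallmatrix}\right)$, and the last row and column are constant off the diagonal---means that only four genuinely distinct values appear among the entries of the product, and these four values are precisely what produce the four stated identities.

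Next I would compute the relevant entries of $u^mu^n$ directly. The $(1,1)$ entry equals $\alpha_m\alpha_n+b_mb_n+E_mE_n$; equating it with the $(1,1)$ entry $\alpha_{m+n}=E_{m+n}+E_{m+n-1}-b_{m+n}$ of $u^{m+n}$ gives identity~\eqref{id:7}. The $(1,2)$ entry equals $\alpha_mb_n+b_m\alpha_n+E_mE_n$, and matching it with $b_{m+n}$ yields~\eqref{id:8}. For the $(1,3)$ entry one finds $(\alpha_m+b_m)E_n+E_m\beta_n$, where the small simplification $\alpha_m+b_m=E_m+E_{m-1}$ causes $b_m$ to cancel; matching with $E_{m+n}$ then gives~\eqref{id:9}. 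Finally the $(3,3)$ entry equals $2E_mE_n+\beta_m\beta_n$, and equating with $\beta_{m+n}=E_{m+n}+E_{m+n-1}$ produces~\eqref{id:10}.

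There is no real conceptual obstacle here: the argument is driven entirely by Lemma~\ref{l:5}, so the content reduces to one careful $3\times 3$ matrix multiplication. The only points requiring a moment's attention are the single cancellation $\alpha_m+b_m=E_m+E_{m-1}$ used for~\eqref{id:9}, and the observation that the four chosen entries suffice---one should note that the remaining entries of the product (for instance the $(2,3)$ and $(3,1)$ entries) reproduce the same relations by symmetry and therefore contribute no new information, which is automatic since $u^{m+n}$ already carries the symmetric shape dictated by Lemma~\ref{l:5}. I would present the computation compactly by displaying $u^mu^n$ in the abbreviated notation above and then equating it entry by entry with the expression for $u^{m+n}$.
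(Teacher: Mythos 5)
Your proposal is correct and follows exactly the paper's own argument: expand $u^{m+n}=u^m u^n$ using the matrix form from Lemma~\ref{l:5} and equate the $(1,1)$, $(1,2)$, $(1,3)$, and $(3,3)$ entries with those of $u^{m+n}$. The abbreviations $\alpha_n$, $\beta_n$ and the explicit remark that $\alpha_m+b_m=E_m+E_{m-1}$ are only notational conveniences; the computation and conclusions coincide with the paper's proof.
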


\begin{proof}

For \( m, n \geq 1 \), we know that \( U^{m+n} = U^m U^n \). Since we have defined \( U^n \) as follows, the same expression in matrix form is:

\[
U^m = \begin{pmatrix} \vspace{0.2cm}
                   E_{m}+E_{m-1}- b_{m} & b_{m} & E_{m} \\ \vspace{0.2cm}
                   b_{m} & E_{m}+E_{m-1}-b_{m} & E_{m} \\
                   E_{m} & E_{m} & E_{m}+E_{m-1} \\
\end{pmatrix}, 
\] 

\[
U^{n} =  \begin{pmatrix} \vspace{0.2cm}
                   E_{n}+E_{n-1}- b_{n} & b_{n} & E_{n} \\ \vspace{0.2cm}
                   b_{n} & E_{n}+E_{n-1}-b_{n} & E_{n} \\
                   E_{n} & E_{n} & E_{n}+E_{n-1} \\
\end{pmatrix}  \]   
The product \( U^m U^n \) is given by the following symmetric \( 3 \times 3 \) matrix:
\[
U^m U^n =
\begin{bmatrix}
A & B & C \\
B & A & C \\
C & C & D
\end{bmatrix}
\]
where:

\[
\begin{aligned}
A &= (E_m + E_{m-1} - b_m)(E_n + E_{n-1} - b_n) + b_m b_n + E_{m} E_{n} \\
B &= b_n(E_m + E_{m-1}-b_m) + b_m(E_n + E_{n-1} - b_n) + E_{m} E_{n}, \\
C &= E_n(E_m + E_{m-1}) + E_m(E_n + E_{n-1}), \\
D &= (E_m + E_{m-1})(E_n + E_{n-1}) + 2E_{m} E_n.
\end{aligned}
\] 
On the other hand, we have:

\[
U^{m+n} = 
 \begin{pmatrix} \vspace{0.2cm}
                   E_{m+n}+E_{m+n-1}- b_{m+n} & b_{m+n} & E_{m+n} \\ \vspace{0.2cm}
                   b_{m+n} & E_{m+n}+E_{m+n-1}-b_{m+n} & E_{m+n} \\
                   E_{m+n} & E_{m+n} & E_{m+n}+E_{m+n-1} \\
\end{pmatrix}, 
\]
Equating the two matrices obtained via matrix multiplication yields the identities stated in the corollary.
\end{proof}

\begin{cor}

For $n\geq 1$ and  $m\geq 1$, we have the following identity:

\begin{align}
Q_{m+n-1}&= Q_{n-1}Q_{m-1}+2\,E_{n}E_{m},\label{id:11} \\
E_{m+n}  &= E_{m}Q_{n-1}+\,E_{n}Q_{m-1},\label{id:12} \\
b_{m+n}  &= b_{n}Q_{m-1}+b_{m}Q_{n-1}-2\,b_{n}b_{m}+\,E_{n}E_{m}\label{id:13} .
\end{align}

\end{cor}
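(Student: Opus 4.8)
The plan is to derive these three identities not by a fresh matrix multiplication but by rewriting the identities~\eqref{id:8}, \eqref{id:9}, \eqref{id:10} of the preceding corollary in terms of the generalized Pell sequence $Q$. The only extra ingredient is the relation from Lemma~\ref{l:13}, which after shifting the index reads
\[
Q_{k-1} = E_k + E_{k-1} \qquad \text{for every integer } k,
\]
so that in particular $Q_{m-1}=E_m+E_{m-1}$, $\;Q_{n-1}=E_n+E_{n-1}$, and $Q_{m+n-1}=E_{m+n}+E_{m+n-1}$. Thus each of \eqref{id:11}--\eqref{id:13} is simply its counterpart among \eqref{id:8}--\eqref{id:10} read through this dictionary, and no new structural work is required.

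For~\eqref{id:11} I would start from~\eqref{id:10}, namely $E_{m+n}+E_{m+n-1} = (E_m + E_{m-1})(E_n + E_{n-1}) + 2E_{m} E_n$. The left-hand side is exactly $Q_{m+n-1}$, while each factor on the right is a $Q$-term by the displayed relation; substituting gives $Q_{m+n-1}=Q_{m-1}Q_{n-1}+2E_mE_n$. Identity~\eqref{id:12} is even more direct: in~\eqref{id:9} the bracketed sums $E_m+E_{m-1}$ and $E_n+E_{n-1}$ are replaced by $Q_{m-1}$ and $Q_{n-1}$, yielding $E_{m+n}=E_nQ_{m-1}+E_mQ_{n-1}$ with no further manipulation.

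The only identity requiring a short calculation is~\eqref{id:13}. Here I would begin with~\eqref{id:8},
\[
b_{m+n} = b_n(E_m + E_{m-1}-b_m) + b_m(E_n + E_{n-1} - b_n) + E_m E_n,
\]
expand the two products, and replace $E_m+E_{m-1}$ by $Q_{m-1}$ and $E_n+E_{n-1}$ by $Q_{n-1}$. The two cross terms $-b_nb_m$ and $-b_mb_n$ then coalesce into $-2b_nb_m$, producing $b_{m+n}=b_nQ_{m-1}+b_mQ_{n-1}-2b_nb_m+E_mE_n$. The main (and essentially only) obstacle is this bookkeeping step—tracking the sign and observing that the two $b$-products are equal—which is entirely routine.
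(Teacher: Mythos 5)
Your proposal is correct and follows essentially the same route as the paper: the paper likewise proves \eqref{id:11}--\eqref{id:13} by substituting the relation $Q_{k-1}=E_k+E_{k-1}$ from Lemma~\ref{l:13} into identities \eqref{id:10}, \eqref{id:9}, and \eqref{id:8} respectively, with the same brief expansion step for the $b$-identity. No gaps.
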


\begin{proof} 
Let \( Q_n \) denote the generalized Pell sequence, defined by
\[
Q_n = 2Q_{n-1} + Q_{n-2}, \quad Q_0 = 1,\ Q_1 = 3,
\]
and let \( E_n \) denote the Pell numbers, defined by
\[
E_n = 2E_{n-1} + E_{n-2}, \quad E_0 = 0,\ E_1 = 1,\ E_2 = 2.
\]
From Lemma~\ref{l:13}, we have
\[
E_{m} + E_{m-1} = Q_{m-1}.
\]
Substituting this relation into identity~\ref{id:10},
\[
E_{m+n}+E_{m+n-1}  = (E_m + E_{m-1})(E_n + E_{n-1}) + 2E_{m} E_n,
\]
we obtain
\[
\begin{aligned}
Q_{m+n-1} 
&= Q_{n-1} Q_{m-1} + 2E_{n} E_{m}, \qquad n \geq 1,\ m \geq 1.
\end{aligned}
\]
Thus, formula~\ref{id:11} is verified.

\medskip

Similarly, substituting the identity for \(Q_{n}\) (see Lemma~\ref{l:13}) into identity~\ref{id:9},
\[
E_{m+n} = E_n(E_m + E_{m-1}) + E_m(E_n + E_{n-1}),
\]
we find
\[
\begin{aligned}
E_{m+n} &= E_{n}Q_{m-1} +  E_{m}Q_{n-1}, \qquad n \geq 1,\ m \geq 1.
\end{aligned}
\]
Hence, formula~\ref{id:12} follows.

\medskip 
Finally, substituting into identity~\ref{id:8},
\[
b_{m+n} = b_n(E_m + E_{m-1}-b_m) + b_m(E_n + E_{n-1} - b_n) + E_{m} E_{n},
\]
we obtain
\[
\begin{aligned}
b_{m+n} &= b_n(E_m + E_{m-1}) + b_m(E_n + E_{n-1})  - 2\,b_n b_m + E_{m} E_{n}\\
        &= b_n Q_{m-1} + b_m Q_{n-1} - 2b_n b_m + E_{m} E_{n}.
\end{aligned}
\]
Therefore, formula~\ref{id:13} is established:
\[
b_{m+n} = b_n Q_{m-1} + b_m Q_{n-1} - 2b_n b_m + E_{m} E_{n}, 
\qquad n \geq 1,\ m \geq 1.
\]
\end{proof}

\begin{lem}\label{l:11} 
Let \( r_n \) be the numbers defined by the recurrence
\[
r_n = 2 \, r_{n-1} + r_{n-2} + 1, \quad r_1 = 0,\ r_2 = 1,
\]
and let \( b_n \) be the sequence defined by
\[
b_n = b_{n-1} + 3 \, b_{n-2} + b_{n-3}, \quad b_0 = 0,\ b_1 = 1,\ b_2 = 1.
\]
Then, for all \( n \geq 1 \),

\[
b_n = 
\begin{cases}
r_{n}       & \text{if } n \text{ is even}, \\
r_{n}+1   & \text{if } n \text{ is odd}.
\end{cases}
\]
\end{lem}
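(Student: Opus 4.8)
The plan is to unify the two cases by writing the claim as $b_n = r_n + \epsilon_n$, where $\epsilon_n := \tfrac{1-(-1)^n}{2}$ equals $1$ for odd $n$ and $0$ for even $n$. The obstacle is that $b_n$ obeys a third-order recurrence while $r_n$ obeys a second-order one, so they cannot be matched term-by-term directly. I would first reconcile the orders by showing that $b_n$ in fact satisfies a second-order (Pell-type) recurrence with an alternating inhomogeneity, namely
\[
b_n = 2b_{n-1} + b_{n-2} + (-1)^{n+1}, \qquad n \ge 3.
\]
To prove this, set $d_n := b_n - 2b_{n-1} - b_{n-2}$. Substituting the defining relation $b_n = b_{n-1} + 3b_{n-2} + b_{n-3}$ gives $d_n = -b_{n-1} + 2b_{n-2} + b_{n-3} = -(b_{n-1} - 2b_{n-2} - b_{n-3}) = -d_{n-1}$ for $n \ge 4$; since $d_3 = b_3 - 2b_2 - b_1 = 1$, telescoping yields $d_n = (-1)^{n-3} = (-1)^{n+1}$. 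This is the sequence-level manifestation of the factorization $x^3 - x^2 - 3x - 1 = (x+1)(x^2 - 2x - 1)$, whose quadratic factor carries the Pell roots $1\pm\sqrt{2}$ and whose linear factor produces the alternating term.

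With this reduction in hand, I would prove $b_n = r_n + \epsilon_n$ by induction with the two base cases $n=1$ (where $b_1 = 1 = r_1 + 1$) and $n=2$ (where $b_2 = 1 = r_2$). For the inductive step $n \ge 3$, assume the identity at $n-1$ and $n-2$. Then, using the reduction above together with $r_n = 2r_{n-1} + r_{n-2} + 1$,
\[
b_n = 2b_{n-1} + b_{n-2} + (-1)^{n+1} = r_n + \bigl(2\epsilon_{n-1} + \epsilon_{n-2} + (-1)^{n+1} - 1\bigr).
\]
It therefore remains to verify the purely parity-dependent identity $\epsilon_n = 2\epsilon_{n-1} + \epsilon_{n-2} + (-1)^{n+1} - 1$, which is routine: for $n$ even both sides equal $0$, and for $n$ odd both sides equal $1$, since $\epsilon_{n-2}$ has the same parity as $n$ while $\epsilon_{n-1}$ has the opposite parity.

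The only genuinely non-routine step is the reduction of the third-order recurrence for $b_n$ to the second-order form; everything after that is bookkeeping with the parity indicator $\epsilon_n$. An essentially symmetric alternative would instead lift $r_n$: from its second-order recurrence one derives $r_n = r_{n-1} + 3r_{n-2} + r_{n-3} + 2$ for $n \ge 4$, matches this against the third-order recurrence for $b_n$, and runs the induction with base cases $n=1,2,3$ and the parity check $\epsilon_{n-1} + 3\epsilon_{n-2} + \epsilon_{n-3} - 2 = \epsilon_n$. I would favour the first route, since the second-order recurrence for $b_n$ also exposes the underlying Pell structure of the sequence.
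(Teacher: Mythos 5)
Your proof is correct, and it takes a genuinely different route from the paper's. The paper proves the lemma by strong induction directly on the third-order recurrence for $b_n$, splitting the inductive step into two parity cases and rearranging $b_{n+1}=b_n+3b_{n-1}+b_{n-2}$ so that the defining recurrence $r_{n+1}=2r_n+r_{n-1}+1$ can be spotted inside the resulting expression. You instead first collapse the third-order recurrence to the inhomogeneous Pell-type form $b_n=2b_{n-1}+b_{n-2}+(-1)^{n+1}$ via the telescoping sign argument on $d_n=b_n-2b_{n-1}-b_{n-2}$ (equivalently, the factorization $x^3-x^2-3x-1=(x+1)(x^2-2x-1)$), and then run a two-term induction in which the parity bookkeeping is isolated in the trivial identity for $\epsilon_n$. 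Your intermediate reduction is a reusable fact that the paper never states explicitly, and it makes the structural reason for the result transparent: $b_n$ and $r_n$ satisfy the same second-order inhomogeneous Pell recurrence up to the alternating term absorbed by $\epsilon_n$. The cost is one extra preliminary lemma; the paper's argument is shorter to state but leaves the appearance of the Pell recurrence inside the third-order one looking somewhat accidental. All of your computations check out, including the base values $d_3=b_3-2b_2-b_1=4-2-1=1$ and the parity verification of $\epsilon_n=2\epsilon_{n-1}+\epsilon_{n-2}+(-1)^{n+1}-1$.
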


\begin{proof}
We proceed by induction. For $n = 1$ and $n = 2$, we have
\[
b_1 = r_1 + 1 = 1, \quad b_2 = r_2 = 1,
\]
so the statement holds in these cases.
Now, assume that the formula holds for all integers up to $n$; that is,
\[
b_n = 
\begin{cases}
r_{n}       & \text{if } n \text{ is even}, \\
r_{n}+1   & \text{if } n \text{ is odd}.
\end{cases}
\]
We want to show that it holds for \( n+1 \).

\textbf{Case 1:} Suppose \( n \) is odd. Then \( n+1 \) is even, and we compute:
\begin{align*}
b_{n+1} &= b_n + 3\,b_{n-1} + b_{n-2} \\
       &= (r_{n} + 1) + 3\,r_{n-1} + (r_{n-2} + 1) \quad \text{(by induction hypothesis)} \\
       &= r_{n} + r_{n-1} + 1 + ( 2\,r_{n-1} + r_{n-2} + 1 ) \\
       &= r_{n} + r_{n-1} + 1 + r_{n} \quad \text{(by the recurrence for $r_n$ )} \\
       &= 2\,r_{n} + r_{n-1} + 1 \\
       &= r_{n+1}.
\end{align*}

\textbf{Case 2:} Suppose \( n \) is even. Then \( n+1 \) is odd, and we compute:
\begin{align*}
b_{n+1}&= b_n + 3\,b_{n-1} + b_{n-2} \\
       &= r_{n} + 3\,(r_{n-1} + 1) + r_{n-2} \quad \text{(by induction hypothesis)} \\
       &= r_{n} + 3r_{n-1} + 3 + r_{n-2} \\
       &= r_{n} + r_{n-1} + 2 + ( 2\,r_{n-1} + r_{n-2}+1 )\\
       &= r_{n} + r_{n-1} + 2 + r_{n} \quad \text{(by the recurrence for $r_n$ )}   \\
       &= (2\,r_{n} + r_{n-1} + 1) + 1 \\
       &= r_{n+1} + 1.
\end{align*}

\medskip

Therefore, by the principle of mathematical induction, the formula holds for all \( n \in \mathbb{N} \):
\[
b_n = 
\begin{cases}
r_{n}       & \text{if } n \text{ is even}, \\
r_{n}+1   & \text{if } n \text{ is odd}.
\end{cases}
\]
\end{proof}

\begin{rem}
For the Pell and Pell-Lucas numbers, the Pell-Lucas numbers $\widehat{Q}_n$, defined by 
\[
\widehat{Q}_n = 2 \widehat{Q}_{n-1} + \widehat{Q}_{n-2}, \quad \widehat{Q}_0 = \widehat{Q}_1 = 2,
\]
satisfy the identity
\begin{equation}\label{eq:lucas}
    \widehat{Q}_n = E_{n+1} + E_{n-1},
\end{equation}
where \(E_n\) is the \(n\)th Pell number. As noted in \cite{Serkland1972}, no proof is given there, so we provide one here.
\end{rem}

\begin{lem}\label{l:12} 
Let \( E_n \) be the Pell numbers defined by the recurrence
\[
E_n = 2E_{n-1} + E_{n-2}, \quad E_0 = 0,\ E_1 = 1,\ E_2 =2 ,
\]
and let \( \widehat{Q}_n \) be the  Pell–Lucas numbers defined by
\[
 \widehat{Q}_n= 2 \, \widehat{Q}_{n-1} + \widehat{Q}_{n-2} \, \quad \widehat{Q}_0 = 2 ,\,  \widehat{Q}_1 = 2 .
\]
Then, for all \( n \geq 1 \),
\[
 \widehat{Q}_n = E_{n+1} + E_{n-1}. 
\]
\end{lem}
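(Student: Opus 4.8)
The plan is to prove the identity by induction on $n$, mirroring the pattern used for Lemmas~\ref{l:4} and~\ref{l:13}. Because both $\widehat{Q}_n$ and $E_n$ obey second-order recurrences, a single base case will not suffice: I would verify the two initial cases $n=1$ and $n=2$ directly, namely $E_2 + E_0 = 2 + 0 = 2 = \widehat{Q}_1$ and $E_3 + E_1 = 5 + 1 = 6 = \widehat{Q}_2$, and then carry a two-term induction hypothesis through the step.

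For the inductive step I would assume the identity holds at $n=k$ and $n=k-1$, that is $\widehat{Q}_k = E_{k+1} + E_{k-1}$ and $\widehat{Q}_{k-1} = E_k + E_{k-2}$, and then apply the defining recurrence $\widehat{Q}_{k+1} = 2\widehat{Q}_k + \widehat{Q}_{k-1}$. Substituting the two hypotheses gives $\widehat{Q}_{k+1} = 2(E_{k+1}+E_{k-1}) + (E_k + E_{k-2})$, and the key move is to regroup this as $(2E_{k+1}+E_k) + (2E_{k-1}+E_{k-2})$. Two applications of the Pell recurrence $E_m = 2E_{m-1}+E_{m-2}$, one with $m=k+2$ and one with $m=k$, then collapse the brackets to $E_{k+2} + E_k = E_{(k+1)+1} + E_{(k+1)-1}$, which is exactly the claim at $n=k+1$.

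There is no serious obstacle here beyond the bookkeeping forced by the second-order recurrence; the only point requiring care is matching index shifts so that the two Pell-recurrence applications ($E_{k+2}=2E_{k+1}+E_k$ and $E_k = 2E_{k-1}+E_{k-2}$) align with the regrouped brackets. As an alternative that sidesteps induction, I could invoke the Binet formula~\ref{i:1}: writing $\alpha = 1+\sqrt{2}$ and $\beta = 1-\sqrt{2}$, one checks $\alpha^2+1 = 2\sqrt{2}\,\alpha$ and $\beta^2+1 = -2\sqrt{2}\,\beta$, so that $E_{n+1}+E_{n-1} = \tfrac{1}{2\sqrt{2}}\left[\alpha^{n-1}(\alpha^2+1) - \beta^{n-1}(\beta^2+1)\right] = \alpha^n + \beta^n$, which is the closed form of the Pell–Lucas numbers $\widehat{Q}_n$ (consistent with $\widehat{Q}_0=\widehat{Q}_1=2$). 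Either route is short, and I would present the inductive one to stay consistent with the surrounding lemmas.
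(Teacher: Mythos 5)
Your proof is correct and follows essentially the same route as the paper: induction on $n$ via the recurrence $\widehat{Q}_{k+1}=2\widehat{Q}_k+\widehat{Q}_{k-1}$, substituting the hypothesis and regrouping as $(2E_{k+1}+E_k)+(2E_{k-1}+E_{k-2})=E_{k+2}+E_k$. Your version is in fact slightly more careful than the paper's, which checks only the base case $n=1$ and states a one-term hypothesis while silently using it at both $k$ and $k-1$; your two base cases and two-term hypothesis close that small gap.
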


\begin{proof}
We will use the principle of mathematical induction (PMI). When $n = 1$,
\[
\widehat{Q}_1 = E_2 + E_{0}=2+0=2.
\] 
so the result is true. We assume it is true for any positive integer $n=k$:
\[
\widehat{Q}_k = E_{k+1} + E_{k-1}.
\] 
Now, we show that it is true for $n = k + 1$. Using the recurrence relation for \( E_n \), we compute:
\begin{align*}
\widehat{Q}_{k+1} &= 2\widehat{Q}_k + \widehat{Q}_{k-1} \\
        &= 2(E_{k+1}+E_{k-1}) + (E_{k} + E_{k-2}) \quad \text{(by the induction hypothesis)} \\
        &= (2E_{k+1} +  E_{k}) + ( 2E_{k-1}+E_{k-2} ) \\
        &= E_{k+2} +  E_{k} \quad \text{(by the recurrence for $E_{n}$)}              
\end{align*}
Thus, the identity holds for \( n = k+1 \), and by induction, it holds for all positive integers \( n \).
\end{proof}

\begin{cor}\label{c:13} 
Let \( \widehat{Q}_n \) be the Pell-Lucas numbers defined by the recurrence
\[
\widehat{Q}_n = 2 \, \widehat{Q}_{n-1} + \widehat{Q}_{n-2}, \quad \widehat{Q}_0 = 2,\ \widehat{Q}_1 = 2,
\]
and let \( r_n \) be the sequence defined by
\[
r_n = 2 \, r_{n-1} + r_{n-2} + 1, \quad r_0 =r_1 = 0,\ r_2 = 1,
\]
Then, for all \( n \geq 0 \),
\[
\widehat{Q}_n =4\,r_{n} + 2 .
\]

\end{cor}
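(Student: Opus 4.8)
The plan is to argue by induction on $n$, in the same spirit as Lemmas~\ref{l:12} and~\ref{l:11}. Set $s_n := 4r_n + 2$; the goal is to prove $s_n = \widehat{Q}_n$ for all $n \ge 0$. A clean way to organize this is to observe that $(\widehat{Q}_n)$ and $(s_n)$ are each determined by a second-order linear recurrence together with two initial values, so it suffices to check that the first two values agree and that $(s_n)$ obeys the \emph{same} homogeneous recurrence as $(\widehat{Q}_n)$; uniqueness of solutions then forces the two sequences to coincide.

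For the base cases I would use $r_0 = r_1 = 0$, which give $s_0 = s_1 = 2 = \widehat{Q}_0 = \widehat{Q}_1$. For the inductive step I would assume $\widehat{Q}_k = 4r_k + 2$ for the indices $k = n$ and $k = n-1$, expand $\widehat{Q}_{n+1} = 2\widehat{Q}_n + \widehat{Q}_{n-1}$ via the induction hypothesis to obtain $8r_n + 4r_{n-1} + 6$, and then regroup this as $4(2r_n + r_{n-1} + 1) + 2$. The parenthesized quantity is exactly $r_{n+1}$ by the defining recurrence $r_{n+1} = 2r_n + r_{n-1} + 1$, so the right-hand side collapses to $4r_{n+1} + 2$, closing the induction.

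The only subtle point---and really the conceptual content of the statement---is the mismatch between the \emph{homogeneous} recurrence satisfied by $\widehat{Q}_n$ and the \emph{inhomogeneous} one satisfied by $r_n$. The regrouping $8r_n + 4r_{n-1} + 6 = 4(2r_n + r_{n-1} + 1) + 2$ is what reconciles them: scaling the constant term $+1$ of the $r_n$-recurrence by the factor $4$ produces exactly the surplus needed to keep the additive constant fixed at $+2$ while the linear part regenerates $4r_{n+1}$. Phrased invariantly, the affine substitution $x \mapsto 4x + 2$ is precisely the change of variables that transforms the inhomogeneous recurrence of $(r_n)$ into the homogeneous Pell--Lucas recurrence, so once this regrouping is spotted no genuine obstacle remains.
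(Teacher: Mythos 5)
Your proof is correct, and it takes a genuinely different route from the paper's. You argue directly by strong induction on the two defining recurrences: the base cases $r_0=r_1=0$ give $4r_0+2=4r_1+2=2=\widehat{Q}_0=\widehat{Q}_1$, and the step $2(4r_n+2)+(4r_{n-1}+2)=4(2r_n+r_{n-1}+1)+2=4r_{n+1}+2$ is a valid one-line computation (the $r$-recurrence applies since $n+1\ge 2$), so uniqueness of solutions of a second-order recurrence with fixed initial data finishes the argument. The paper instead derives the identity as a consequence of its earlier structural results: it writes $\widehat{Q}_n=E_{n+1}+E_{n-1}$ (Lemma~\ref{l:12}), expands each Pell number as $E_n=b_n+b_{n-1}$ (Lemma~\ref{l:4}), and then converts the four $b$-terms into $r$-terms using the parity-dependent relation of Lemma~\ref{l:11}, which forces a case split on the parity of $n$ and several applications of the $r$-recurrence to collapse $r_{n+1}+r_n+r_{n-1}+r_{n-2}+2$ down to $4r_n+2$. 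Your argument is shorter, avoids the parity cases entirely, and isolates the real content (the affine change of variables $x\mapsto 4x+2$ homogenizes the $r$-recurrence into the Pell--Lucas one); the paper's version buys coherence with its matrix-driven chain of identities, since it reuses the lemmas already established rather than introducing a standalone induction. Both are complete proofs.
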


\begin{proof} 

Let \( b_n \) be the sequence defined by
\[
b_n = b_{n-1} + 3b_{n-2} + b_{n-3}, 
\quad b_0 = 0,\ b_1 = 1,\ b_2 = 1.
\]
Using the identity for Pell-Lucas numbers (see Lemma~\ref{l:12}), we have
\begin{align*}
\widehat{Q}_n &= E_{n+1} + E_{n-1} \\
    &   \quad \text{(by the identity for Pell numbers; see Lemma~\ref{l:4})}\\
&= \big(b_{n+1} + b_{n}\big) + \big(b_{n-1} + b_{n-2}\big). 
\end{align*}       
Next, applying the formula relating the recurrences \( b_{n} \) and \( r_{n} \) from Lemma~\ref{l:11}, we proceed as follows.

\medskip
\noindent

\textbf{Case 1:} \(n\) even.      
\begin{align*}       
\widehat{Q}_n &= b_{n+1} + b_{n} + b_{n-1} + b_{n-2} \\
    &= (r_{n+1} + 1) + r_{n} + (r_{n-1} + 1) + r_{n-2} 
       \quad \text{(by the identity for $b_{n}$; see Lemma~\ref{l:11})} \\
    &= r_{n+1} + r_{n} + r_{n-1} + r_{n-2} + 2\\
    &= r_{n+1} + (2r_{n-1}+ r_{n-2}+1) + r_{n-1} + r_{n-2} + 2\\    
    &= r_{n+1} + 3r_{n-1} + 2r_{n-2} + 3\\ 
    &= (2r_{n}+r_{n-1}+1) + 3r_{n-1} + 2r_{n-2} + 3\\ 
    &= 2r_{n}+4r_{n-1} + 2r_{n-2} + 4\\     
    &= 2(r_{n}+2r_{n-1} + r_{n-2} + 1)+2\\   
    &= 2(r_{n}+r_{n})+2\\      
    &= 4r_{n}+2\\
\end{align*}

\medskip
\noindent
\textbf{Case 2:} \(n\) odd.
\begin{align*}       
\widehat{Q}_n &= b_{n+1} + b_{n} + b_{n-1} + b_{n-2} \\
    &= r_{n+1} + (r_{n} + 1) + r_{n-1} + (r_{n-2} + 1) 
       \quad \text{(by the identity for \(b_{n}\); see Lemma~\ref{l:11})} \\
    &= r_{n+1} + r_{n} + r_{n-1} + r_{n-2} + 2.
\end{align*}
\medskip
In both cases, we obtain
\[
\widehat{Q}_n = 4\, r_{n} +  2,
\]
for all \( n \geq 0 \).

\end{proof}

\begin{cor}\label{c:16}
Let \( Q_n \) be the generalized Pell sequence defined by the recurrence
\[
Q_n = 2 \, Q_{n-1} + Q_{n-2}, \quad Q_0 = 1,\ Q_1 = 3,
\]
and let \( r_n \) be the sequence defined by
\[
r_n = 2 \, r_{n-1} + r_{n-2} + 1, \quad r_0 =r_1 = 0,\ r_2 = 1,
\]
Then, for all \( n \geq 0 \),
\[
Q_n = 2\, r_{n+1} + 1 .
\]

\end{cor}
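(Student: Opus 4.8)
The plan is to prove $Q_n = 2r_{n+1}+1$ by induction on $n$, in the same style as the preceding lemmas. Since $Q_n$ obeys the second-order homogeneous Pell recurrence and $r_n$ obeys the second-order inhomogeneous recurrence $r_n = 2r_{n-1}+r_{n-2}+1$, a double base case together with a single inductive step will suffice. First I would verify the base cases $n=0$ and $n=1$: using $r_0=r_1=0$ and $r_2=1$, one checks $2r_1+1 = 1 = Q_0$ and $2r_2+1 = 3 = Q_1$, so the identity holds at both starting indices.

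For the inductive step, assume $Q_k = 2r_{k+1}+1$ for $k=n-1$ and $k=n$. I would expand $Q_{n+1}$ through its recurrence and substitute the hypothesis:
\begin{align*}
Q_{n+1} &= 2Q_n + Q_{n-1} \\
        &= 2(2r_{n+1}+1) + (2r_n+1) \\
        &= 4r_{n+1} + 2r_n + 3.
\end{align*}
The target value is $2r_{n+2}+1$, and applying the recurrence $r_{n+2}=2r_{n+1}+r_n+1$ gives $2r_{n+2}+1 = 4r_{n+1}+2r_n+3$, which matches the expression above and closes the induction.

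The only delicate point, and the closest thing to an obstacle, is the bookkeeping of the constant terms. The two $+1$'s inherited from the induction hypothesis combine with the $+1$ built into the $r$-recurrence to yield exactly the single $+1$ on the right-hand side; because the $Q$-recurrence is homogeneous while the $r$-recurrence is inhomogeneous by a constant, these constants must balance rather than accumulate, and confirming this is the heart of the step.

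Alternatively, the identity can be obtained without a fresh induction by assembling earlier results. From Corollary~\ref{c:13} at index $n+1$ we have $\widehat{Q}_{n+1}=4r_{n+1}+2$, so $2r_{n+1}+1 = \tfrac{1}{2}\widehat{Q}_{n+1}$. By Lemma~\ref{l:12}, $\widehat{Q}_{n+1}=E_{n+2}+E_n = (2E_{n+1}+E_n)+E_n = 2(E_{n+1}+E_n)$, and by Lemma~\ref{l:13} the bracket equals $Q_n$. Hence $\tfrac{1}{2}\widehat{Q}_{n+1}=E_{n+1}+E_n=Q_n$, giving the claim directly; I would present the direct induction as the main argument and record this derivation as a remark.
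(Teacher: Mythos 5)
Your proof is correct, but it takes a genuinely different route from the paper. The paper derives the identity by chaining earlier results: it writes $Q_n = E_n + E_{n+1}$ (Lemma~\ref{l:13}), expands each Pell number as $b_n + b_{n-1}$ (Lemma~\ref{l:4}), and then converts the $b$-terms to $r$-terms via the parity-dependent relation of Lemma~\ref{l:11}, which forces a case split on whether $n$ is even or odd before the $r$-recurrence collapses everything to $2r_{n+1}+1$. Your main argument bypasses the auxiliary sequence $b_n$ and the parity analysis entirely: a two-base-case induction directly on the defining recurrences, where the essential point (which you correctly isolate) is that the inhomogeneous $+1$ in the $r$-recurrence exactly absorbs the constants inherited from the induction hypothesis. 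Your version is shorter and more self-contained; the paper's version has the side benefit of exhibiting the identity as part of its web of relations among $E_n$, $Q_n$, $b_n$, and $r_n$. Your alternative derivation via $\widehat{Q}_{n+1} = 4r_{n+1}+2$ (Corollary~\ref{c:13}) and $\widehat{Q}_{n+1} = E_{n+2}+E_n = 2Q_n$ is also valid and non-circular, since Corollary~\ref{c:13} and Lemmas~\ref{l:12} and~\ref{l:13} all precede this corollary in the paper; note, though, that the paper proves $\widehat{Q}_n = 2Q_{n-1}$ only \emph{after} this corollary and uses this corollary to do so, so your remark effectively reverses the paper's order of deduction at that point.
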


\begin{proof}
Let \( b_n \) be the sequence defined by
\[
b_n = b_{n-1} + 3b_{n-2} + b_{n-3}, 
\quad b_0 = 0,\ b_1 = 1,\ b_2 = 1.
\]
Then, using the identity for Pell–Lucas numbers (see Lemma~\ref{l:13}), we have
\begin{align*}
Q_n &= E_{n} + E_{n+1} \\ 
    &= (b_{n} + b_{n-1}) + (b_{n+1} + b_{n}) 
       \quad \text{(by the identity for Pell numbers; see Lemma~\ref{l:4})}.
\end{align*}
\medskip
Applying the relation between the recurrences \( b_n \) and \( r_n \) established in Lemma~\ref{l:11}, we consider two cases according to the parity of \( n \):

\medskip

\noindent\textbf{Case 1:} \( n \) even.
\begin{align*}       
   Q_n &= (b_{n} + b_{n-1}) + (b_{n+1} + b_{n}) \\
       &= \big(r_{n} + (r_{n-1} + 1)\big) + \big((r_{n+1} + 1) + r_{n}\big)\\ 
       &\quad \text{(by the identity for \( b_n \); see Lemma~\ref{l:11})} \\
       &= r_{n+1} + 2r_{n} + r_{n-1} + 2\\
       &= r_{n+1} + \big(2r_{n} + r_{n-1} + 1\big) + 1\\    
       &= r_{n+1} + r_{n+1} + 1\\           
       &= 2\,r_{n+1} + 1 .
\end{align*}

\medskip

\noindent\textbf{Case 2:} \( n \) odd.
\begin{align*}       
   Q_n &= (b_{n} + b_{n-1}) + (b_{n+1} + b_{n}) \\
       &= \big((r_{n} + 1) + r_{n-1}\big) + \big(r_{n+1} + (r_{n} + 1)\big)\\ 
         & \quad \text{(by the identity for \( b_n \); see Lemma~\ref{l:11})} \\
       &= r_{n+1} + 2r_{n} + r_{n-1} + 2\\
       &= r_{n+1} + \big(2r_{n} + r_{n-1} + 1\big) + 1\\    
       &= r_{n+1} + r_{n+1} + 1\\           
       &= 2\,r_{n+1} + 1 .
\end{align*}
\medskip
Therefore, for all \( n \geq 0 \), we obtain the identity
\[
Q_n = 2\,r_{n+1} + 1.
\]
\end{proof}
As noted in \cite{Horadam1994}, the identity
\[
\widehat{Q}_n = 2\,Q_{n-1}
\]
holds. Since no proof is given in that reference, we supply one here.
\begin{cor}
Let \( Q_n \) be the generalized Pell sequence defined by the recurrence
\[
Q_n = 2 \, Q_{n-1} + Q_{n-2}, \quad Q_0 = 1,\ Q_1 = 3,
\]
and let \( \widehat{Q}_n \) be the Pell-Lucas numbers defined by
\[
\widehat{Q}_n = 2 \, \widehat{Q}_{n-1} + \widehat{Q}_{n-2}, \quad \widehat{Q}_0 = 2,\ \widehat{Q}_1 = 2,
\]
Then, for all \( n \geq 0 \),
\[
\widehat{Q}_n = 2\,Q_{n-1} .
\]

\end{cor}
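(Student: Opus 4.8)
The plan is to express both $\widehat{Q}_n$ and $Q_{n-1}$ in terms of consecutive Pell numbers using identities already established, and then to reconcile the two expressions through a single application of the Pell recurrence. Concretely, I would first invoke Lemma~\ref{l:12}, which gives $\widehat{Q}_n = E_{n+1} + E_{n-1}$, together with Lemma~\ref{l:13} under the index shift $n \mapsto n-1$, which gives $Q_{n-1} = E_{n-1} + E_n$. These two formulas reduce the claimed identity $\widehat{Q}_n = 2\,Q_{n-1}$ to a statement about Pell numbers alone.

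The second step is purely algebraic: substituting the two expressions, the target becomes $E_{n+1} + E_{n-1} = 2(E_{n-1} + E_n)$, which rearranges to $E_{n+1} = 2E_n + E_{n-1}$, precisely the defining recurrence of the Pell sequence. Hence the identity holds for every $n$ for which both lemmas apply, namely $n \geq 1$. An equally short alternative uses the two corollaries just proved: Corollary~\ref{c:13} gives $\widehat{Q}_n = 4\,r_n + 2$, while Corollary~\ref{c:16} (with $n \mapsto n-1$) gives $Q_{n-1} = 2\,r_n + 1$, so multiplying the latter by $2$ reproduces $4\,r_n + 2$ and matches $\widehat{Q}_n$ at once. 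Either route is essentially a one-line deduction from prior results.

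The only genuine subtlety is the boundary index $n = 0$, since $Q_{n-1} = Q_{-1}$ lies outside the range covered by Lemma~\ref{l:13} and Corollary~\ref{c:16} as stated. I would dispatch this case by a direct check: running the $Q$-recurrence one step to the left gives $Q_{-1} = Q_1 - 2Q_0 = 1$, so $2\,Q_{-1} = 2 = \widehat{Q}_0$, in agreement with the formula. Apart from this index bookkeeping there is no real obstacle, as the content of the statement is simply the Pell recurrence transported through the two auxiliary sequences $E_n$ and $r_n$.
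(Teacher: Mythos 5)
Your proof is correct, and your primary route differs from the paper's. The paper proves this corollary exactly by your ``alternative'' route: it writes $\widehat{Q}_n = 4r_n + 2$ (Corollary~\ref{c:13}), factors this as $2(2r_n+1)$, and identifies the parenthesis as $Q_{n-1}$ via Corollary~\ref{c:16}. Your main argument instead goes through the Pell numbers: $\widehat{Q}_n = E_{n+1}+E_{n-1}$ (Lemma~\ref{l:12}) and $Q_{n-1}=E_{n-1}+E_n$ (Lemma~\ref{l:13}), so the claim collapses to the defining recurrence $E_{n+1}=2E_n+E_{n-1}$. This is at least as clean and arguably more self-contained, since it relies only on two lemmas proved directly by induction, whereas the paper's route passes through the longer chain of results involving $b_n$ and $r_n$ (Lemmas~\ref{l:4} and~\ref{l:11} feed into Corollaries~\ref{c:13} and~\ref{c:16}). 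You also gain something the paper omits: the statement is asserted for all $n\geq 0$, but both the paper's ingredients and yours are only stated for indices that exclude $n=0$ (Corollary~\ref{c:16} gives $Q_{n-1}=2r_n+1$ only for $n\geq 1$, and Lemma~\ref{l:13} gives $Q_{n-1}=E_{n-1}+E_n$ only for $n\geq 2$ as literally stated), so your explicit check of the boundary case --- extending the recurrence leftward to $Q_{-1}=1$ and verifying $2Q_{-1}=2=\widehat{Q}_0$, plus the trivial verification $Q_0=1=E_0+E_1$ at $n=1$ --- closes a small gap that the paper's own proof leaves open.
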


\begin{proof}
Let us consider
\begin{align*}       
   \widehat{Q}_n &= 4\,r_{n} + 2 \quad \text{(by the identity for $ \widehat{Q}_n $; see Corollary~\ref{c:13})}  \\
       &= 2\,(2\,r_{n} + 1) \\  
       &= 2\,Q_{n-1} \quad \text{(by the identity for $Q_n $; see Corollary~\ref{c:16})}
\end{align*}
Therefore, for all \( n \geq 0 \), we obtain the identity
\[
\widehat{Q}_n = 2\,Q_{n-1} .
\]
\end{proof}

\begin{cor}\label{l:13a} 
Let \( E_n \) be the Pell numbers defined by the recurrence
\[
E_n = 2 \, E_{n-1} + E_{n-2}, \quad E_0 = 0  ,\ E_1 = 1 ,
\]
and let \( r_n \) be the sequence defined by
\[
r_n = 2 \, r_{n-1} + r_{n-2} + 1, \quad r_1 = 0,\ r_2 = 1,
\]
Then, for all \( n \geq 2 \),
\[
E_n =r_{n}+r_{n-1}+1 .
\]

\end{cor}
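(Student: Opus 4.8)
The plan is to derive this identity directly from two results already established, rather than setting up a fresh induction. The key observation is that Lemma~\ref{l:4} expresses each Pell number as $E_n = b_n + b_{n-1}$, while Lemma~\ref{l:11} describes exactly how the sequence $b_n$ differs from $r_n$, the difference being governed purely by the parity of the index. Since $n$ and $n-1$ always have opposite parities, precisely one of the two terms $b_n,\ b_{n-1}$ will carry an extra $+1$ relative to its $r$-counterpart, and this single surplus of $1$ should account for the constant term in the claimed formula.

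First I would invoke Lemma~\ref{l:4} to write $E_n = b_n + b_{n-1}$, valid for $n \ge 1$; since we only need $n \ge 2$, the index $n-1 \ge 1$ is also in the range where Lemma~\ref{l:11} applies, so both $b_n$ and $b_{n-1}$ may be rewritten in terms of $r_n$ and $r_{n-1}$. Then I would split into two cases according to the parity of $n$. If $n$ is even, then $n-1$ is odd, so Lemma~\ref{l:11} gives $b_n = r_n$ and $b_{n-1} = r_{n-1}+1$, whence $E_n = r_n + (r_{n-1}+1) = r_n + r_{n-1} + 1$. If $n$ is odd, then $n-1$ is even, giving $b_n = r_n + 1$ and $b_{n-1} = r_{n-1}$, so that $E_n = (r_n + 1) + r_{n-1} = r_n + r_{n-1} + 1$. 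In both cases the same expression emerges, completing the argument.

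I do not expect any genuine obstacle here, since the statement is a clean corollary of the two preceding lemmas; the only point requiring a moment's care is bookkeeping on the ranges of validity. One must confirm that both $b_n$ and $b_{n-1}$ fall within the index range where Lemma~\ref{l:11} holds, which is why the hypothesis $n \ge 2$ (rather than $n \ge 1$) is exactly what is needed. A quick sanity check against small values—$E_2 = r_2 + r_1 + 1 = 1 + 0 + 1 = 2$ and $E_3 = r_3 + r_2 + 1 = 3 + 1 + 1 = 5$—confirms both the formula and the correct alignment of initial conditions before committing to the case analysis.
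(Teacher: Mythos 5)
Your proposal is correct and follows essentially the same route as the paper: both invoke Lemma~\ref{l:4} to write $E_n = b_n + b_{n-1}$ and then apply Lemma~\ref{l:11} to replace $b_n$ and $b_{n-1}$ by their $r$-counterparts, the opposite parities of $n$ and $n-1$ contributing exactly one extra $+1$. Your version merely makes the parity case analysis explicit, which the paper leaves implicit.
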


\begin{proof}

Let \( b_n \) be the sequence defined by
\[
b_n = b_{n-1} + 3b_{n-2} + b_{n-3}, \quad b_0 = 0,\ b_1 = 1,\ b_2 = 1.
\]
Then
\begin{align*}
E_n &= b_{n} + b_{n-1} \quad \text{(by the identity  Pell-numbers; see Lemma~\ref{l:4})} \\
    &= r_{n}+r_{n-1}+1 \text{(by the identity for $b_{n}$; see Lemma~\ref{l:11})}.
\end{align*}       

\end{proof}

\subsection{Inverse Powers of the Generating Matrix and Identities for Recurrence Relations}

\begin{lem}\label{l:17} 
For all \( n \geq 1 \), we have the following five identities: 
\begin{align}  
 r_n + E_{n}  & =r_{n+1}, \label{id:3} \\
 E_n + Q_{n-1}& =E_{n+1,}  \label{id:4} \quad\textnormal{(appears in \cite{Horadam1994})} \\
2E_n + Q_{n-1} &=Q_{n},  \label{id:5}\\
 E_n + Q_{n-1}- r_n  &=Q_{n}-r_{n+1}. \label{id:6}
\end{align}
\end{lem}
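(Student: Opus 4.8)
The plan is to derive all four identities from the two structural results already established, namely Lemma~\ref{l:13} (which gives $Q_n = E_n + E_{n+1}$, equivalently $Q_{n-1} = E_{n-1} + E_n$) and Corollary~\ref{l:13a} (which gives $E_n = r_n + r_{n-1} + 1$), together with the defining recurrences for $E_n$, $Q_n$, and $r_n$. Each identity reduces to a one-line algebraic manipulation once the right substitution is made, so the proof is essentially a matter of ordering the dependencies correctly and verifying the smallest index by hand.

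First I would prove \ref{id:4}. Substituting $Q_{n-1} = E_{n-1} + E_n$ from Lemma~\ref{l:13} gives $E_n + Q_{n-1} = E_n + E_{n-1} + E_n = 2E_n + E_{n-1}$, which equals $E_{n+1}$ by the Pell recurrence. Next, \ref{id:5} follows most economically from the identity just proved: since $Q_n = E_n + E_{n+1}$ by Lemma~\ref{l:13} and $E_{n+1} = E_n + Q_{n-1}$ by \ref{id:4}, we obtain $Q_n = E_n + (E_n + Q_{n-1}) = 2E_n + Q_{n-1}$.

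Then I would turn to \ref{id:3}. By Corollary~\ref{l:13a}, $E_n = r_n + r_{n-1} + 1$ for $n \geq 2$, so $r_n + E_n = 2r_n + r_{n-1} + 1 = r_{n+1}$ by the recurrence for $r_n$; the base case $n = 1$ is checked directly, since $r_1 + E_1 = 0 + 1 = 1 = r_2$. Finally, \ref{id:6} is obtained by combining the previous two: using $r_{n+1} = r_n + E_n$ from \ref{id:3} and $Q_n = 2E_n + Q_{n-1}$ from \ref{id:5}, we compute $Q_n - r_{n+1} = (2E_n + Q_{n-1}) - (r_n + E_n) = E_n + Q_{n-1} - r_n$, which is exactly the left-hand side.

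The only subtlety---and the closest thing to an obstacle---is bookkeeping at the boundary: Corollary~\ref{l:13a} is stated for $n \geq 2$ and Lemma~\ref{l:13} for $n \geq 1$, so the index shifts (the appearance of $Q_{n-1}$ and $r_{n-1}$) force one to verify the case $n = 1$ separately in each identity that relies on them. These checks are immediate from the initial values $E_0 = 0$, $E_1 = 1$, $Q_0 = 1$, $r_1 = 0$, $r_2 = 1$, so no genuine difficulty arises; the proof is entirely a deductive chain among already-proven facts.
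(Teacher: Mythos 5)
Your proposal is correct and follows essentially the same route as the paper: both derive \ref{id:3} from Corollary~\ref{l:13a} and the recurrence for $r_n$, \ref{id:4} and \ref{id:5} from Lemma~\ref{l:13} and the Pell recurrence, and \ref{id:6} by chaining the earlier identities (you route through \ref{id:5} where the paper routes through \ref{id:4}, a cosmetic difference). Your explicit base-case check at $n=1$ for \ref{id:3} is a small point of extra care that the paper omits.
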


\begin{proof}
Let \( r_n \) be the sequence defined by
\[
r_n = 2 \, r_{n-1} + r_{n-2} + 1, \quad r_0 = r_1 = 0,\ r_2 = 1,
\] and let \( E_n \) denote the Pell numbers, defined by the recurrence relation
\[
E_n = 2E_{n-1} + E_{n-2}, \quad E_0 = 0,\ E_1 = 1,\ E_2 = 2.
\]
Then,
\begin{align*} 
 r_n + E_{n} 
    &= r_n + \big(r_n + r_{n-1} + 1\big) 
       \quad \text{(by the identity for $E_{n}$; see Corollary~\ref{l:13a})}\\ 
    &= 2r_n + r_{n-1} + 1\\ 
    &= r_{n+1}.
\end{align*}
Hence, we arrive at formula ~\ref{id:3}, \,  \( r_n + E_{n} = r_{n+1} \) for all \( n \geq 1 \).
\bigskip

Next, let \( Q_n \) represent the generalized Pell sequence, defined by
\[
Q_n = 2Q_{n-1} + Q_{n-2}, \quad Q_0 = 1,\ Q_1 = 3.
\]
It follows that
\begin{align*} 
 E_n + Q_{n-1}  
    &= E_n + \big(E_{n-1} + E_{n}\big) 
       \quad \text{(by the identity for $Q_{n}$; see Lemma~\ref{l:13})} \\
    &= 2E_{n} + E_{n-1} \\
    &= E_{n+1} \quad \text{(by the recurrence for $E_{n}$)}.
\end{align*}
Therefore, we obtain formula ~\ref{id:4}, \, \( E_n + Q_{n-1} = E_{n+1} \) for all \( n \geq 1 \).  Moreover,
\begin{align*} 
 2E_n + Q_{n-1} 
    &= E_n + \big(E_{n} + Q_{n-1}\big)\\
    &= E_{n} + E_{n+1} 
       \quad \text{(by identity~\ref{id:4})}\\
    &= Q_{n} 
       \quad \text{(by the identity for $Q_{n}$; see Lemma~\ref{l:13})}.
\end{align*}
Thus, we obtain formula ~\ref{id:5}, \, \( 2E_n + Q_{n-1} = Q_{n} \) for every \( n \geq 1 \).

\bigskip 
Finally,
\begin{align*} 
Q_{n} - r_{n+1}      
    &= \big(E_{n} + E_{n+1}\big) - r_{n+1} 
       \quad \text{(by the identity for $Q_{n}$; see Lemma~\ref{l:13})} \\
    &= \big(E_{n} + E_{n+1}\big) - r_{n} - E_{n} 
       \quad \text{(by identity~\ref{id:3})}\\
    &= E_{n+1} - r_{n}\\
    &= E_{n} + Q_{n-1} - r_{n} 
       \quad \text{(by identity~\ref{id:4})}.
\end{align*}
It follows that formula ~\ref{id:6} holds, \( E_{n} + Q_{n-1} - r_{n} = Q_{n} - r_{n+1} \) for all \( n \geq 1 \).

\end{proof}

\begin{cor}\label{l:22} 
Let \( Q_n \) be the generalized Pell sequence defined by the recurrence
\[
Q_n = 2 \, Q_{n-1} + Q_{n-2}, \quad Q_0 = 1,\ Q_1 = 3,
\]
and let \( b_n \) be the sequence defined by
\[
b_n = b_{n-1} + 3b_{n-2} + b_{n-3}, \quad b_0 = 0,\ b_1 = 1,\ b_2 = 1.
\]
Then, for all \( n \geq 1 \),
\[
Q_{n-1} = b_{n+1} - b_{n-1}.
\]
\end{cor}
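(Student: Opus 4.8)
The plan is to prove this purely by chaining together the two structural identities already established, without setting up a fresh induction. The relevant inputs are Lemma~\ref{l:13}, which gives $Q_n = E_n + E_{n+1}$, and Lemma~\ref{l:4}, which gives $E_n = b_n + b_{n-1}$; the final ingredient is simply the defining recurrence of $b_n$, suitably shifted.

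First I would apply Lemma~\ref{l:13} at index $n-1$ to write $Q_{n-1} = E_{n-1} + E_n$. Then I would expand each Pell number in terms of the $b$-sequence using Lemma~\ref{l:4}, namely $E_n = b_n + b_{n-1}$ and $E_{n-1} = b_{n-1} + b_{n-2}$, so that $Q_{n-1} = b_n + 2b_{n-1} + b_{n-2}$. The last step is entirely formal: writing the defining relation $b_m = b_{m-1} + 3b_{m-2} + b_{m-3}$ at $m = n+1$ gives $b_{n+1} = b_n + 3b_{n-1} + b_{n-2}$, hence $b_{n+1} - b_{n-1} = b_n + 2b_{n-1} + b_{n-2}$. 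Comparing this with the expression just obtained yields $Q_{n-1} = b_{n+1} - b_{n-1}$, as desired.

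The only delicate point, and the step I expect to demand the most care, is the index range rather than any genuine mathematical difficulty. Lemma~\ref{l:4} is stated for $n \geq 1$, so the substitution $E_{n-1} = b_{n-1} + b_{n-2}$ is directly licensed only for $n \geq 2$. For the remaining base case $n = 1$ I would verify the claim by hand, $Q_0 = 1 = 1 - 0 = b_2 - b_0$; alternatively, extending the $b$-recurrence backward forces $b_{-1} = 0$ (from $b_2 = b_1 + 3b_0 + b_{-1}$), which makes Lemma~\ref{l:4} valid at $n = 0$ as well and lets the single algebraic computation above cover $n = 1$ uniformly. Either route closes the argument, so the obstacle here is bookkeeping at small indices, not substance.
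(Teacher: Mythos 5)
Your argument is correct, but it takes a slightly different route from the paper's. The paper writes $Q_{n-1}$ as a \emph{difference} of consecutive Pell numbers via identity~\eqref{id:4} of Lemma~\ref{l:17}, namely $E_n + Q_{n-1} = E_{n+1}$, and then substitutes $E_n = b_n + b_{n-1}$ and $E_{n+1} = b_{n+1} + b_n$ from Lemma~\ref{l:4}; the terms $b_n$ cancel and the identity drops out immediately, with no use of the $b$-recurrence and no small-index issues, since Lemma~\ref{l:4} is only invoked at indices $n$ and $n+1$, both $\geq 1$. You instead write $Q_{n-1}$ as a \emph{sum}, $Q_{n-1} = E_{n-1} + E_n$ from Lemma~\ref{l:13}, which forces you to expand $E_{n-1} = b_{n-1} + b_{n-2}$ and then to invoke the defining recurrence of $b_{n+1}$ to reassemble the result. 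This works, and you correctly identify and close the resulting gap at $n=1$ (note that the shift in Lemma~\ref{l:13} to index $n-1$ also only covers $n \geq 2$ as stated, but your hand check of $Q_0 = 1 = b_2 - b_0$ disposes of that case too). The trade-off is that the paper's choice of input identity buys a two-line cancellation valid uniformly for $n \geq 1$, whereas yours costs an extra appeal to the $b$-recurrence and the base-case bookkeeping you describe; both are legitimate derivations from previously established results.
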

\begin{proof}
Since
\[
E_{n} = b_{n} + b_{n-1}. \quad \text{(see Lemma~\ref{l:4})}
\]
Substituting this into identity~\ref{id:4},
\[
E_{n} + Q_{n-1} = E_{n+1}, 
\quad \text{(see Lemma~\ref{l:17})}
\]
gives
\[
\begin{aligned}
 b_{n} + b_{n-1} + Q_{n-1} &=  b_{n+1} + b_{n} \\
\end{aligned}
\]
Thus, for all  \( n \geq 1 \), we obtain the identity
\[
Q_{n-1} = b_{n+1} - b_{n-1}.
\]
\end{proof}

\begin{lem}
Let \( u \) be the Pell matrix defined by
\[
u = \begin{pmatrix} 
                    0   &  1   &  1 \\
                    1   &  0   &  1 \\
                    1   &  1   &  1\\
\end{pmatrix}.
\] Then,  the inverse of the matrix \( u^n \) is:

\[
u^{-n} = 
\begin{pmatrix} \vspace{0.2cm}
(-1)^{n}(Q_{n-1}-r_{n})  & (-1)^{n}r_{n} &  (-1)^{n-1}E_{n} \\ \vspace{0.2cm}
 (-1)^{n}r_{n}  & (-1)^{n}(Q_{n-1}-r_{n}) &  (-1)^{n-1}E_{n} \\
(-1)^{n-1}E_{n}  &  (-1)^{n-1}E_{n} & (-1)^{n}Q_{n-1}
\end{pmatrix}
\]
Where \( Q_0 = 1 \), \( Q_1 = 3 \), and \( Q_n = 2 Q_{n-1} + Q_{n-2} \); this defines the generalized Pell sequence.  Moreover, \( r_n = 2 r_{n-1} + r_{n-2} + 1 \), with initial conditions \( r_0 = r_1 = 0 \) and \( r_2 = 1 \).
\end{lem}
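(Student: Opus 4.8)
The plan is to leverage the determinant computation from the preceding corollary, which shows $\det(u^n)=1$. Hence $u^{-n}$ equals the classical adjoint (adjugate) of $u^n$, and since $u^n$ is symmetric by Lemma~\ref{l:5}, this adjugate coincides with the cofactor matrix of $u^n$, which is again symmetric. The task therefore reduces to computing the six distinct cofactors of $u^n$ and identifying them with the claimed entries. Writing $x=E_n$, $y=E_{n-1}$, $z=b_n$ for brevity, Lemma~\ref{l:5} puts $u^n$ in the form
\[
u^n=\begin{pmatrix} x+y-z & z & x\\ z & x+y-z & x\\ x & x & x+y\end{pmatrix},
\]
and I will repeatedly use the two structural relations $Q_{n-1}=x+y$ (Lemma~\ref{l:13}) and $x+y-2z=(-1)^n$ (the determinant corollary).

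First I would dispose of the three ``easy'' cofactors. The $(1,3)$ and $(2,3)$ cofactors both equal $x\,(2z-x-y)=-x\,(x+y-2z)=(-1)^{n-1}E_n$, using only the determinant relation, while the $(3,3)$ cofactor equals $(x+y-z)^2-z^2=(x+y-2z)(x+y)=(-1)^nQ_{n-1}$, using in addition $Q_{n-1}=x+y$. These already reproduce the $(1,3),(2,3),(3,1),(3,2)$ and $(3,3)$ entries of the asserted matrix.

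The substance lies in the $(1,1)$ and $(1,2)$ cofactors, which are genuinely quadratic in $x$ and $z$. The key preliminary step is the quadratic identity $2E_n^2=Q_{n-1}^2-(-1)^n$: substituting $E_{n-1}=Q_{n-1}-E_n$ (Lemma~\ref{l:13}) into the Cassini-like identity of Lemma~\ref{l:7}, namely $E_n^2-E_{n-1}^2-2E_nE_{n-1}=(-1)^{n-1}$, cancels all cross terms and leaves exactly $2E_n^2-Q_{n-1}^2=-(-1)^n$. Now the $(1,2)$ cofactor is $C_{12}=x^2-z(x+y)$; inserting $x+y=2z+(-1)^n$ and $2x^2=(x+y)^2-(-1)^n$ reduces it to the purely linear expression $C_{12}=(-1)^nz+\tfrac{1-(-1)^n}{2}$. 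Since Lemma~\ref{l:11} says precisely $r_n=b_n-\tfrac{1-(-1)^n}{2}$, this is nothing but $C_{12}=(-1)^nr_n$. For the $(1,1)$ cofactor one checks $C_{11}-C_{12}=(x+y)^2-2x^2=(-1)^n$ (again by the quadratic identity), so $C_{11}=(-1)^n(r_n+1)$, and Corollary~\ref{c:16} in the form $Q_{n-1}=2r_n+1$ converts $r_n+1$ into $Q_{n-1}-r_n$, giving $C_{11}=(-1)^n(Q_{n-1}-r_n)$. Assembling the six cofactors and using symmetry then yields the stated expression for $u^{-n}$.

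The main obstacle is exactly this $(1,1)$/$(1,2)$ block: because these cofactors are quadratic, no purely linear recurrence can simplify them, and the decisive realization is that Lemma~\ref{l:7} delivers precisely the quadratic relation $2E_n^2=Q_{n-1}^2-(-1)^n$ that cancels the $E_n^2$ term. After that cancellation the remaining parity bookkeeping is handled uniformly, with no case distinction on $n$, through the single formula $r_n=b_n-\tfrac{1-(-1)^n}{2}$ of Lemma~\ref{l:11} together with the relation $Q_{n-1}=2r_n+1$ of Corollary~\ref{c:16}.
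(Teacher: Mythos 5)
Your proof is correct, but it follows a genuinely different route from the paper's. The paper proceeds by induction on $n$: it verifies the case $n=1$, multiplies the assumed form of $u^{-n}$ by $u^{-1}$, and simplifies the resulting entries using the four linear identities of Lemma~\ref{l:17} ($r_n+E_n=r_{n+1}$, $E_n+Q_{n-1}=E_{n+1}$, $2E_n+Q_{n-1}=Q_n$, $E_n+Q_{n-1}-r_n=Q_n-r_{n+1}$). You instead compute $u^{-n}$ directly as the adjugate of the symmetric matrix $u^n$ of Lemma~\ref{l:5}, using $\det(u^n)=1$, the relation $Q_{n-1}-2b_n=(-1)^n$ from the determinant corollary, the quadratic identity $2E_n^2=Q_{n-1}^2-(-1)^n$, and the $b_n$/$r_n$ parity dictionary of Lemma~\ref{l:11} together with $Q_{n-1}=2r_n+1$ from Corollary~\ref{c:16}. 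Two remarks on the comparison. First, your derivation is constructive: it produces the closed form of $u^{-n}$ rather than merely verifying a guessed answer, at the price of needing a quadratic (Cassini-type) identity where the paper's induction gets by with linear ones. Second, a point worth making explicit if this proof were adopted: the identity $2E_n^2-Q_{n-1}^2=(-1)^{n-1}$ appears in the paper only afterwards, as Corollary~\ref{c:23}, \emph{deduced from} the present lemma; you correctly avoid circularity by rederiving it independently via the substitution $E_{n-1}=Q_{n-1}-E_n$ (Lemma~\ref{l:13}) into Lemma~\ref{l:7}, and all the other ingredients you invoke (Lemmas~\ref{l:5}, \ref{l:7}, \ref{l:11}, \ref{l:13}, the determinant corollary, and Corollary~\ref{c:16}) are established before this lemma, so the logical order remains acyclic. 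Your cofactor computations check out, including the reduction $C_{11}-C_{12}=(x+y)^2-2x^2=(-1)^n$ and the identification $(-1)^n(r_n+1)=(-1)^n(Q_{n-1}-r_n)$.
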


\begin{proof}
By the principle of mathematical induction, When $n = 1$, 
\[
u^{-1} = 
\begin{pmatrix}
-1   &  0 &  1 \\
 0   & -1 &  1 \\
 1   &  1 & -1
\end{pmatrix}
\]
so the result is true. Now, assume it is true for an arbitrary positive integer $n$: 
\[
u^{-n} = 
\begin{pmatrix} \vspace{0.2cm}
(-1)^{n}(Q_{n-1}-r_{n})  & (-1)^{n}r_{n} &  (-1)^{n-1}E_{n} \\ \vspace{0.2cm}
 (-1)^{n}r_{n}  & (-1)^{n}(Q_{n-1}-r_{n}) &  (-1)^{n-1}E_{n} \\
(-1)^{n-1}E_{n}  &  (-1)^{n-1}E_{n} & (-1)^{n}Q_{n-1}
\end{pmatrix}
\]
Then we compute the product \( u^{-n}u^{-1} \) using the given matrices:
\[
u^{-n}u^{-1}  = 
\begin{pmatrix} \vspace{0.2cm}
(-1)^{n}(Q_{n-1}-r_{n})  & (-1)^{n}r_{n} &  (-1)^{n-1}E_{n} \\ \vspace{0.2cm}
 (-1)^{n}r_{n}  & (-1)^{n}(Q_{n-1}-r_{n}) &  (-1)^{n-1}E_{n} \\
(-1)^{n-1}E_{n}  &  (-1)^{n-1}E_{n} & (-1)^{n}Q_{n-1}
\end{pmatrix}
\begin{pmatrix}
-1   &  0 &  1 \\
 0   & -1 &  1 \\
 1   &  1 & -1
\end{pmatrix}
\]
Carrying out the matrix multiplication, we obtain:
{\small
\[
u^{-n}u^{-1}  = 
\begin{pmatrix} \vspace{0.2cm}
(-1)^{n+1}(E_{n}+Q_{n-1}-r_{n})  & (-1)^{n+1}(r_{n}+E_{n}) &  (-1)^{n}(E_{n}+Q_{n-1}) \\ \vspace{0.2cm}
 (-1)^{n+1}(r_{n}+E_{n})  & (-1)^{n+1}(E_{n}+Q_{n-1}-r_{n}) &  (-1)^{n}(E_{n}+Q_{n-1}) \\
(-1)^{n}(E_{n}+Q_{n-1})  &  (-1)^{n}(E_{n}+Q_{n-1}) & (-1)^{n+1}(2E_{n}+Q_{n-1})
\end{pmatrix}
\]} 
Recall that the sequences \( (E_n) \), \( (Q_n) \) and \( (r_n) \) satisfies the following relations, see Lemma~\ref{l:17}.
\begin{align*}       
 r_n + E_{n}  & =r_{n+1}, \\
 E_n + Q_{n-1}& =E_{n+1,} \\
 2E_n + Q_{n-1} &=Q_{n},  \\
 E_{n}+Q_{n-1}-r_{n} &=Q_{n}-r_{n+1}. 
\end{align*}
Substituting into the matrix, we get:

\[
u^{-n}u^{-1}  = 
\begin{pmatrix} \vspace{0.2cm}
(-1)^{n+1}(Q_{n}-r_{n+1})  & (-1)^{n+1}r_{n+1} &  (-1)^{n}E_{n+1} \\ \vspace{0.2cm}
 (-1)^{n+1}r_{n+1}  & (-1)^{n+1}(Q_{n}-r_{n+1}) &  (-1)^{n}E_{n+1} \\
(-1)^{n}E_{n+1}  &  (-1)^{n}E_{n+1} & (-1)^{n+1}Q_{n}
\end{pmatrix}
\]
Hence,
\[
u^{-n}u^{-1} = u^{-(n+1)}
\]
This completes the proof.
\end{proof}

\begin{cor}\label{c:23} 
For all positive integer \( n \geq 1 \), following equalities hold:
\begin{align}
Q^2_{n-1} - (b_n + r_n)Q_{n-1}+2b_n r_n - E_n^2 &= (-1)^{n},\label{id:15} \\
E_n^2 -(b_n + r_n) Q_{n-1}+2b_n r_n  &= 0,\label{id:16} \\
2E_n^2 - Q^2_{n-1}                   &= (-1)^{n-1}.\label{id:17}  
\end{align}

\end{cor}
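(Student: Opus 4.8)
The plan is to exploit the relation $u^{n}u^{-n}=I_{3}$, where $I_{3}$ is the $3\times 3$ identity matrix. Both $u^{n}$ (Lemma~\ref{l:5}) and $u^{-n}$ are already known explicitly, so every entry of the product $u^{n}u^{-n}$ is a quadratic expression in $E_{n}$, $Q_{n-1}$, $b_{n}$, and $r_{n}$; equating these entries with those of $I_{3}$ yields the three claimed identities directly, with no induction required.

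First I would rewrite the matrix $u^{n}$ from Lemma~\ref{l:5} in terms of $Q_{n-1}$ by substituting $E_{n}+E_{n-1}=Q_{n-1}$ (Lemma~\ref{l:13}), obtaining
\[
u^{n}=\begin{pmatrix} Q_{n-1}-b_{n} & b_{n} & E_{n} \\ b_{n} & Q_{n-1}-b_{n} & E_{n} \\ E_{n} & E_{n} & Q_{n-1} \end{pmatrix}.
\]
Then I would carry out the product with the given expression for $u^{-n}$. Factoring out the common sign (using $(-1)^{n-1}=-(-1)^{n}$), the $(1,1)$ entry becomes
\[
(-1)^{n}\big[(Q_{n-1}-b_{n})(Q_{n-1}-r_{n})+b_{n}r_{n}-E_{n}^{2}\big],
\]
and expanding the first product collapses this to $(-1)^{n}\big[Q_{n-1}^{2}-(b_{n}+r_{n})Q_{n-1}+2b_{n}r_{n}-E_{n}^{2}\big]$. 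Since this must equal the $(1,1)$ entry of $I_{3}$, namely $1$, multiplying through by $(-1)^{n}$ gives identity~\eqref{id:15}. Likewise, the $(1,2)$ entry equals
\[
(-1)^{n}\big[(b_{n}+r_{n})Q_{n-1}-2b_{n}r_{n}-E_{n}^{2}\big],
\]
which must vanish, yielding identity~\eqref{id:16} after a sign change.

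For the last identity I would read off the $(3,3)$ entry of the product, which equals $(-1)^{n-1}\big(2E_{n}^{2}-Q_{n-1}^{2}\big)$; setting this equal to $1$ gives~\eqref{id:17} at once. Equivalently, subtracting~\eqref{id:16} from~\eqref{id:15} eliminates the $b_{n}$ and $r_{n}$ terms and leaves $Q_{n-1}^{2}-2E_{n}^{2}=(-1)^{n}$, i.e.\ \eqref{id:17}. There is no genuine obstacle here beyond careful bookkeeping: the only points demanding attention are keeping the parity factors $(-1)^{n}$ and $(-1)^{n-1}$ straight when forming each dot product, and correctly expanding the cross terms so that the $b_{n}r_{n}$ contributions combine into $2b_{n}r_{n}$. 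The symmetry of both matrices means the remaining entries produce no new information, serving only as a consistency check.
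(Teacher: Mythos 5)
Your proposal is correct and follows essentially the same route as the paper: both form the product $u^{n}u^{-n}=I_{3}$ using the explicit formulas for $u^{n}$ (Lemma~\ref{l:5}) and $u^{-n}$, and read off the identities from the $(1,1)$, $(1,2)$, and $(3,3)$ entries. The only cosmetic difference is that you substitute $E_{n}+E_{n-1}=Q_{n-1}$ into $u^{n}$ before multiplying, whereas the paper applies that relation after computing the entries; the resulting algebra and sign bookkeeping are identical.
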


\begin{proof}
Let \( U \) be the Pell matrix defined by
\[
U = \begin{pmatrix} 
                    0   &  1   &  1 \\ 
                    1   &  0   &  1 \\ 
                    1   &  1   &  1 
\end{pmatrix}.
\]
Then, the inverse of the matrix \( U^n \) is given by
\[
U^{-n} = 
\begin{pmatrix} \vspace{0.2cm}
(-1)^{n}(Q_{n-1}-r_{n})  & (-1)^{n}r_{n} &  (-1)^{n-1}E_{n} \\ \vspace{0.2cm}
(-1)^{n}r_{n}  & (-1)^{n}(Q_{n-1}-r_{n}) &  (-1)^{n-1}E_{n} \\ 
(-1)^{n-1}E_{n}  &  (-1)^{n-1}E_{n} & (-1)^{n}Q_{n-1}
\end{pmatrix}.
\]
Here \( Q_0 = 1 \), \( Q_1 = 3 \), and \( Q_n = 2 Q_{n-1} + Q_{n-2} \), which is the generalized Pell sequence.  
Moreover, \( r_n \) satisfies the recurrence \( r_n = 2 r_{n-1} + r_{n-2} + 1 \), with initial conditions \( r_0 = r_1 = 0 \) and \( r_2 = 1 \).

\quad

On the other hand, we have defined \( U^n \) as follows: 

\[
U^{n} =  \begin{pmatrix} \vspace{0.2cm}
                   E_{n}+E_{n-1}- b_{n} & b_{n} & E_{n} \\ \vspace{0.2cm}
                   b_{n} & E_{n}+E_{n-1}-b_{n} & E_{n} \\ 
                   E_{n} & E_{n} & E_{n}+E_{n-1} 
\end{pmatrix}.
\]   

\quad

The product \( U^n U^{-n} \) yields the following \( 3 \times 3 \) matrix:

{\small
\[
U^{n}U^{-n} =
\begin{bmatrix}
A  & B  & C \\ 
B  & A  & C \\ 
C  & C  & D
\end{bmatrix},
\]
}
where
\[
\begin{aligned}
A &= (-1)^n (E_n + E_{n-1} - b_n)(Q_{n-1} - r_n) + (-1)^n b_n r_n + (-1)^{n-1} E_n^2, \\ 
B &= (-1)^{n-1} E_n^2 + (-1)^n (E_n + E_{n-1} - b_n)r_n + (-1)^n (Q_{n-1} - r_n)b_n, \\ 
C &= (-1)^{n-1}(E_n + E_{n-1} - Q_{n-1})E_n , \\ 
D &= 2(-1)^{n-1} E_n^2 + (-1)^n (E_n + E_{n-1}) Q_{n-1}.
\end{aligned}
\]
On the other hand, we know that
\[
U^n U^{-n} = 
 \begin{pmatrix} \vspace{0.2cm}
                  1  & 0 & 0 \\ \vspace{0.2cm}
                  0  & 1 & 0 \\ 
                  0  & 0 & 1 
\end{pmatrix}.
\]
Therefore, by equating the two matrices obtained through matrix multiplication and applying the relation from Lemma~\ref{l:13},  
\[
E_{n} + E_{n-1} = Q_{n-1},
\]  
we establish the identities stated in the corollary.
\end{proof}

\subsection{Diagonalization of the Generating Matrix and Binet’s Formula}

\begin{thm}
Let $n$ be an integer. The  Binet formula of the sequence $b_{n}$ is
\begin{align}
b_{n} &= \dfrac{(-1)^{n+1}}{2}+\dfrac{1}{4}\left[ (1+\sqrt{2})^{n} + (1-\sqrt{2})^{n}\right], \\
b_{n}  &= \dfrac{(-1)^{n+1}+Q_{n-1}}{2}, \\ 
E_{n}+E_{n-1}-b_{n} &= \dfrac{(-1)^{n}+Q_{n-1}}{2} . 
\end{align}
\end{thm}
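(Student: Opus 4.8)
The plan is to prove the middle identity $b_n = \frac{(-1)^{n+1}+Q_{n-1}}{2}$ first, since the remaining two formulas then fall out almost immediately. The most economical route to this identity is arithmetic, using the two auxiliary sequences already on hand: Lemma~\ref{l:11} gives $b_n = r_n$ when $n$ is even and $b_n = r_n+1$ when $n$ is odd, while Corollary~\ref{c:16} states $Q_n = 2r_{n+1}+1$, so that $Q_{n-1} = 2r_n+1$. I would split into the two parity cases. If $n$ is even, then $(-1)^{n+1}=-1$ and $2b_n = 2r_n = (2r_n+1)-1 = Q_{n-1}+(-1)^{n+1}$; if $n$ is odd, then $(-1)^{n+1}=1$ and $2b_n = 2r_n+2 = (2r_n+1)+1 = Q_{n-1}+(-1)^{n+1}$. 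In both cases $2b_n = Q_{n-1}+(-1)^{n+1}$, which is exactly the second formula.

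With the middle identity established, the first formula follows by substituting the closed form $Q_{n-1} = \frac{1}{2}\bigl[(1+\sqrt{2})^{n}+(1-\sqrt{2})^{n}\bigr]$ from identity~\ref{i:2} into $b_n = \frac{(-1)^{n+1}+Q_{n-1}}{2}$, giving $b_n = \frac{(-1)^{n+1}}{2}+\frac{1}{4}\bigl[(1+\sqrt{2})^{n}+(1-\sqrt{2})^{n}\bigr]$ after a one-line simplification. For the third formula I would invoke Lemma~\ref{l:13}, which yields $E_n+E_{n-1}=Q_{n-1}$, and subtract the second formula: $E_n+E_{n-1}-b_n = Q_{n-1}-\frac{(-1)^{n+1}+Q_{n-1}}{2} = \frac{Q_{n-1}-(-1)^{n+1}}{2} = \frac{Q_{n-1}+(-1)^{n}}{2}$, using $-(-1)^{n+1}=(-1)^{n}$. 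This matches the claimed expression, and it is also the $(1,1)$ entry of $u^n$ in Lemma~\ref{l:5}, which provides a sanity check.

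As an alternative consistent with the title of this subsection, one could instead diagonalize the symmetric matrix $u$ of Lemma~\ref{l:5}. Its characteristic polynomial factors as $(\lambda+1)(\lambda^{2}-2\lambda-1)$, so the eigenvalues are $-1$, $1-\sqrt{2}$, and $1+\sqrt{2}$ (a quick check against $\operatorname{Tr}(u)=1$ and $\det(u)=1$ confirms this). Writing $u^n = P D^n P^{-1}$ then forces every entry of $u^n$, and in particular the $(1,2)$ entry $b_n$, to be a fixed linear combination of $(-1)^{n}$, $(1-\sqrt{2})^{n}$, and $(1+\sqrt{2})^{n}$; matching the initial values $b_0,b_1,b_2$ pins the coefficients to $-\frac{1}{2},\frac{1}{4},\frac{1}{4}$, which recovers the first formula directly, after which identity~\ref{i:2} again converts it to the $Q_{n-1}$ form.

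I expect no genuine obstacle along the arithmetic route: the only bookkeeping is the parity split, and all the substantive content has already been packaged into Lemmas~\ref{l:11},~\ref{l:13} and Corollary~\ref{c:16}. In the diagonalization alternative, the one mildly delicate point is computing $P^{-1}$ correctly and verifying that the coefficients of $(1-\sqrt{2})^{n}$ and $(1+\sqrt{2})^{n}$ coincide, so that the irrational contributions combine into the symmetric expression $\frac{1}{4}\bigl[(1+\sqrt{2})^{n}+(1-\sqrt{2})^{n}\bigr]$ rather than leaving a stray $\sqrt{2}$ term; this symmetry is guaranteed a priori because $b_n$ is an integer for every $n$.
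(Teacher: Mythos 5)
Your primary, arithmetic route is correct, and it is genuinely different from the paper's. The paper proves all three formulas at once by diagonalizing the symmetric matrix $u$ of Lemma~\ref{l:5}: it computes the eigenvalues $-1$, $1-\sqrt{2}$, $1+\sqrt{2}$ with their eigenvectors, forms $P$, $P^{-1}$ and $D^n$, and reads the identities off as the entries of $U^n = PD^nP^{-1}$, using $(1\pm\sqrt{2})^n = Q_{n-1}\pm E_n\sqrt{2}$ to convert between the eigenvalue form and the $Q_{n-1}$ form. Your sketched alternative is essentially that argument. What you actually carry out instead is a short derivation of the middle identity from Lemma~\ref{l:11} ($b_n=r_n$ or $r_n+1$ by parity) together with Corollary~\ref{c:16} ($Q_{n-1}=2r_n+1$), after which the first formula is immediate from the Binet formula~\ref{i:2} for $Q_{n-1}$ and the third from $E_n+E_{n-1}=Q_{n-1}$ (Lemma~\ref{l:13}); I checked that none of these ingredients depends on the theorem itself, so there is no circularity. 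Your route is shorter and avoids computing $P^{-1}$, at the cost of leaning on the chain of auxiliary identities already built up for $r_n$ and $b_n$; the paper's diagonalization is self-contained relative to Lemma~\ref{l:5}, produces all three entries of $U^n$ in one computation, and is what justifies the paper's phrasing ``let $n$ be an integer'' (your argument, like the recurrences themselves, naturally covers $n\geq 1$). Both are valid proofs of the stated identities.
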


\begin{proof}
Let the matrix $U$ be as in Lemma~\ref{l:5}. If we calculate the eigenvalues and eigenvectors of the matrix $U$ are
\[
\lambda_1=-1, \quad \lambda_2=1-\sqrt{2}, \quad \lambda_3=1+\sqrt{2}
\]
and

\[v_1=
\begin{pmatrix}
-1 \\ \\
 1 \\  \\
 0
\end{pmatrix}, \, v_2=
\begin{pmatrix}
-\dfrac{\sqrt{2}}{2} \\ \\
-\dfrac{\sqrt{2}}{2} \\  \\
1
\end{pmatrix}, \, v_3=
\begin{pmatrix}
\dfrac{\sqrt{2}}{2} \\ \\
\dfrac{\sqrt{2}}{2} \\ \\
1
\end{pmatrix}
\]
respectively. Then, we can diagonalize of the matrix $U$ by
\[
D = P^{-1}UP
\]
where
\[
P = (v_1, v_2, , v_3) = 
\begin{pmatrix}
-1 & \dfrac{\lambda_2-1}{2} & \dfrac{\lambda_3-1}{2}  \\ \\
1 & \dfrac{\lambda_2-1}{2} & \dfrac{\lambda_3-1}{2}  \\ \\
0 & 1 & 1
\end{pmatrix},
\quad 
P^{-1} = 
\begin{pmatrix}
-\dfrac{1}{2}  & \dfrac{1}{2}  & 0 \\ \\ 
\dfrac{\lambda_2-1}{4} & \dfrac{\lambda_2-1}{4} & \dfrac{1}{2} \\ \\ \\
\dfrac{\lambda_3-1}{4}  & \dfrac{\lambda_3-1}{4} & \dfrac{1}{2}
\end{pmatrix}
\]
and
\[
D = \text{diag}(\lambda_1,\lambda_2,,\lambda_3) =
\begin{pmatrix}
-1 & 0 & 0 \\
0 & 1-\sqrt{2} & 0 \\
0 & 0 & 1+\sqrt{2}
\end{pmatrix}.
\]
From the properties of the similar matrices, we can write 
\begin{equation}\label{i:37}
U^n = P D^n P^{-1}
\end{equation}
where $n$ is any integer and
\[
D^n = 
\begin{pmatrix}
(-1)^{n} & 0 & 0 \\  \vspace{0.2cm}
0 & (1-\sqrt{2})^{n} & 0 \\
0 & 0 & (1+\sqrt{2})^{n}
\end{pmatrix}=\begin{pmatrix}
(-1)^{n} & 0 & 0 \\
0 & Q_{n-1}-E_{n}\sqrt{2} & 0 \\
0 & 0 & Q_{n-1} + E_{n}\sqrt{2} 
\end{pmatrix}.
\]
By equation~\ref{i:37}, we get
\[
U^{n}
=
\begin{pmatrix}
E_{n} + E_{n-1} - b_{n} & b_{n} & E_{n} \\
b_{n} & E_{n} + E_{n-1} - b_{n} & E_{n} \\
E_{n} & E_{n} & E_{n} + E_{n-1}
\end{pmatrix}
=
\]

\[
\renewcommand{\arraystretch}{1.4}
\begin{bmatrix}
\dfrac{(-1)^{n}+Q_{n-1}}{2} & \dfrac{(-1)^{n+1}+Q_{n-1}}{2} & E_{n} \\[0.2cm]
\dfrac{(-1)^{n+1}+Q_{n-1}}{2} & \dfrac{(-1)^{n}+Q_{n-1}}{2} & E_{n} \\[0.2cm]
E_{n} & E_{n} & Q_{n-1}
\end{bmatrix}
\]

\[
=
\begin{pmatrix}
\dfrac{(-1)^{n}}{2}+\dfrac{1}{4}\!\left(\lambda_2^{n}+\lambda_3^{n}\right) 
& \dfrac{(-1)^{n+1}}{2}+\dfrac{1}{4}\!\left(\lambda_2^{n}+\lambda_3^{n}\right) 
& \dfrac{1}{2\sqrt{2}}\!\left( \lambda_3^{n} - \lambda_2^{n}  \right) \\[0.4cm]
\dfrac{(-1)^{n+1}}{2}+\dfrac{1}{4}\!\left(\lambda_2^{n}+\lambda_3^{n}\right) 
& \dfrac{(-1)^{n}}{2}+\dfrac{1}{4}\!\left(\lambda_2^{n}+\lambda_3^{n}\right) 
& \dfrac{1}{2\sqrt{2}}\!\left( \lambda_3^{n} - \lambda_2^{n}  \right) \\[0.4cm]
\dfrac{1}{2\sqrt{2}}\!\left( \lambda_3^{n} - \lambda_2^{n}  \right) 
& \dfrac{1}{2\sqrt{2}}\!\left( \lambda_3^{n} - \lambda_2^{n}  \right) 
& \dfrac{1}{2}\!\left( \lambda_3^{n} + \lambda_2^{n}  \right)
\end{pmatrix}.
\]
Thus, the proof is completed. 

\end{proof}

\section{Recurrences Generated by Binary $3 \times 3$ Matrices with Determinant Minus One}
Consider the generating matrix
\[
U = \begin{pmatrix}1 & 0 & 1\\ 0 & 1 & 1\\ 1 & 1 & 1\end{pmatrix}.
\]
In this section, we analyze the Pell \textit{U}-matrix with determinant \(-1\), which provides a matrix representation of Pell numbers. This matrix is employed to compute powers \(U^n\), determinants, inverses, and Cassini-type identities; to derive the characteristic roots and the Binet formulas for the sequences \(a_n\) and \(r_n\); to establish identities involving the Pell sequence in conjunction with the generalized Pell sequence and \(b_n\); and to deduce summation formulas for the Pell numbers as well as for the sequences \(a_n\) and \(r_n\).

\subsection{The Matrix Representation of Recurrences and Their Identities}

\begin{lem}\label{l:24} 
Let \(a_n \) be the sequence defined by the recurrence relation
\[
a_n = 3a_{n-1} - a_{n-2} - a_{n-3}, 
\quad a_0 = 1, \ a_1 = 1, \ a_2 = 2,
\]
and let \(r_n \) be the sequence defined by
\[
r_n = 2r_{n-1} + r_{n-2} + 1, 
\quad r_0 = 0, \ r_1 = 0, \ r_2 = 1.
\]
Then, for all \( n \geq 0 \),
\[
a_{n} = r_{n} + 1.
\]
\end{lem}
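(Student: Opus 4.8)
The plan is to establish the identity by strong (complete) induction on $n$, since the recurrence defining $a_n$ reaches back three steps. First I would verify the three base cases directly against the stated initial data: $a_0 = 1 = r_0 + 1$, $a_1 = 1 = r_1 + 1$, and $a_2 = 2 = r_2 + 1$. All three hold, which is exactly the number of base cases needed to launch a third-order recurrence.

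For the inductive step I would fix $n \ge 2$, assume $a_k = r_k + 1$ for every $k \le n$, and expand $a_{n+1}$ via its defining recurrence, substituting the induction hypothesis term by term:
\[
a_{n+1} = 3a_n - a_{n-1} - a_{n-2} = 3(r_n+1) - (r_{n-1}+1) - (r_{n-2}+1) = 3r_n - r_{n-1} - r_{n-2} + 1.
\]
The remaining task is to recognize the right-hand side as $r_{n+1} + 1$, which is where the $r$-recurrence must be invoked.

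The key manipulation is to feed the relation $r_n = 2r_{n-1} + r_{n-2} + 1$ back into the expression $3r_n - r_{n-1} - r_{n-2}$. Writing it as $2r_n + (r_n - r_{n-1} - r_{n-2})$ and using $r_n - r_{n-2} = 2r_{n-1} + 1$, the parenthesized part collapses to $r_{n-1} + 1$, so that $3r_n - r_{n-1} - r_{n-2} = 2r_n + r_{n-1} + 1 = r_{n+1}$. Hence $a_{n+1} = r_{n+1} + 1$, completing the induction.

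I do not anticipate any genuine obstacle here: the argument is a textbook strong induction. The only point demanding care is the faithful bookkeeping of the constant $+1$ that each substitution contributes, together with the correct placement of the $r$-recurrence so as to cancel the $r_{n-2}$ term cleanly. Everything beyond that is elementary arithmetic.
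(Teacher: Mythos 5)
Your proof is correct and follows essentially the same route as the paper's: induction on $n$, substituting the hypothesis into the third-order recurrence for $a_n$ and then invoking the recurrence $r_{n+1}=2r_n+r_{n-1}+1$ to collapse $3r_n-r_{n-1}-r_{n-2}$ to $r_{n+1}$. If anything, your version is slightly tidier, since you explicitly verify the three base cases $n=0,1,2$ that a third-order recurrence requires, whereas the paper only records $n=0$.
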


\begin{proof}
We will use the principle of mathematical induction (PMI). When $n = 0$,
\[
a_{0} = r_{0} + 1=1.
\]
so the result is true. We assume it is true for any positive integer $n = k$:
\[
a_{k} = r_{k} + 1.
\] 
Now, we show that it is true for $n = k + 1$. Using the recurrence relation for \(a_{k}\), we can write
\begin{align*}
a_{k+1} &= 3a_{k} - a_{k-1} - a_{k-2} \\   
        &= 3(r_{k}+1) - (r_{k-1}+1) - (r_{k-2}+1) \quad\text{(by the induction hypothesis)} \\   
        &= 3r_{k} - r_{k-1} - r_{k-2}+1 \\ 
        &= 3(2r_{k-1} + r_{k-2} + 1) - r_{k-1} - r_{k-2}+1 \quad\text{(by the recurrence for $r_{k}$) }\\  
        &= 5r_{k-1} + 2r_{k-2} + 4 \\   
        &= (4r_{k-1} + 2r_{k-2} + 2) + r_{k-1}+2 \\  
        &= 2(2r_{k-1} + r_{k-2} + 1) + r_{k-1}+2 \\ 
        &= 2 r_{k} + r_{k-1}+ 2  \quad\text{(by the recurrence for $r_{k}$) } \\ 
        &= (2 r_{k} + r_{k-1}+ 1) + 1 \\                                     
        &= r_{k+1} + 1.                                
\end{align*}
and the result follows by the principle of mathematical induction.
\end{proof}

\begin{lem}\label{l:25} 
Let \( a_n \) be the numbers defined by the recurrence relation
\[
a_n = 3 \, a_{n-1} - a_{n-2} - a_{n-3}, \quad a_0 = 1,\ a_1 = 1, \ a_2 = 2,
\]
and let  \( E_n \) be the Pell numbers defined by the recurrence relation  
\[
E_n = 2E_{n-1} + E_{n-2},
\]
with initial conditions  \( E_0 = 0 \) and \( E_1 = 1 \)
Then, for all \( n \geq 0 \),

\[
a_{n+1} = a_{n} + E_{n} 
\]

\end{lem}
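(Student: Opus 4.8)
The plan is to avoid a fresh induction and instead chain together the two facts already established about the auxiliary sequence $r_n$. First I would apply Lemma~\ref{l:24}, which states $a_n = r_n + 1$ for every $n \geq 0$. Using it at both indices $n+1$ and $n$, the claimed identity $a_{n+1} = a_n + E_n$ becomes equivalent to $r_{n+1} + 1 = (r_n + 1) + E_n$, i.e.\ to $r_{n+1} = r_n + E_n$.

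But this last equality is exactly identity~\ref{id:3} of Lemma~\ref{l:17}, namely $r_n + E_n = r_{n+1}$. So for every index in the range where that identity applies, the desired relation follows by direct substitution, with no computation beyond cancelling the additive constant $1$ supplied by Lemma~\ref{l:24}. In effect the identity $a_{n+1}=a_n+E_n$ is just the recurrence $r_{n+1}=r_n+E_n$ transported through the affine change of variables $a=r+1$.

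The only subtlety, and the single point I would be careful about, is the range of indices. Identity~\ref{id:3} is proved for $n \geq 1$, whereas the present lemma is asserted for all $n \geq 0$, so I would dispose of the base case $n = 0$ directly: there $a_1 = 1$, $a_0 = 1$ and $E_0 = 0$, so $a_1 = a_0 + E_0$ holds trivially. For $n \geq 1$ the substitution argument above then finishes the proof. Because both ingredients are already in hand, there is no genuine obstacle here: the whole argument is a two-line reduction, and the main task is simply to line up the index ranges correctly.
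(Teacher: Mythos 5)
Your proposal is correct and follows essentially the same route as the paper: the paper's proof likewise computes $a_n + E_n = (r_n+1) + E_n = r_{n+1} + 1 = a_{n+1}$ by combining Lemma~\ref{l:24} with identity~\ref{id:3} of Lemma~\ref{l:17}. Your explicit handling of the base case $n=0$ (where identity~\ref{id:3} is only stated for $n \geq 1$) is a small point of extra care that the paper omits.
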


\begin{proof}
For all \( n \geq 0 \), we have
\begin{align*}
a_{n} + E_{n} &= (r_{n} + 1) + E_{n} \quad\text{(by the identity for $a_{n}$ ; see Lemma ~\ref{l:24}) )} \\   
                  &= (r_{n} + E_{n}) + 1  \\   
                  &= r_{n+1} + 1 \quad \text{(by the identity ~\ref{id:3}; see Lemma~\ref{l:17})}\\                                                             
                  &= a_{n+1}.\quad\text{(by the identity for $a_{n}$ ; see Lemma ~\ref{l:24}) )}
\end{align*}
Therefore, the formula holds for all \( n \in \mathbb{N} \).
\end{proof}

\begin{cor}\label{c:26} 

Let \( r_n \) be the sequence defined by  
\[
r_n = 2r_{n-1} + r_{n-2} + 1, 
\quad r_0 = 0, \ r_1 = 0, \ r_2 = 1.
\]

Let \( E_n \) denote the Pell sequence defined by the recurrence relation  
\[
E_n = 2E_{n-1} + E_{n-2},
\]
with initial conditions \( E_0 = 0 \) and \( E_1 = 1 \).  

Similarly, let \( a_n \) be the sequence defined by  
\[
a_n = 3a_{n-1} - a_{n-2} - a_{n-3},
\]
with  \( a_0 = 1 \), \( a_1 = 1 \), \( a_2 = 2 \), and \( a_3 = 4 \).  

Then, for all \( n \geq 0 \),
\[
E_{n+1} = E_{n} + r_{n} + a_{n}.
\]
\end{cor}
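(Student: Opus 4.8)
The plan is to prove this purely by chaining the two immediately preceding results, Lemma~\ref{l:24} and Lemma~\ref{l:25}, together with the closed form from Corollary~\ref{l:13a}; no fresh induction is needed, since all the recursive work has already been done. The strategy is to start from the right-hand side $E_n + r_n + a_n$ and transport it, via substitutions, onto the known expression for $E_{n+1}$.

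First I would reassociate the right-hand side so as to expose the combination $a_n + E_n$, because Lemma~\ref{l:25} states precisely that $a_{n+1} = a_n + E_n$. This gives
\[
E_n + r_n + a_n = r_n + (a_n + E_n) = r_n + a_{n+1}.
\]
Next I would eliminate the $a$-term in favour of the $r$-sequence using Lemma~\ref{l:24}, which asserts $a_m = r_m + 1$ for every index $m$; in particular $a_{n+1} = r_{n+1} + 1$, so that
\[
r_n + a_{n+1} = r_n + r_{n+1} + 1.
\]
Finally I would recognize $r_{n+1} + r_n + 1$ as a closed form for $E_{n+1}$: applying Corollary~\ref{l:13a} with index $n+1$ yields $E_{n+1} = r_{n+1} + r_n + 1$, which closes the chain and establishes $E_{n+1} = E_n + r_n + a_n$.

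I expect no genuine obstacle here, as the argument is entirely a bookkeeping of already-proved identities; the single point demanding care is the \emph{range of validity}. Corollary~\ref{l:13a} is stated only for indices $\ge 2$, so the last substitution is legitimate only when $n+1 \ge 2$, that is $n \ge 1$. The boundary value $n = 0$ therefore has to be verified by hand, but this is immediate: the left-hand side is $E_1 = 1$, while the right-hand side is $E_0 + r_0 + a_0 = 0 + 0 + 1 = 1$. (As an internal consistency check, one may also note that combining Lemma~\ref{l:24} with Lemma~\ref{l:25} reproduces identity~\eqref{id:3}, $r_{n+1} = r_n + E_n$, which is exactly the relation implicitly used when rewriting $r_n + r_{n+1} + 1$, so the derivation is self-consistent.) Thus the only place an error could slip in is the index accounting at the base case, and once that is dispatched the identity holds for all $n \ge 0$.
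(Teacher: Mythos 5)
Your proof is correct and follows essentially the same route as the paper: both arguments reduce the right-hand side to $r_{n+1}+r_n+1$ (you via Lemma~\ref{l:25}, the paper via identity~\eqref{id:3} directly, which amounts to the same substitutions) and then invoke Corollary~\ref{l:13a} at index $n+1$. Your explicit check of the boundary case $n=0$, where Corollary~\ref{l:13a} does not apply, is a small point of care that the paper's proof omits.
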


\begin{proof}
Observe that
\begin{align*}
E_{n} + r_{n}+ a_{n} &= (E_{n} + r_{n})+ a_{n}  \\   
                  &= r_{n+1}+ a_{n} \quad \text{(by the identity ~\ref{id:3} for $r_{n+1}$; see Lemma~\ref{l:17})}  \\   
                  &= r_{n+1}+ r_{n} + 1 \quad\text{(by the identity for $a_{n}$ ; see Lemma ~\ref{l:24}) )}\\                                                             
                  &= E_{n+1}\quad \text{(by the identity for $E_{n}$; see Corollary ~\ref{l:13a})}.
\end{align*}
Therefore, the formula holds for all \( n \in \mathbb{N} \).
\end{proof}
The following lemma can be proven by mathematical induction.
\begin{lem}\label{l:27} 
Let $u$ be the symmetric matrix
\[
u = \begin{pmatrix} 
                    1   &  0   &  1 \\
                    0   &  1   &  1 \\
                    1   &  1   &  1\\
\end{pmatrix}.
\]  
Then, 

\[u^{n} =  \begin{pmatrix} \vspace{0.2cm}
                   a_{n}   & r_{n} & E_{n}  \\ \vspace{0.2cm}
                   r_{n}   & a_{n}   & E_{n} \\
                   E_{n}   & E_{n}   & r_{n}+a_{n}  \\
\end{pmatrix}  \]   
where \( n\in \mathbb{Z}^{+} \), and \( E_n \) denotes the Pell numbers, defined by the recurrence relation  
\[
E_n = 2E_{n-1} + E_{n-2},
\]
with initial conditions  \( E_0 = 0 \) and \( E_1 = 1 \). Similarly, \( a_n \) satisfies  
\[
a_n = 3a_{n-1} - a_{n-2} - a_{n-3},
\]
with \( a_0 = 1 \), \( a_1 = 1 \), \( a_2 = 2 \), and \( a_3 = 4 \).  The sequence \( r_n \) is given by  
\[
r_n = 2r_{n-1} + r_{n-2} + 1,
\]
with \( r_0 = 0 \), \( r_1 = 0 \) and \( r_2 = 1 \).

\end{lem}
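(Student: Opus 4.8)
The plan is to argue by induction on $n$, exactly as the remark preceding the statement suggests, and to observe that the entire inductive step collapses onto three scalar identities that were established precisely for this purpose in Lemmas~\ref{l:25} and~\ref{l:17} and Corollary~\ref{c:26}. This is the conceptual point worth stressing: the matrix $u$ is engineered so that its powers package $a_n$, $r_n$, and $E_n$ together, and the three auxiliary recurrences glue these sequences to one another in just the way a single multiplication by $u$ requires.

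First I would dispatch the base case $n=1$. Since $a_1 = 1$, $r_1 = 0$, $E_1 = 1$, and $r_1 + a_1 = 1$, the proposed template reproduces $u$ itself, so the formula holds for $n=1$.

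For the inductive step I would assume the stated form of $u^n$ and compute $u^{n+1} = u^n u$ by direct multiplication against the symmetric generator. Because $u$ has so many $0$'s and $1$'s, every entry of the product reduces to one of only three expressions, and the result is
\[
u^{n+1} =
\begin{pmatrix}
a_n + E_n & r_n + E_n & a_n + r_n + E_n \\
r_n + E_n & a_n + E_n & a_n + r_n + E_n \\
E_n + r_n + a_n & E_n + r_n + a_n & 2E_n + r_n + a_n
\end{pmatrix}.
\]
To identify this with the template at index $n+1$, I would match entries type by type:
\begin{align*}
a_{n+1} &= a_n + E_n && \text{(by Lemma~\ref{l:25})}, \\
r_{n+1} &= r_n + E_n && \text{(by identity~\ref{id:3} in Lemma~\ref{l:17})}, \\
E_{n+1} &= E_n + r_n + a_n && \text{(by Corollary~\ref{c:26})}.
\end{align*}
The only entry not immediately of this form is the $(3,3)$ position, $2E_n + r_n + a_n$, but adding the first two matched identities gives $a_{n+1} + r_{n+1} = (a_n + E_n) + (r_n + E_n) = r_n + a_n + 2E_n$, which is exactly what is required. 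This closes the induction.

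I do not expect a genuine obstacle here: the content of the lemma is entirely front-loaded into the three preceding results, and what remains is a routine $3\times 3$ multiplication together with bookkeeping. The only place demanding mild care is confirming the $(3,3)$ entry, since it is the one position where the target $r_{n+1}+a_{n+1}$ must be reconstructed as a \emph{sum} of two of the recurrence identities rather than read off directly; once that is noted, the verification is immediate.
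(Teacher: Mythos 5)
Your proof is correct and follows essentially the same route as the paper's: induction on $n$, a direct computation of $u^n u$, and an appeal to the identities $a_{n+1}=a_n+E_n$ (Lemma~\ref{l:25}), $r_{n+1}=r_n+E_n$ (Lemma~\ref{l:17}), and $E_{n+1}=E_n+r_n+a_n$ (Corollary~\ref{c:26}) to close the inductive step. The only cosmetic differences are that the paper starts the induction at $n=2$ rather than $n=1$, and your explicit check that the $(3,3)$ entry equals $r_{n+1}+a_{n+1}$ is a welcome clarification the paper leaves implicit.
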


\begin{proof}
We will use the principle of mathematical induction (PMI). When $n = 2$,
\[
u^{2} = \begin{bmatrix}  
                   a_{2}   & r_{2}   & E_{2}  \\ \vspace{0.2cm}
                   r_{2}   & a_{2}   & E_{2} \\
                   E_{2}   & E_{2}   & r_{2}+a_{2}  \\
\end{bmatrix} 
= 
\begin{bmatrix} 
2 & 1 & 2\\ 
1 & 2 & 2\\  
2 & 2 & 3\\ 
\end{bmatrix}
\]
so the result is true. We assume it is true for any positive integer $n = k$:

\[u^{k} =  \begin{pmatrix} \vspace{0.2cm}
                   a_{k}   & r_{k}   & E_{k}  \\ \vspace{0.2cm}
                   r_{k}   & a_{k}   & E_{k} \\
                   E_{k}   & E_{k}   & r_{k}+a_{k}  \\
\end{pmatrix}\]  
Now, we show that it is true for $n = k + 1$. Then, we can write
\begin{align*}
u^{k+1} = u^k u 
&= \begin{pmatrix} \vspace{0.2cm}
                   a_{k}   & r_{k}   & E_{k}  \\ \vspace{0.2cm}
                   r_{k}   & a_{k}   & E_{k} \\
                   E_{k}   & E_{k}   & r_{k}+a_{k}  \\
\end{pmatrix}
\begin{pmatrix} 
                    1   &  0   &  1 \\
                    0   &  1   &  1 \\
                    1   &  1   &  1\\
\end{pmatrix} \quad\text{(by the induction hypothesis)} \\
&= 
\begin{pmatrix} \vspace{0.2cm}
                   a_{k}+E_{k}         & r_{k}+ E_{k}          & a_{k}+ r_{k}+E_{k}  \\ \vspace{0.2cm}
                   r_{k}+ E_{k}        &  a_{k}+E_{k}          & r_{k}+a_{k}+ E_{k} \\
                   E_{k}+ r_{k}+a_{k}  & E_{k}+ r_{k}+a_{k}    & 2E_{k}+ r_{k}+a_{k} \\
\end{pmatrix}
\end{align*}
Recall that the sequences  satisfies the following relations:  
\begin{align*}
r_{k+1} &= r_{k} + E_{k}  \quad\text{(see Lemma~\ref{l:17})}\\
a_{k+1} &= a_{k} + E_{k}  \quad\text{(see  Lemma~\ref{l:25})}\\
E_{k+1} &= E_{k}+r_{k} +a_{k}  \quad\text{(see Corollary~\ref{c:26})} \\
\end{align*}
Substituting into the matrix, we get:
\begin{align}
u^{k+1} = u^k u &=
\begin{pmatrix} \vspace{0.2cm} 
                   a_{k+1}   & r_{k+1}   & E_{k+1}  \\ \vspace{0.2cm}
                   r_{k+1}   & a_{k+1}   & E_{k+1} \\
                   E_{k+1}   & E_{k+1}   & r_{k+1}+a_{k+1}  \\
\end{pmatrix}
\end{align}
and the result follows by the principle of mathematical induction.
\end{proof}

\begin{cor}\label{c:28}
Let \( u \) be the Pell matrix defined by
\[
u = \begin{pmatrix} 
                    1   &  0   &  1 \\
                    0   &  1   &  1 \\
                    1   &  1   &  1\\
\end{pmatrix}.
\] Then, for every \( n \in \mathbb{N} \), we have that

\[(a_n  + r_{n})^2 - 2E_n^{2} = (-1)^n.\]

\end{cor}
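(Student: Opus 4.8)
The plan is to compute $\det(u^n)$ in two different ways and compare. On one hand, expanding $\det(u)$ along the first row gives $1\cdot(1-1) - 0 + 1\cdot(0-1) = -1$, so by multiplicativity of the determinant $\det(u^n) = (\det u)^n = (-1)^n$. This is the quantity that will appear on the right-hand side of the claimed identity.

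On the other hand, I would evaluate $\det(u^n)$ directly from the closed form established in Lemma~\ref{l:27}, the symmetric matrix with entries $a_n$, $r_n$, $E_n$, and $r_n + a_n$. Writing $a = a_n$, $r = r_n$, $E = E_n$, the cleanest route is a single row operation: subtracting the second row from the first produces the row $(a-r,\ -(a-r),\ 0)$, from which the common factor $(a_n - r_n)$ can be pulled out. Expanding the remaining determinant along this first row leaves $(a+r)(a+r) - 2E^2 = (a_n+r_n)^2 - 2E_n^2$, so that $\det(u^n) = (a_n - r_n)\big((a_n+r_n)^2 - 2E_n^2\big)$.

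Finally, I would invoke Lemma~\ref{l:24}, which gives $a_n = r_n + 1$ and hence $a_n - r_n = 1$. Substituting this into the factorization collapses it to $\det(u^n) = (a_n + r_n)^2 - 2E_n^2$, and equating with the first computation $(-1)^n$ delivers the claimed Cassini-type identity.

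I do not expect a genuine obstacle here; the only step demanding care is the determinant evaluation, where the factor $a_n - r_n$ must be extracted correctly so that the relation $a_n - r_n = 1$ can be applied cleanly. A direct $3\times 3$ cofactor expansion also works but produces a longer intermediate expression $(a^2 - r^2)(a+r) + 2E^2(r-a)$ that one must still factor as $(a-r)\big((a+r)^2 - 2E^2\big)$, which is why the preliminary row reduction is preferable. Everything else is a routine substitution using results already proved.
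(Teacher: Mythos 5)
Your proposal is correct and follows essentially the same route as the paper: both compute $\det(u^n)=(\det u)^n=(-1)^n$ by multiplicativity, evaluate the determinant of the closed form from Lemma~\ref{l:27}, factor out $a_n-r_n$, and apply $a_n-r_n=1$ from Lemma~\ref{l:24}. Your row-reduction shortcut for extracting the factor $(a_n-r_n)$ is a minor streamlining of the paper's full cofactor expansion, but the argument is the same.
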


\begin{proof}
It is easy to see that
\[
\det(u) = -1.
\]
Then, it can be written
\begin{align*}
\det(u^n) &= \det(F) \cdot \det(F) \cdot \dots \cdot \det(F) \\
          &= (-1)^n.
\end{align*}
If \( x = E_n \), \( y = r_{n} \), and \( z = a_n \), then the determinant of the matrix \( u^n \) given in Lemma~\ref{l:5} is

\begin{align*}
\left|
\begin{array}{ccc}
z & y & x \\
y & z & x \\
x & x & y+z
\end{array}
\right|
&= -y^3 + 2x^2y + z^3 + yz^2 - 2x^{2}z - y^{2}z\\
&= z^3-y^3 + (y - z)(2x^{2}-yz)\\
&= a_n ^3-r_{n}^3 + (r_{n} - a_n )(2E_n^{2}-r_{n}a_n )\\
&= a_n ^3-r_{n}^3 - (2E_n^{2}-r_{n}a_n )\\
& \quad \text{(by the identities for $a_n$; see Lemma~\ref{l:24})}\\
&= (a_{n} -r_{n})(a_n ^2 + a_{n}r_n + r_{n}^2) - (2E_n^{2}-r_{n}a_n )\\
&= (a_{n} -r_{n})(a_n ^2 + a_{n}r_n + r_{n}^2) - (2E_n^{2}-r_{n}a_n )\\
&= (a_n ^2 + a_{n}r_n + r_{n}^2) - (2E_n^{2}-r_{n}a_n )\\
&\quad \text{(by the identities for $a_n$; see Lemma~\ref{l:24})}\\
&= (a_n ^2 + 2a_{n}r_n + r_{n}^2) - 2E_n^{2}\\
&= (a_n  + r_{n})^2 - 2E_n^{2}.
\end{align*}
Thus, \[(a_n  + r_{n})^2 - 2E_n^{2} = (-1)^n.\] for all $n\geq 1.$ 
\end{proof}

\begin{cor}\label{c:31}
For $n\geq 1$ and  $m\geq 1$, we have the following identity:
\begin{align}
a_{m+n} & = a_{m}a_{n} + r_{m}r_{n} + E_{m}E_{n},\label{i:20} \\
r_{m+n} & = r_{m}a_{n}+a_{m}r_{n}+E_{m}E_{n} ,\label{i:21}    \\
E_{m+n} & = E_n(r_m + a_{m}) + E_m(r_n + a_{n}) ,\label{i:22} \\
r_{m+n}+a_{m+n} & = 2E_{m} E_n+(r_m + a_{m})(r_n + a_{n}). \label{i:23}
\end{align}
\end{cor}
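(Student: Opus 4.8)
The plan is to exploit the matrix representation from Lemma~\ref{l:27} in exactly the same way as in the analogous corollaries following Lemmas~\ref{l:1} and~\ref{l:5}. Since $u^{m+n} = u^m u^n$, I would write both factors in the closed form
\[
u^m = \begin{pmatrix} a_m & r_m & E_m \\ r_m & a_m & E_m \\ E_m & E_m & r_m + a_m \end{pmatrix},
\qquad
u^n = \begin{pmatrix} a_n & r_n & E_n \\ r_n & a_n & E_n \\ E_n & E_n & r_n + a_n \end{pmatrix},
\]
and compute the product entrywise, then equate with the closed form of $u^{m+n}$, whose entries are $a_{m+n}$, $r_{m+n}$, $E_{m+n}$, and $r_{m+n}+a_{m+n}$ in the respective positions.

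First I would record that both matrices are symmetric with the special block structure shared by all the generators in this paper, so the product $u^m u^n$ is again symmetric of the form
\[
u^m u^n = \begin{bmatrix} A & B & C \\ B & A & C \\ C & C & D \end{bmatrix},
\]
which means only four distinct entries must be computed rather than nine. A direct expansion gives $A = a_m a_n + r_m r_n + E_m E_n$ from the $(1,1)$ position, $B = r_m a_n + a_m r_n + E_m E_n$ from the $(1,2)$ position, $C = E_n(r_m + a_m) + E_m(r_n + a_n)$ from the $(1,3)$ position, and $D = 2E_m E_n + (r_m + a_m)(r_n + a_n)$ from the $(3,3)$ position.

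Equating these entries with the corresponding entries of $u^{m+n}$ then yields the four stated identities: $A = a_{m+n}$ gives~\eqref{i:20}, $B = r_{m+n}$ gives~\eqref{i:21}, $C = E_{m+n}$ gives~\eqref{i:22}, and $D = r_{m+n} + a_{m+n}$ gives~\eqref{i:23}. No auxiliary recurrence manipulation is needed here, since Lemma~\ref{l:27} already encodes the recurrences for $a_n$, $r_n$, and $E_n$ inside the matrix powers; the identities follow purely from associativity of matrix multiplication.

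The proof is essentially routine bookkeeping, so the only real obstacle is verifying the symmetric block form and carrying out the entrywise multiplication without error, in particular keeping track of which position in $u^{m+n}$ matches each product entry. Because $u$ is symmetric, I would double-check that the off-diagonal entry $C$ in rows $1$ and $2$ genuinely coincide (they do, by the repeated structure of the first two rows of $u^m$ and $u^n$), which is what guarantees the product retains the block form and legitimizes reading off all four identities from a single multiplication.
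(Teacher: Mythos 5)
Your proposal is correct and follows exactly the paper's own argument: write $u^{m+n}=u^{m}u^{n}$ using the closed form of Lemma~\ref{l:27}, observe that the product retains the symmetric block shape with four distinct entries $A,B,C,D$, and equate these with the corresponding entries of $u^{m+n}$ to read off the four identities. No further comment is needed.
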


\begin{proof}
For \( m, n \geq 1 \), we know that \( U^{m+n} = U^m U^n \). Since we have defined \( U^n \) as follows, the same expression in matrix form is:

\[
U^m = \begin{pmatrix} \vspace{0.2cm}
                  a_{m} & r_{m} & E_{m} \\ \vspace{0.2cm}
                  r_{m} & a_{m} & E_{m} \\
                  E_{m} & E_{m} & r_{m}+a_{m} \\
\end{pmatrix}, 
\] 

\[
U^{n} =  \begin{pmatrix} \vspace{0.2cm}
                  a_{n} & r_{n} & E_{n} \\ \vspace{0.2cm}
                  r_{n} & a_{n} & E_{n} \\
                  E_{n} & E_{n} & r_{n}+a_{n}
\end{pmatrix} . \]   
The product \( U^m U^n \) is given by the following symmetric \( 3 \times 3 \) matrix:
\[
U^m U^n =
\begin{bmatrix}
A & B & C \\
B & A & C \\
C & C & D
\end{bmatrix}
\]
where:

\[
\begin{aligned}
A &= a_{m}a_{n} + r_{m}r_{n} + E_{m}E_{n}, \\
B &= r_{m}a_{n}+a_{m}r_{n}+E_{m}E_{n}, \\
C &= E_n(r_m + a_{m}) + E_m(r_n + a_{n}), \\
D &= 2E_{m} E_n+(r_m + a_{m})(r_n + a_{n}).
\end{aligned}
\]
On the other hand, we have:

\[
U^{m+n} = 
\begin{pmatrix} \vspace{0.2cm}
                  a_{m+n} & r_{m+n} & E_{m+n} \\ \vspace{0.2cm}
                  r_{m+n} & a_{m+n} & E_{m+n} \\
                  E_{m+n} & E_{m+n} & r_{m+n}+a_{m+n} \\
\end{pmatrix}. 
\]
Equating the two matrices obtained via matrix multiplication yields the identities stated in the corollary.
\end{proof}

\begin{cor}\label{c:32} 
Let \( a_n \) be the numbers defined by the recurrence relation
\[
a_n = 3 \, a_{n-1} - a_{n-2} - a_{n-3}, \quad a_0 = 1,\ a_1 = 1, \ a_2 = 2,
\]
and let  \( Q_n \) be the generalized Pell sequence defined by the recurrence relation  
\[
Q_n = 2Q_{n-1} + Q_{n-2},
\]
with initial conditions  \( Q_0 = 1 \) and \( Q_1 = 3 \)
Then, for all \( n \geq 0 \),
\begin{align}
a_{n} + r_{n} &= Q_{n-1}, \label{i:24}\\ 
E_{n+1}       &= E_{n} + Q_{n-1}, \label{i:25} \quad\textnormal{(appears in \cite{Horadam1994})} \\
a_{n+1}       &= a_{n} + E_{n}. \label{i:26}
\end{align}
\end{cor}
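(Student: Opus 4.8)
The plan is to derive all three identities directly from results already established earlier in this section and the previous one, so that no fresh induction is required; each is essentially a one-line consequence of a prior lemma, and the only real work is tracking indices.

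For identity~\ref{i:24}, namely $a_n + r_n = Q_{n-1}$, I would first eliminate $a_n$ in favor of $r_n$ by invoking Lemma~\ref{l:24}, which gives $a_n = r_n + 1$, so that $a_n + r_n = 2r_n + 1$. It then remains to recognize the right-hand side as $Q_{n-1}$. This is supplied by Corollary~\ref{c:16}, which asserts $Q_n = 2r_{n+1} + 1$; replacing $n$ by $n-1$ yields $Q_{n-1} = 2r_n + 1$, and the two expressions coincide. The one place demanding a little care is precisely this index shift $n \mapsto n-1$, but the recurrence and initial data for $r_n$ guarantee it is valid in the claimed range.

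For identity~\ref{i:25}, $E_{n+1} = E_n + Q_{n-1}$, I would simply cite identity~\ref{id:4} of Lemma~\ref{l:17}, where exactly this relation is proved (and which, as noted there, also appears in \cite{Horadam1994}). For identity~\ref{i:26}, $a_{n+1} = a_n + E_n$, I would invoke Lemma~\ref{l:25}, which is verbatim this statement. Thus both of these require no argument beyond a reference.

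Since every part reduces to a previously proved fact, there is no genuine obstacle here: the proof is a matter of assembling Lemmas~\ref{l:24}, \ref{l:25}, \ref{l:17} and Corollary~\ref{c:16} in the right order. The mildest point of friction is the bookkeeping of indices in the first identity, where one must correctly align the $r_n$-based formula for $Q_{n-1}$ coming from Corollary~\ref{c:16} with the relation $a_n = r_n + 1$.
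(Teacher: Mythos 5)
Your proposal is correct, and for identity~\eqref{i:24} it coincides exactly with the paper's argument (Lemma~\ref{l:24} to get $a_n+r_n=2r_n+1$, then Corollary~\ref{c:16} shifted to $Q_{n-1}=2r_n+1$). For the other two identities you take a shorter route than the paper: you observe that \eqref{i:25} is verbatim identity~\eqref{id:4} of Lemma~\ref{l:17} and that \eqref{i:26} is verbatim Lemma~\ref{l:25}, so a citation suffices, and indeed neither of those earlier results depends on this corollary, so there is no circularity. The paper instead re-derives both inside the determinant-$(-1)$ matrix framework: it obtains \eqref{i:25} from Corollary~\ref{c:26} ($E_{n+1}=E_n+a_n+r_n$) combined with the freshly proved \eqref{i:24}, and it obtains \eqref{i:26} by specializing the product identity~\eqref{i:20} of Corollary~\ref{c:31} to $m=1$, using $a_1=1$, $r_1=0$, $E_1=1$. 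Your version is more economical; the paper's version has the expository benefit of showing that the matrix identities of this section reproduce these relations independently. The only point worth flagging in either treatment is the stated range $n\ge 0$, which at $n=0$ involves $Q_{-1}$ and sits slightly outside the $n\ge 1$ hypothesis of Lemma~\ref{l:17}; this edge case is shared with the paper's own proof and is not a defect you introduced.
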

\begin{proof}
Let \(r_n\) be the sequence defined by
\[
r_n = 2r_{n-1} + r_{n-2} + 1, 
\quad r_0 = 0, \ r_1 = 0, \ r_2 = 1.
\]
For all \(n \geq 0\), we have
\begin{align*}
a_{n} + r_{n} 
    &= (r_{n} + 1) + r_{n} 
       && \text{(by the identity for $a_{n}$; see Lemma~\ref{l:24})} \\   
    &= 2r_{n} + 1 \\   
    &= Q_{n-1} 
       && \text{(by the identity for $Q_{n}$; see Corollary~\ref{c:16})}.
\end{align*}
Therefore, identity~\ref{i:24} holds for all \(n \in \mathbb{N}\). Furthermore,
\begin{align*}
E_{n+1} 
    &= E_{n} + a_{n} + r_{n} 
       && \text{(by the identity for $E_{n+1}$; see Corollary~\ref{c:26})} \\   
    &= E_{n} + Q_{n-1} 
       && \text{(by identity~\ref{i:24} for $Q_{n-1}$)}.
\end{align*}
Thus,
\[
E_{n+1} = E_{n} + Q_{n-1}.
\]
In addition, we obtain
\begin{align*}
a_{n+1} 
    &= a_{n}a_{1} + r_{n}r_{1} + E_{n}E_{1} 
       && \text{(by identity~\ref{i:20} for $a_{n+m}$; see Corollary~\ref{c:31})} \\   
    &= a_{n} + E_{n} 
       && \text{(using identity~\ref{i:24} for $Q_{n-1}$)}.
\end{align*}
Therefore,
\[
a_{n+1} = a_{n} + E_{n}.
\]
\end{proof}

\subsection{Inverse Powers of the Generating Matrix and Identities for Recurrence Relations}

\begin{lem}\label{l:29} 
Let \( E_n \) be the Pell numbers defined by the recurrence
\[
E_n = 2E_{n-1} + E_{n-2}, \quad E_0 = 0,\ E_1 = 1,
\]
and let \( b_n \) be the sequence defined by
\[
b_n = b_{n-1} + 3b_{n-2} + b_{n-3}, \quad b_0 = 0,\ b_1 = 1,\ b_2 = 1.
\]
Then, for all \( n \geq 1 \),
\begin{align}       
 b_{n}+E_{n}      &= (-1)^{n+1} + b_{n+1} \label{i:9},  \\
 2\,b_{n}+E_{n}   &= (-1)^{n+1} + E_{n+1}\label{i:10}.
\end{align}
\end{lem}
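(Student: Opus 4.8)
The plan is to observe that both identities are consequences of a single auxiliary relation for the $b$-sequence,
\[
b_{n+1} - 2b_n - b_{n-1} = (-1)^{n} \qquad (n \ge 1),
\]
and that once this is available the Pell numbers can be removed using the relation $E_n = b_n + b_{n-1}$ of Lemma~\ref{l:4}. So I would first establish the displayed auxiliary relation and then read off \eqref{i:9} and \eqref{i:10} from it.

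To prove the auxiliary relation I would use a one-step induction built directly on the defining recurrence $b_{n+1} = b_{n} + 3b_{n-1} + b_{n-2}$. The base case $n=1$ is $b_2 - 2b_1 - b_0 = 1 - 2 - 0 = -1 = (-1)^1$. For the inductive step, rewriting the recurrence at index $n+2$ as $b_{n+2} = b_{n+1} + 3b_n + b_{n-1}$ and rearranging gives $b_{n+2} - 2b_{n+1} - b_n = -(b_{n+1} - 2b_n - b_{n-1})$, so the left-hand side equals $-(-1)^n = (-1)^{n+1}$, which is exactly the claim at $n+1$. Alternatively, the same relation drops out immediately by combining Corollary~\ref{l:22}, which gives $b_{n+1}-b_{n-1}=Q_{n-1}$, with the determinant identity $(Q_{n-1}-2b_n)(-1)^n=1$ of the determinant-one matrix, since $(-1)^n=\pm1$ forces $Q_{n-1}-2b_n=(-1)^n$.

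With the auxiliary relation in hand, identity \eqref{i:9} follows by a direct substitution: by Lemma~\ref{l:4} we have $b_n + E_n = 2b_n + b_{n-1}$, and the auxiliary relation, read as $b_{n+1} = 2b_n + b_{n-1} + (-1)^n$, gives $b_n + E_n = b_{n+1} - (-1)^n = b_{n+1} + (-1)^{n+1}$. For \eqref{i:10} I would avoid a fresh computation: adding $b_n$ to both sides of \eqref{i:9} yields $2b_n + E_n = (b_{n+1}+b_n) + (-1)^{n+1}$, and applying Lemma~\ref{l:4} at index $n+1$ to replace $b_{n+1}+b_n$ by $E_{n+1}$ produces $2b_n + E_n = E_{n+1} + (-1)^{n+1}$, as required.

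The only real point of care is the alternating sign: all the arithmetic content of the lemma is concentrated in the $(-1)^n$ appearing in the auxiliary relation, so the step that must be handled precisely is the sign flip in the inductive passage, or, in the alternative derivation, the parity of $(-1)^n$ supplied by the determinant identity. Once that is pinned down, the remaining manipulations are purely linear substitutions via Lemma~\ref{l:4}.
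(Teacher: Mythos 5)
Your proof is correct. The core mechanism is the same as the paper's: an induction that exploits the fact that the recurrence $b_{n+2}=b_{n+1}+3b_n+b_{n-1}$ flips the sign of the combination $b_{n+1}-2b_n-b_{n-1}$, followed by the use of Lemma~\ref{l:4} to translate between $E$ and $b$; your treatment of \eqref{i:10} (add $b_n$ to \eqref{i:9} and apply Lemma~\ref{l:4} at index $n+1$) is exactly the paper's. What you do differently is to pre-substitute $E_n=b_n+b_{n-1}$ so that the induction is carried out on the clean auxiliary identity $b_{n+1}-2b_n-b_{n-1}=(-1)^n$ involving the $b$-sequence alone, whereas the paper runs the induction directly on \eqref{i:9} and interleaves Lemma~\ref{l:4} inside the inductive step; your version isolates the sign flip more transparently but is otherwise the same argument. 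Your alternative, non-inductive derivation --- combining Corollary~\ref{l:22}, $Q_{n-1}=b_{n+1}-b_{n-1}$, with the determinant identity $(Q_{n-1}-2b_n)(-1)^n=1$ to force $Q_{n-1}-2b_n=(-1)^n$ --- is a genuinely different route that the paper does not take; it is legitimate and free of circularity, since both of those results are established earlier without appeal to Lemma~\ref{l:29}, and it has the advantage of explaining \emph{why} the $(-1)^n$ appears (it is the determinant of $u^n$) rather than merely verifying it.
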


\begin{proof}
We will use the principle of mathematical induction (PMI). When $n = 1$,
\[
 b_{1}+E_{1}= 1+1 = (-1)^{2} + b_{2}.
\] 
so the result is true. We assume it is true for any positive integer $n=k$:
\[
 b_{k}+E_{k} = (-1)^{k+1} + b_{k+1}.
\] 
Now, we show that it is true for $n = k + 1$. Using the recurrence relation for \( b_n \), we compute:

\begin{align*}       
(-1)^{n+2} + b_{n+2}  &= (-1)^{n+2} + b_{n+1} + 3b_{n} + b_{n-1}\\
                      &= ((-1)^{n}+b_{n}) + (b_{n+1}+b_{n}) + (b_{n}+b_{n-1})\\
                      &= \big( (-1)^{n} + b_{n} \big) + E_{n+1} + E_{n}\\                       &\quad \text{(by the identity  Pell-numbers; see Lemma~\ref{l:4}))}\\                      
                      &= (-1)^{n}+ (b_{n}+E_{n}) + E_{n+1} \\                      
                      &= (-1)^{n} + ((-1)^{n+1} + b_{n+1}) + E_{n+1} \quad \text{(by the induction hypothesis)}  \\
                      &= b_{n+1}+E_{n+1}.
\end{align*}

Thus, the identity~\ref{i:9} holds for \( n = k+1 \); therefore, by induction, it is valid for all positive integers \( n \).

\medskip 

Now, we have 
\begin{align*}       
2\,b_{n} + E_{n}      &= b_{n}+(b_{n} + E_{n})\\
                      &= b_{n}+((-1)^{n+1}+b_{n+1})\quad \text{(by identity~\ref{i:9})} \\
                      &=(-1)^{n+1} + ( b_{n+1}+b_{n} )\\
                      &=(-1)^{n+1} + E_{n+1}  \quad \text{(by the identity  Pell-numbers; see Lemma~\ref{l:4})}.                     
\end{align*}
Thus, formula~\ref{i:10} is verified. 
\end{proof}

\begin{lem}
Let \( u \) be the Pell matrix defined by
\[
u = \begin{pmatrix} 
                    1   &  0   &  1 \\
                    0   &  1   &  1 \\
                    1   &  1   &  1\\
\end{pmatrix}.
\] Then,  the inverse of the matrix \( u^n \) is:

\[
u^{-n} = 
\begin{pmatrix} \vspace{0.2cm}
 1+(-1)^{n}b_{n} &  (-1)^{n}b_{n}    &  (-1)^{n-1}E_{n} \\ \vspace{0.2cm}
(-1)^{n}b_{n} & 1+(-1)^{n}b_{n}   &  (-1)^{n-1}E_{n} \\
(-1)^{n-1}E_{n}& (-1)^{n-1}E_{n}  &  (-1)^{n}Q_{n-1} \\
\end{pmatrix}.
\]
Where \( Q_{n}  = 2Q_{n-1}  + Q_{n-2} \), \( Q_0 = 1 \), \( Q_1 = 3 \), this is the generalized Pell sequence.  Also, \(b_{n}  = b_{n-1}  + 3b_{n-2} + b_{n-3} \), \, \( b_0 = 0 \), \( b_1 = 1 \), \( b_2 = 1 \).
\end{lem}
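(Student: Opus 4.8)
The plan is to proceed by induction on $n$, exactly parallel to the inverse-power lemma of Section~2. The base case requires $u^{-1}$ explicitly. Since $\det(u)=-1$ (as noted in Corollary~\ref{c:28}), inverting via the adjugate gives
\[
u^{-1}=\begin{pmatrix} 0 & -1 & 1 \\ -1 & 0 & 1 \\ 1 & 1 & -1 \end{pmatrix},
\]
which one checks by confirming $u\,u^{-1}=I$. Substituting $n=1$ into the claimed formula, together with $b_1=1$, $E_1=1$, $Q_0=1$, reproduces this matrix entry-by-entry, so the base case holds.

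For the inductive step I would assume the formula for $u^{-n}$ and form the product $u^{-(n+1)}=u^{-n}u^{-1}$. Abbreviating $p=1+(-1)^nb_n$, $q=(-1)^nb_n$, $r=(-1)^{n-1}E_n$, $s=(-1)^nQ_{n-1}$, so that $u^{-n}$ has the symmetric shape $\left(\begin{smallmatrix} p&q&r\\ q&p&r\\ r&r&s\end{smallmatrix}\right)$, the multiplication by $u^{-1}$ is routine and yields the symmetric matrix whose first row is $(-q+r,\,-p+r,\,p+q-r)$ and whose third row is $(-r+s,\,-r+s,\,2r-s)$. Thus only four distinct entries must be identified.

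The heart of the proof is matching these entries against the claimed formula at index $n+1$, using the sign collapse $(-1)^{n-1}=(-1)^{n+1}=-(-1)^n$. For the $(1,1)$ and $(1,2)$ entries I would factor out $(-1)^{n+1}$ and invoke identity~\ref{i:9} of Lemma~\ref{l:29}, namely $b_n+E_n=(-1)^{n+1}+b_{n+1}$; the product $(-1)^{n+1}\cdot(-1)^{n+1}=1$ is precisely what produces the free $+1$ in $1+(-1)^{n+1}b_{n+1}$ on the diagonal and its cancellation off the diagonal. For the $(1,3)$ entry I would use identity~\ref{i:10}, $2b_n+E_n=(-1)^{n+1}+E_{n+1}$, which collapses $1+(-1)^n(2b_n+E_n)$ to $(-1)^nE_{n+1}$. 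The third-row entries are simpler: $-r+s=(-1)^n(E_n+Q_{n-1})$ reduces to $(-1)^nE_{n+1}$ by identity~\ref{id:4} of Lemma~\ref{l:17}, and $2r-s=(-1)^{n+1}(2E_n+Q_{n-1})$ reduces to $(-1)^{n+1}Q_n$ by identity~\ref{id:5}. The remaining entries follow from symmetry, since $(2,1)=(1,2)$, $(2,2)=(1,1)$, $(2,3)=(1,3)$, and $(3,2)=(3,1)$.

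The only genuine obstacle is bookkeeping: one must track the alternating signs carefully and pair the correct recurrence identity with each entry, since a single off-by-one in the exponent of $-1$ would break the cancellations that generate the clean form $1+(-1)^{n+1}b_{n+1}$. Once the four identities from Lemmas~\ref{l:17} and~\ref{l:29} are lined up with the four distinct matrix entries, the equality $u^{-n}u^{-1}=u^{-(n+1)}$ is immediate, and the induction is complete.
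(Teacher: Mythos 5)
Your proposal is correct and follows essentially the same route as the paper's own proof: induction with the explicit base case $u^{-1}$, forming $u^{-n}u^{-1}$, and then closing the induction with the same four identities ($b_n+E_n=(-1)^{n+1}+b_{n+1}$, $2b_n+E_n=(-1)^{n+1}+E_{n+1}$, $E_n+Q_{n-1}=E_{n+1}$, $2E_n+Q_{n-1}=Q_n$) from Lemmas~\ref{l:17} and~\ref{l:29}. The only cosmetic difference is your abbreviation of the four distinct entries as $p,q,r,s$ before matching, which the paper does not do; the sign bookkeeping and cancellations you describe check out.
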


\begin{proof}
By the principle of mathematical induction, When $n = 1$, 
\[
u^{-1} = 
\begin{pmatrix}
  0   &  -1 &  1 \\
 -1   &   0 &  1 \\
  1   &   1 & -1
\end{pmatrix}
\]
so the result is true. Now, assume it is true for an arbitrary positive integer $n$: 
\[
u^{-n} = 
\begin{pmatrix} \vspace{0.2cm}
 1+(-1)^{n}b_{n} &  (-1)^{n}b_{n}    &  (-1)^{n-1}E_{n} \\ \vspace{0.2cm}
(-1)^{n}b_{n} & 1+(-1)^{n}b_{n}   &  (-1)^{n-1}E_{n} \\
(-1)^{n-1}E_{n}& (-1)^{n-1}E_{n}  &  (-1)^{n}Q_{n-1} 
\end{pmatrix}
\]
Then we compute the product \( u^{-n}u^{-1} \) using the given matrices:

\[
u^{-n}u^{-1}  = 
\begin{pmatrix} \vspace{0.2cm}
 1+(-1)^{n}b_{n} &  (-1)^{n}b_{n}    &  (-1)^{n-1}E_{n} \\ \vspace{0.2cm}
(-1)^{n}b_{n} & 1+(-1)^{n}b_{n}   &  (-1)^{n-1}E_{n} \\
(-1)^{n-1}E_{n}& (-1)^{n-1}E_{n}  &  (-1)^{n}Q_{n-1} \\
\end{pmatrix}
\begin{pmatrix}
 0   &  -1 &  1 \\
 -1   &  0 &  1 \\
 1   &  1 & -1
\end{pmatrix}
\]
Carrying out the matrix multiplication, we obtain:
{\small
\[
u^{-n}u^{-1}  = 
\begin{pmatrix} \vspace{0.2cm}
(-1)^{n+1}(b_{n}+E_{n})  & -1+(-1)^{n+1}(b_{n}+E_{n})  &  1+(-1)^{n}(2b_{n}+E_{n}) \\ \vspace{0.2cm}
-1+(-1)^{n+1}(b_{n}+E_{n})  & (-1)^{n+1}(b_{n}+E_{n})  &  1+(-1)^{n}(2b_{n}+E_{n}) \\
(-1)^{n}(E_{n}+Q_{n-1}) &  (-1)^{n}(E_{n}+Q_{n-1}) & (-1)^{n+1}(2E_{n}+Q_{n-1})
\end{pmatrix}
\]}
Recall that the sequences \( (E_n) \), \( (Q_n) \), and  \( (b_n) \) satisfy the following relations (see Lemma ~\ref{l:17} and Lemma~\ref{l:29}).
\begin{align*}       
 2\,E_n + Q_{n-1} &= Q_{n}  \\
 E_n + Q_{n-1}  &= E_{n+1} \\
 b_{n}+E_{n}    &= (-1)^{n+1} + b_{n+1}  \\
 2\,b_{n}+E_{n}   &= (-1)^{n+1} + E_{n+1}
\end{align*}
Substituting into the matrix, we get:

\[
u^{-n}u^{-1}  = 
\begin{pmatrix} \vspace{0.2cm}
 1+(-1)^{n+1}b_{n+1}      &  (-1)^{n+1}b_{n+1}    &  (-1)^{n}E_{n+1} \\ \vspace{0.2cm}
(-1)^{n+1}b_{n+1}         & 1+(-1)^{n+1}b_{n+1}   &  (-1)^{n}E_{n+1} \\
(-1)^{n}E_{n+1}           & (-1)^{n}E_{n+1}       &  (-1)^{n+1}Q_{n} \\
\end{pmatrix}
\]
Hence,
\[
u^{-n}u^{-1} = u^{-(n+1)}
\]
This completes the proof.
\end{proof}

\begin{cor}
For all positive integer \( n \geq 1 \), following equalities hold:
\begin{align}
E^2_{n} &= (-1)^n r_{n} +  b_{n}Q_{n-1}  ,\label{i:27} \\ 
Q_{n-1} &= (-1)^{n} + 2\,b_{n} . \label{i:28}
\end{align}
\end{cor}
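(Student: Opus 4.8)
The plan is to exploit the relation $U^n U^{-n} = I_3$, exactly in the spirit of the proof of Corollary~\ref{c:23}. From Lemma~\ref{l:27} we have
\[
U^n = \begin{pmatrix} a_n & r_n & E_n \\ r_n & a_n & E_n \\ E_n & E_n & r_n + a_n \end{pmatrix},
\]
and the preceding lemma records the entries of $U^{-n}$ in terms of $b_n$, $E_n$, and $Q_{n-1}$. First I would form the product $U^n U^{-n}$ and equate it, entry by entry, to the identity matrix. Since both factors are symmetric the product is symmetric, so only a few representative entries need to be examined; each one will collapse to one of the two claimed identities once the auxiliary relation $a_n + r_n = Q_{n-1}$ from~\eqref{i:24} is inserted.

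For identity~\eqref{i:27} I would read off the $(1,2)$ entry, i.e.\ the dot product of the first row of $U^n$ with the second column of $U^{-n}$. Expanding and collecting the terms that carry a factor $b_n$, their coefficient is $b_n(a_n + r_n)$, which becomes $b_n Q_{n-1}$ after substituting~\eqref{i:24}. Using $(-1)^{n-1} = -(-1)^n$ to merge the sign factors, the $(1,2)$ entry simplifies to $r_n + (-1)^n\bigl(b_n Q_{n-1} - E_n^2\bigr)$. Setting this equal to $0$ (the off-diagonal value of $I_3$) and multiplying through by $(-1)^n$ yields $E_n^2 = (-1)^n r_n + b_n Q_{n-1}$, which is exactly~\eqref{i:27}.

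For identity~\eqref{i:28} I would instead use the $(3,1)$ entry, pairing the third row $(E_n, E_n, r_n + a_n)$ of $U^n$ with the first column of $U^{-n}$. Substituting $r_n + a_n = Q_{n-1}$ from~\eqref{i:24}, the entry factors cleanly as $E_n\bigl[1 + (-1)^n(2b_n - Q_{n-1})\bigr]$, and this must vanish. Since $E_n \neq 0$ for $n \geq 1$, the bracket must be zero, and solving gives $Q_{n-1} = (-1)^n + 2b_n$, which is~\eqref{i:28}. As a consistency check, the diagonal entries $(1,1)$ and $(3,3)$ equated to $1$ reproduce the Cassini-type relation $Q_{n-1}^2 - 2E_n^2 = (-1)^n$, confirming that the system is coherent.

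There is no genuine obstacle here beyond careful bookkeeping, since the argument is routine matrix multiplication followed by substitution. The only points demanding attention are the uniform handling of the sign factor $(-1)^{n-1} = -(-1)^n$ throughout the expansion, the timely use of $a_n + r_n = Q_{n-1}$ to collapse the $b_n$- and $Q_{n-1}$-terms, and the observation that $E_n \neq 0$ for $n \geq 1$, which is what licenses cancelling $E_n$ in the derivation of~\eqref{i:28}.
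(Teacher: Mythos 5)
Your proposal is correct and takes essentially the same route as the paper's own proof: both form the product $U^{n}U^{-n}=I_3$, substitute $a_n+r_n=Q_{n-1}$ from Corollary~\ref{c:32}, and read the two identities off the entries of the resulting matrix. The only difference is cosmetic bookkeeping (you isolate the $(1,2)$ and $(3,1)$ entries explicitly and note $E_n\neq 0$ when cancelling), which matches the paper's computation of the entries $B$ and $C$.
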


\begin{proof}
Let \( U \) be the Pell matrix defined by
\[
U = \begin{pmatrix} 
                    1   &  0   &  1 \\
                    0   &  1   &  1 \\
                    1   &  1   &  1\\
\end{pmatrix}.
\] Then,  the inverse of the matrix \( U^n \) is:
\[
U^{-n} = 
\begin{pmatrix} \vspace{0.2cm}
 1+(-1)^{n}b_{n} &  (-1)^{n}b_{n}    &  (-1)^{n-1}E_{n} \\ \vspace{0.2cm}
(-1)^{n}b_{n} & 1+(-1)^{n}b_{n}   &  (-1)^{n-1}E_{n} \\
(-1)^{n-1}E_{n}& (-1)^{n-1}E_{n}  &  (-1)^{n}Q_{n-1} \\
\end{pmatrix}
\]
Where \( Q_{n}  = 2Q_{n-1}  + Q_{n-2} \), \( Q_0 = 1 \), \( Q_1 = 3 \), this is the generalized Pell sequence.  Also, \(b_{n}  = b_{n-1}  + 3b_{n-2} + b_{n-3} \), \, \( b_0 = 0 \), \( b_1 = 1 \), \( b_2 = 1 \).

\medskip 

On the other hand, we have defined \( U^n \) as follows: 
\[
U^{n} =  \begin{pmatrix} \vspace{0.2cm}
                  a_{n} & r_{n} & E_{n} \\ \vspace{0.2cm}
                  r_{n} & a_{n} & E_{n} \\
                  E_{n} & E_{n} & r_{n}+a_{n} \\
\end{pmatrix}.
\]   
The product \( U^n U^{-n} \) yields the following \( 3 \times 3 \) matrix:

\[
U^{n}U^{-n} =
\begin{bmatrix}
A  & B  & C \\ 
B  & A  & C \\ 
C  & C  & D
\end{bmatrix},
\]
where
\[
\begin{aligned}
A &= a_{n} + (-1)^n b_{n} (a_{n} +  r_{n}) + (-1)^{n-1}E^2_{n},   \\ 
B &= r_{n} + (-1)^n b_{n} (a_{n} +  r_{n}) + (-1)^{n-1}E^2_{n},   \\ 
C &= (-1)^{n-1} (a_{n} + r_{n})E_{n} + (-1)^{n}Q_{n-1}E_{n}, \\ 
C &= (-1)^{n-1} (a_{n} + r_{n})E_{n} + (1+2(-1)^nb_{n})E_{n}, \\ 
D &= 2(-1)^{n-1}E_n^2 + (-1)^n (r_n + a_{n})Q_{n-1} .
\end{aligned}
\]
On the other hand, we know that
\[
U^n U^{-n} = 
 \begin{pmatrix} \vspace{0.2cm}
                  1  & 0 & 0 \\ \vspace{0.2cm}
                  0  & 1 & 0 \\ 
                  0  & 0 & 1 
\end{pmatrix}.
\]
Thus,
\[
\begin{aligned}
(-1)^{n-1} &= (-1)^{n-1} a_{n} -  b_{n} (a_{n} +  r_{n}) + E^2_{n} ,  \\ 
0 &= (-1)^{n-1} r_{n} - b_{n} (a_{n} +  r_{n}) + E^2_{n} ,  \\ 
0 &=  (a_{n} + r_{n})E_{n} - Q_{n-1}E_{n}, \\ 
0 &= (-1)^{n-1} (a_{n} + r_{n})E_{n} + (1+2(-1)^nb_{n})E_{n} ,\\ 
(-1)^{n-1} &= 2E_n^2 - (r_n + a_{n})Q_{n-1}. \quad \text{ (see Corollary~\ref{c:23}) } \\ 
\end{aligned}
\]
Therefore, by equating the two matrices obtained through matrix multiplication and applying the relation from Corollary~\ref{c:32},  
\[
a_{n} + r_{n} = Q_{n-1},
\]  
we establish the identities stated in the corollary.
\end{proof}

\subsection{Diagonalization of the Generating Matrix and Binet’s Formula}

\begin{thm}
Let $n$ be an integer. The  Binet formula of the sequence $a_{n}$ and $r_{n}$ is
\begin{align}
a_{n} &= \dfrac{1}{2}+\dfrac{1}{4}\left(\lambda_2^{n}+\lambda_3^{n}\right), \\  
r_{n} &= -\dfrac{1}{2}+\dfrac{1}{4}\left(\lambda_2^{n}+\lambda_3^{n}\right) ,\\ 
 a_{n} &=  \dfrac{1+Q_{n-1}}{2},\\
 r_{n} &=  \dfrac{-1+Q_{n-1}}{2}.
\end{align}
\end{thm}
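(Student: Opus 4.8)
The plan is to diagonalize the generating matrix $U$ of Lemma~\ref{l:27} and to read off the sequences $a_n$ and $r_n$ from the $(1,1)$ and $(1,2)$ entries of $U^n = PD^nP^{-1}$, exactly as in the diagonalization theorems already proved in the preceding sections. First I would compute the characteristic polynomial of $U$, which factors as $(1-\lambda)\big[(1-\lambda)^2 - 2\big]$, giving the three distinct eigenvalues $\lambda_1 = 1$, $\lambda_2 = 1-\sqrt{2}$, and $\lambda_3 = 1+\sqrt{2}$; since they are distinct, $U$ is diagonalizable. Note that the constant eigenvalue here is $\lambda_1 = 1$ (not $-1$, as in the determinant-one case), which is precisely why the Binet formulas carry the constant terms $\pm\frac{1}{2}$ rather than $\pm\frac{(-1)^n}{2}$.

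Next I would solve $(U - \lambda_i I)v = 0$ for each eigenvalue, obtaining (up to scaling) $v_1 = (1,-1,0)^{T}$ for $\lambda_1 = 1$, together with $v_2 = (-\frac{\sqrt{2}}{2}, -\frac{\sqrt{2}}{2}, 1)^{T}$ and $v_3 = (\frac{\sqrt{2}}{2}, \frac{\sqrt{2}}{2}, 1)^{T}$ for $\lambda_2$ and $\lambda_3$; these share the same symmetric shape seen in the proof for Lemma~\ref{l:5}. Assembling them as the columns of $P$, inverting to obtain $P^{-1}$, and forming $D^n = \mathrm{diag}(1, \lambda_2^n, \lambda_3^n)$, I would expand the product $PD^nP^{-1}$. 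Comparing its $(1,1)$ and $(1,2)$ entries with the matrix form of $U^n$ in Lemma~\ref{l:27}, whose $(1,1)$ and $(1,2)$ entries are precisely $a_n$ and $r_n$, yields the first two asserted identities
\[
a_n = \frac{1}{2} + \frac{1}{4}\big(\lambda_2^{n} + \lambda_3^{n}\big), \qquad r_n = -\frac{1}{2} + \frac{1}{4}\big(\lambda_2^{n} + \lambda_3^{n}\big).
\]

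For the two closed forms in terms of $Q_{n-1}$, I would avoid any further matrix computation and instead invoke the Binet formula~\ref{i:2} from the first section, which states $Q_{n-1} = \frac{1}{2}\big[(1+\sqrt{2})^{n} + (1-\sqrt{2})^{n}\big]$, that is, $\lambda_2^{n} + \lambda_3^{n} = 2Q_{n-1}$. Substituting this into the two formulas above immediately gives $a_n = \frac{1+Q_{n-1}}{2}$ and $r_n = \frac{-1+Q_{n-1}}{2}$. Equivalently, these last two identities follow purely algebraically by solving the $2\times 2$ system formed from $a_n + r_n = Q_{n-1}$ (identity~\ref{i:24} in Corollary~\ref{c:32}) and $a_n - r_n = 1$ (Lemma~\ref{l:24}).

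The only delicate step is the bookkeeping in the middle paragraph: computing $P^{-1}$ without sign errors and checking that the diagonal and off-diagonal entries of $PD^nP^{-1}$ collapse to the stated form, in particular that the coefficient of $\lambda_1^{n} = 1^{n}$ is $+\frac{1}{2}$ for $a_n$ and $-\frac{1}{2}$ for $r_n$, while the coefficients of $\lambda_2^{n}$ and $\lambda_3^{n}$ are both $\frac{1}{4}$. Because the eigenstructure is of the same type already handled for the determinant-one matrix, no genuinely new obstacle appears, and in any case the $Q_{n-1}$ forms can be reached independently through the algebraic identities just cited.
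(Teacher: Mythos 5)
Your proposal is correct and follows essentially the same route as the paper: diagonalize the matrix of Lemma~\ref{l:27} with eigenvalues $1$, $1-\sqrt{2}$, $1+\sqrt{2}$ and the same (up to sign) eigenvectors, read $a_n$ and $r_n$ off the entries of $PD^nP^{-1}$, and convert to the $Q_{n-1}$ forms via $\lambda_2^{n}+\lambda_3^{n}=2Q_{n-1}$. The alternative derivation you mention, solving $a_n+r_n=Q_{n-1}$ together with $a_n-r_n=1$, is a valid shortcut for the last two identities but is not needed.
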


\begin{proof}
Let the matrix $U$ be as in Lemma~\ref{l:27}. If we calculate the eigenvalues and eigenvectors of the matrix $U$ are
\[
\lambda_1=1, \quad \lambda_2=1-\sqrt{2}, \quad \lambda_3=1+\sqrt{2}
\]
and

\[v_1=
\begin{pmatrix}
-1 \\ \\
 1 \\  \\
 0
\end{pmatrix}, \, v_2=
\begin{pmatrix}
-\dfrac{\sqrt{2}}{2} \\ \\
-\dfrac{\sqrt{2}}{2} \\  \\
1
\end{pmatrix}, \, v_3=
\begin{pmatrix}
\dfrac{\sqrt{2}}{2} \\ \\
\dfrac{\sqrt{2}}{2} \\ \\
1
\end{pmatrix}
\]
respectively. Then, we can diagonalize of the matrix $U$ by
\[
D = P^{-1}UP
\]
where
\[
P = (v_1, v_2, , v_3) = 
\begin{pmatrix}
-1 & \dfrac{\lambda_2-1}{2} & \dfrac{\lambda_3-1}{2}  \\ \\
1 & \dfrac{\lambda_2-1}{2} & \dfrac{\lambda_3-1}{2}  \\ \\
0 & 1 & 1
\end{pmatrix},
\quad 
P^{-1} = 
\begin{pmatrix}
-\dfrac{1}{2}  & \dfrac{1}{2}  & 0 \\ \\ 
\dfrac{\lambda_2-1}{4} & \dfrac{\lambda_2-1}{4} & \dfrac{1}{2} \\ \\ \\
\dfrac{\lambda_3-1}{4}  & \dfrac{\lambda_3-1}{4} & \dfrac{1}{2}
\end{pmatrix}
\]
and
\[
D = \text{diag}(\lambda_1,\lambda_2,,\lambda_3) =
\begin{pmatrix}
1 & 0 & 0 \\
0 & 1-\sqrt{2} & 0 \\
0 & 0 & 1+\sqrt{2}
\end{pmatrix}.
\]
From the properties of the similar matrices, we can write 
\begin{equation}\label{i:38}
U^n = P D^n P^{-1}
\end{equation}
where $n$ is any integer and
\[
D^n = 
\begin{pmatrix}
1 & 0 & 0 \\  \vspace{0.2cm}
0 & (1-\sqrt{2})^{n} & 0 \\
0 & 0 & (1+\sqrt{2})^{n}
\end{pmatrix} =\begin{pmatrix}
1 & 0 & 0 \\
0 & Q_{n-1}-E_{n}\sqrt{2} & 0 \\
0 & 0 & Q_{n-1} + E_{n}\sqrt{2} 
\end{pmatrix}.
\]
By equation~\ref{i:38}, we get
\[
U^{n}=\begin{pmatrix} \vspace{0.2cm}
 a_{n}  & r_{n}   & E_{n} \\ \vspace{0.2cm}
 r_{n}  & a_{n}   & E_{n}  \\
 E_{n}  & E_{n}   & r_{n} + a_{n} \\
\end{pmatrix}
=
\begin{bmatrix}\vspace{0.2cm}
\dfrac{1+Q_{n-1}}{2}  & \dfrac{-1+Q_{n-1}}{2} &  E_{n}\\ \vspace{0.2cm}
\dfrac{-1+Q_{n-1}}{2} & \dfrac{1+Q_{n-1}}{2}  &  E_{n}    \\ \vspace{0.2cm}
         E_{n} &  E_{n}        &   Q_{n-1} 
\end{bmatrix}
\]
\[
= 
\begin{pmatrix}
\dfrac{1}{2}+\dfrac{1}{4}\left(\lambda_2^{n}+\lambda_3^{n}\right)   & -\dfrac{1}{2}+\dfrac{1}{4}\left(\lambda_2^{n}+\lambda_3^{n}\right)    & \dfrac{1}{2\sqrt{2}}\left( \lambda_3^{n} - \lambda_2^{n}  \right) \\ \\  \vspace{0.2cm}
-\dfrac{1}{2}+\dfrac{1}{4}\left(\lambda_2^{n}+\lambda_3^{n}\right)   & \dfrac{1}{2}+\dfrac{1}{4}\left(\lambda_2^{n}+\lambda_3^{n}\right)    & \dfrac{1}{2\sqrt{2}}\left( \lambda_3^{n} - \lambda_2^{n}  \right) \\ \\
\dfrac{1}{2\sqrt{2}}\left( \lambda_3^{n} - \lambda_2^{n}  \right)  & \dfrac{1}{2\sqrt{2}}\left( \lambda_3^{n} - \lambda_2^{n}  \right) & \dfrac{1}{2}\left( \lambda_3^{n} + \lambda_2^{n}  \right)
\end{pmatrix}.
\]
Thus, the proof is completed. 
\end{proof}

\section{Divisibility Properties and Greatest Common Divisors of Terms in the Recurrence Relation $r_n$}

\begin{cor}
For every integer $m \geq 1$, the following congruences hold:
\begin{align}
\text{If $m$ is even,} \quad & r_{2m+1} \equiv  r_{2m}  \quad \,\,\, \pmod{4}, \\[0.3cm]
\text{If $m$ is odd,} \quad  & r_{2m+1} \equiv r_{2m} + 2  \pmod{4}.
\end{align}
\end{cor}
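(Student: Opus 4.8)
The plan is to reduce the two congruences to a single statement about the Pell sequence modulo $4$. By identity~\ref{id:3} of Lemma~\ref{l:17} we have $r_{n+1}=r_n+E_n$, and therefore $r_{2m+1}-r_{2m}=E_{2m}$. Consequently the first congruence (the case $m$ even) is equivalent to $E_{2m}\equiv 0\pmod 4$, and the second (the case $m$ odd) to $E_{2m}\equiv 2\pmod 4$, so it suffices to determine $E_{2m}$ modulo $4$.

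Next I would establish that the Pell sequence is periodic modulo $4$ with period $4$. Reducing the recurrence $E_n=2E_{n-1}+E_{n-2}$ modulo $4$ and starting from $(E_0,E_1)\equiv(0,1)$, the initial terms are $0,1,2,1,0,1,2,1,\dots$. Since the pair $(E_4,E_5)\equiv(0,1)$ coincides with $(E_0,E_1)$, and since any two consecutive residues determine the entire tail of the sequence through the recurrence, a routine induction gives $E_{n+4}\equiv E_n\pmod 4$ for every $n$. Hence
\[
E_n\equiv
\begin{cases}
0 & \text{if } n\equiv 0\pmod 4,\\
1 & \text{if } n\equiv 1\pmod 4,\\
2 & \text{if } n\equiv 2\pmod 4,\\
1 & \text{if } n\equiv 3\pmod 4.
\end{cases}
\]

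Finally I would split according to the parity of $m$. If $m$ is even then $2m\equiv 0\pmod 4$, so $E_{2m}\equiv 0$ and thus $r_{2m+1}\equiv r_{2m}\pmod 4$; if $m$ is odd then $2m\equiv 2\pmod 4$, so $E_{2m}\equiv 2$ and thus $r_{2m+1}\equiv r_{2m}+2\pmod 4$. The only step requiring any care is the periodicity claim, but once one regards the ordered pair $(E_n,E_{n+1})\bmod 4$ as the state of the linear recurrence, periodicity is immediate from the return of this state to its initial value $(0,1)$ after four steps; everything else is direct substitution. An equivalent route, which I would mention as a remark, is to verify the period-$4$ pattern $0,0,1,3$ of $r_n$ itself modulo $4$ and read off the differences directly, but the reduction through $E_{2m}$ is cleaner because it leans on the already established identity~\ref{id:3}.
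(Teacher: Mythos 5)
Your proof is correct, but it follows a genuinely different route from the paper's. The paper invokes the addition formula $r_{m+n}=r_m a_n+a_m r_n+E_m E_n$ (identity~\ref{i:21}) twice, first with $n=m$ and then with $n=m+1$, to derive the exact identity $r_{2m+1}=r_{2m}+2E_mQ_{m-1}$, and then reads off the congruences from the facts that $Q_{m-1}$ is always odd while $E_m$ has the parity of $m$. You instead apply the one-step identity $r_{n+1}=r_n+E_n$ of Lemma~\ref{l:17} directly at $n=2m$ to get $r_{2m+1}-r_{2m}=E_{2m}$, and then compute the Pell sequence modulo $4$, where the state $(E_n,E_{n+1})\bmod 4$ returns to $(0,1)$ after four steps, giving the residue pattern $0,1,2,1$. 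The two arguments are consistent, since identity~\ref{id:12} with $m=n$ gives $E_{2m}=2E_mQ_{m-1}$, so your $E_{2m}$ is exactly the paper's correction term. Your approach is more elementary in that it avoids the matrix-product identities of Corollary~\ref{c:31} entirely and leans only on \eqref{id:3} plus a finite periodicity check; the paper's approach buys a sharper exact formula for the difference $r_{2m+1}-r_{2m}$ in terms of $E_m$ and $Q_{m-1}$ rather than only its residue class modulo $4$.
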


\begin{proof}
We start with the general relation
\begin{align*}
r_{m+n} &= r_{m}a_{n}+a_{m}r_{n}+E_{m}E_{n} \quad\text{(by identity~\ref{i:21} for $r_{n+m}$; see Corollary~\ref{c:31})}. 
\end{align*}
In particular, for $n=m$ we obtain
\begin{align}
r_{m+m} &= r_{m}a_{m} + a_{m}r_{m} + E_{m}E_{m}, \nonumber \\
r_{2m} &= r_{m}a_{m} + a_{m}r_{m} + E_{m}^{2}. \label{i:39}
\end{align}
Next, consider the case $n=m+1$:
\begin{align*}
r_{m+(m+1)} &= r_{m}a_{m+1}+a_{m}r_{m+1}+E_{m}E_{m+1} \\
            &= r_{m}a_{m+1}+a_{m}r_{m+1}+E_{m}(E_{m}+r_{m}+a_{m}) \quad\text{(see Corollary~\ref{c:26} and \ref{l:24})} \\ 
            &= r_{m}(a_{m}+E_{m})+a_{m}(r_{m}+E_{m})+E_{m}(E_{m}+r_{m}+a_{m}) \quad\text{(see Lemmas~\ref{l:25})} \\           
            &= (r_{m}a_{m}+a_{m}r_{m}+E_{m}^{2}) + 2 E_{m}(r_{m}+a_{m}) \\
            &= r_{2m} + 2 E_{m}(r_{m}+a_{m}) \quad\text{(see Identity~\ref{i:39})}  \\
            &= r_{2m} + 2 E_{m}Q_{m-1}.
\end{align*}
Therefore, we deduce
\begin{align*}
r_{2m+1} &= r_{2m} + 2 E_{m}Q_{m-1}.
\end{align*}
Observe that every term of $Q_{m-1}$ is odd for all $m$. Moreover, $E_m$ is even whenever $m$ is even. It follows immediately that
\[
r_{2m+1} \equiv r_{2m} \pmod{4}.
\] which shows that $r_{2m}$ and $r_{2m+1}$ always have the same parity when $m$ is even.
On the other hand, if $m$ is odd then both $E_m$ and $Q_{m-1}$ are odd. In this case,
\[
r_{2m+1} = r_{2m} + 2E_m Q_{m-1} \equiv r_{2m} + 2 \pmod{4},
\]
which implies that $r_{2m}$ and $r_{2m+1}$ have opposite parity when $m$ is odd.
\end{proof}

\begin{cor}
Let $n$ be a positive integer. Then:  
\begin{itemize}
    \item If $n$ is even, then 
    \[
        r_{2n} = 4r_{n}(r_{n}+1).
    \]
    In particular, we have   \[
        4 \mid r_{2n}, \quad 
        r_{n} \mid r_{2n}, \quad 
        (r_{n}+1) \mid r_{2n}. \]   
    
  \item If $n$ is odd, then 
    \[
        r_{2n} - 1 = 4r_n (r_n + 1).
    \]
    In particular, we have 
    \[
        4 \mid (r_{2n}-1), \quad 
        r_{n} \mid (r_{2n}-1), \quad 
        (r_{n}+1) \mid (r_{2n}-1). \]
\end{itemize}
\end{cor}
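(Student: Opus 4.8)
The plan is to evaluate $r_{2n}$ in closed form by combining two facts already proved: the duplication formula $r_{2n}=r_na_n+a_nr_n+E_n^2$ recorded as equation~\eqref{i:39} in the proof of the preceding corollary, together with the relation $a_n=r_n+1$ from Lemma~\ref{l:24}. Substituting $a_n=r_n+1$ into~\eqref{i:39} immediately gives
\[
r_{2n}=2r_n(r_n+1)+E_n^2,
\]
so the desired factor $4r_n(r_n+1)$ will materialize once we show that $E_n^2$ itself equals $2r_n(r_n+1)$ up to a parity-dependent correction.

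To compute $E_n^2$, I would invoke the Cassini-type identity from Corollary~\ref{c:28}, namely $(a_n+r_n)^2-2E_n^2=(-1)^n$. Using $a_n+r_n=2r_n+1$ (again by Lemma~\ref{l:24}) and expanding $(2r_n+1)^2=4r_n^2+4r_n+1$, I solve for $E_n^2$ to obtain
\[
E_n^2=2r_n(r_n+1)+\frac{1-(-1)^n}{2}.
\]
The term $\tfrac{1-(-1)^n}{2}$ vanishes when $n$ is even and equals $1$ when $n$ is odd, so $E_n^2=2r_n(r_n+1)$ for even $n$ and $E_n^2=2r_n(r_n+1)+1$ for odd $n$.

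Substituting these two cases back into $r_{2n}=2r_n(r_n+1)+E_n^2$ yields $r_{2n}=4r_n(r_n+1)$ when $n$ is even and $r_{2n}=4r_n(r_n+1)+1$, that is $r_{2n}-1=4r_n(r_n+1)$, when $n$ is odd. The divisibility assertions are then read off directly from this factorization: the explicit factor $4$ gives $4\mid r_{2n}$ (respectively $4\mid(r_{2n}-1)$), while the factors $r_n$ and $r_n+1$ supply the remaining two divisibilities in each parity case.

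There is no serious obstacle here; the argument is a short algebraic manipulation of two established identities. The only points requiring care are the sign and parity bookkeeping of the term $\tfrac{1-(-1)^n}{2}$ arising from the $(-1)^n$ in the Cassini identity, and the observation that the $2r_n(r_n+1)$ contributed by $E_n^2$ exactly doubles the $2r_n(r_n+1)$ already present in $r_{2n}$, producing the clean coefficient $4$.
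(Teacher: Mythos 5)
Your argument is correct and follows essentially the same route as the paper's proof: both start from the duplication formula $r_{2n}=r_na_n+a_nr_n+E_n^2$ of Corollary~\ref{c:31}, eliminate $E_n^2$ via the Cassini-type identity of Corollary~\ref{c:28} together with $a_n=r_n+1$ from Lemma~\ref{l:24}, and split on the parity of $n$. The parity bookkeeping via $\tfrac{1-(-1)^n}{2}$ matches the paper's computation exactly.
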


\begin{proof}
We start by observing that
\[
r_{2n} = r_{n+n} = r_n a_n + a_n r_n + E_n^2 \quad\text{(by equation~\ref{i:21}, see Corollary ~\ref{c:31}).}
\]
To compute $E_n^2$, recall that
\[
2E_n^2 = (-1)^{\,n+1} + (a_n + r_n)^2   \quad\text{(see Corollary ~\ref{c:28}).}
\]
Since $a_n = r_n + 1$ \quad\text{(see Lemma ~\ref{l:24}).}, we get
\[
2E_n^2 = (-1)^{\,n+1} + (2r_n + 1)^2
       = (-1)^{\,n+1} + 4r_n^2 + 4r_n + 1,
\]
and therefore
\[
E_n^2 = \frac{(-1)^{\,n+1} + 4r_n^2 + 4r_n + 1}{2}.
\]

\medskip

\noindent\textbf{Case 1: $n$ even.}  
Here $(-1)^{\,n+1} = -1$, so
\[
E_n^2 = \frac{-1 + 4r_n^2 + 4r_n + 1}{2} 
      = 2r_n^2 + 2r_n.
\]
Thus,
\[
\begin{aligned}
r_{2n}     &= r_n a_n + a_n r_n + E_n^2  \\[0.3em]
           &= 2r_n a_n + 2(r_n^2 + r_n) \\[0.3em]
           &= 2r_n (a_n +  r_n + 1) \\[0.3em]
           &= 2r_n (r_n + 1 +  r_n + 1) \quad\text{(see Lemma ~\ref{l:24} )}\\[0.3em]
           &= 4r_n (r_n + 1).
\end{aligned}
\]
which shows that \(
                   4 \mid r_{2n}, \quad 
                   r_{n} \mid r_{2n}, \quad 
                  (r_{n}+1) \mid r_{2n}.\)
\medskip

\noindent\textbf{Case 2: $n$ odd.}  
Here $(-1)^{\,n+1} = 1$, hence
\[
E_n^2 = \frac{1 + 4r_n^2 + 4r_n + 1}{2} 
      = 2r_n^2 + 2r_n + 1.
\]
Therefore,
\[
r_{2n} = r_n a_n + a_n r_n + E_n^2
       = r_n a_n + a_n r_n + 2(r_n^2 + r_n) + 1,
\]
that is,
\[
\begin{aligned}
r_{2n} - 1 &= 2r_n a_n  + 2(r_n^2 + r_n) \\[0.3em]
           &= 2r_n (a_n +  r_n + 1) \\[0.3em]
           &= 2r_n (r_n + 1 +  r_n + 1) \quad\text{(see Lemma ~\ref{l:24} )} \\[0.3em]
           &= 4r_n (r_n + 1).
\end{aligned}
\]
and hence  \(
            4 \mid (r_{2n}-1), \quad 
            r_{n} \mid (r_{2n}-1), \quad 
           (r_{n}+1) \mid (r_{2n}-1).\)
\end{proof}

\begin{cor}
The following relations hold depending on the parity of $n$ and $k$:
\begin{itemize}

\item If $n$ and $k$ are even with $2 \leq k \leq n$, then 
\[
\gcd(r_{\,n}, \, r_{\,n-1})= \gcd\Bigl(r_{\,n-k+1} - r_{\,k-2}, \,\, r_{\,n-k} + r_{\,k-1}+1\Bigr) =1.
\]

\quad

\item If $n$ and $k$ are odd with $3 \leq k \leq n$, then 
\[
\gcd(r_{n}, \, r_{n-1}) = \gcd\bigl(r_{\,n-k+1} + r_{\,k-2}+1, \,\, r_{\,n-k} - r_{\,k-1}\bigr)=J_{\,\dfrac{n-1}{2}} .
\]
\end{itemize}
Where $J_{n}=6J_{n-2}-J_{n-4}$, and $J_{0}=0$, $J_{1}=1$, $J_{2}=4$, $J_{3}=7$.
\end{cor}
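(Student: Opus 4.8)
The plan is to convert the inhomogeneous recurrence $r_m = 2r_{m-1}+r_{m-2}+1$ into an honest (homogeneous) Pell recurrence, for which the greatest common divisor of consecutive terms is automatically preserved, and only afterwards to match the resulting reduced pair with the expressions appearing in the statement. This isolates the genuine content (the final value) from the purely formal first equality.

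First I would introduce the auxiliary sequence $t_j$ defined by the same relation $t_j = 2t_{j+1}+t_{j+2}+1$ with $t_0 = t_1 = 0$, and set $w_i = r_{n-i} - t_i$. Since $r$ and $t$ obey identical recurrences, the term $+1$ cancels and $w$ satisfies the homogeneous recurrence $w_{i+2} = w_i - 2w_{i+1}$, with $w_0 = r_n$ and $w_1 = r_{n-1}$. Each such step is a unimodular Euclidean step, so $\gcd(w_{i+1}, w_{i+2}) = \gcd(w_{i+1}, w_i)$ and hence $\gcd(w_i, w_{i+1}) = \gcd(r_n, r_{n-1})$ for every admissible $i$. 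This single observation yields the first equality in both parity cases at once. To pin down the relevant pair, I would solve the recurrence for $t_j$ (its characteristic roots are $-(1\mp\sqrt2)$), obtaining $t_j = -\tfrac12 + (-1)^j\big(\tfrac12 Q_{j-1} - E_j\big)$; feeding in $Q_{j-1} = 2r_j + 1$ (Corollary~\ref{c:16}) and $E_j = r_j + r_{j-1} + 1$ (Corollary~\ref{l:13a}) collapses this to $t_{k-1} = r_{k-2}$, $t_k = -r_{k-1}-1$ when $k$ is even, and to $t_{k-1} = -r_{k-2}-1$, $t_k = r_{k-1}$ when $k$ is odd. Substituting into $w_{k-1} = r_{n-k+1} - t_{k-1}$ and $w_k = r_{n-k} - t_k$ reproduces exactly $A_k = r_{n-k+1} - r_{k-2}$, $B_k = r_{n-k} + r_{k-1} + 1$ in the even case and $A_k = r_{n-k+1} + r_{k-2} + 1$, $B_k = r_{n-k} - r_{k-1}$ in the odd case, giving $\gcd(r_n, r_{n-1}) = \gcd(A_k, B_k)$ for all $k$ in range.

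It then remains to evaluate the common value. For $n$ even I would argue directly from the Cassini identity $(2r_n+1)^2 - 2E_n^2 = (-1)^n$ (Corollary~\ref{c:28}, using $a_n = r_n+1$): reducing modulo $d = \gcd(r_n, r_{n-1})$ and noting $E_n = r_n + r_{n-1} + 1 \equiv 1$ gives $-1 \equiv (-1)^n \equiv 1$, so $d \mid 2$; since for even $n$ exactly one of $r_n, r_{n-1}$ is odd (the residues of $r$ modulo $2$ are periodic with period four), $d$ is odd and therefore $d = 1$. For $n$ odd, write $m = (n-1)/2$ and choose $k$ so that the reduced pair degenerates: $k = m+1$ when $m$ is even forces $B_k = 0$ and $A_k = r_{m+1} + r_{m-1} + 1 = 2E_m$, while $k = m+2$ when $m$ is odd forces $A_k = -B_k = 2r_m + 1 = Q_{m-1}$. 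In both subcases $\gcd(A_k,B_k)$ equals $2E_m$ or $Q_{m-1}$, and I would confirm that the sequence determined by $J_{2\ell} = 2E_{2\ell}$, $J_{2\ell+1} = Q_{2\ell}$ satisfies $J_n = 6J_{n-2} - J_{n-4}$ with the stated seeds: the even-indexed subsequence of any Pell-type sequence obeys the two-step recurrence $x_\ell = 6x_{\ell-1} - x_{\ell-2}$, since $(1\pm\sqrt2)^2$ are the roots of $\lambda^2 - 6\lambda + 1$. Hence the value is precisely $J_{(n-1)/2}$.

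The main obstacle I anticipate is not the invariance, which is essentially free once the homogenization is in place, but the bookkeeping in the middle step: correctly evaluating $t_{k-1}$ and $t_k$ in each parity class and verifying that their signs line up with the deliberately asymmetric definitions of $A_k$ and $B_k$. A lighter secondary point is checking that the chosen $k$ stays within $3 \le k \le n$ and carries the required parity in each odd subcase, and matching the $J$-recurrence against its four initial values.
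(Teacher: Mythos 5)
Your argument is correct, and it is substantially more complete than the one the paper gives. Both proofs rest on the same germ for the first equality---gcd-preserving Euclidean reductions driven by the recurrence $r_m=2r_{m-1}+r_{m-2}+1$---but you formalize it by homogenizing: the backward companion sequence $t_j$ (same inhomogeneous relation run in reverse, $t_0=t_1=0$) makes $w_i=r_{n-i}-t_i$ satisfy $w_{i+2}=w_i-2w_{i+1}$, so the invariance $\gcd(w_{k-1},w_k)=\gcd(r_n,r_{n-1})$ is automatic, and the closed form $t_j=-r_{j-1}-1$ ($j$ even), $t_j=r_{j-1}$ ($j$ odd) --- which I checked against Corollaries~\ref{c:16} and~\ref{l:13a} and the values $t_2=-1,\ t_3=1,\ t_4=-4,\ t_5=8$ --- reproduces exactly the asymmetric pairs $A_k,B_k$ of the statement. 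The paper instead performs the reductions informally, displays a table evaluated at $n=k$, and then concedes that concluding the value $1$ requires ``an extra arithmetic hypothesis''; the odd case, including the entire identification with $J_{(n-1)/2}$, is waved off as ``established in the same manner'' with no derivation. You actually close both halves: for even $n$ the Cassini identity $(2r_n+1)^2-2E_n^2=(-1)^n$ (Corollary~\ref{c:28} with $a_n=r_n+1$) read modulo $d=\gcd(r_n,r_{n-1})$ forces $d\mid 2$, and the period-$4$ pattern $0,0,1,1$ of $r$ modulo $2$ (immediate from $r_m\equiv r_{m-2}+1$) makes $d$ odd, hence $d=1$; for odd $n=2m+1$ the degenerate choices $k=m+1$ resp.\ $k=m+2$ collapse the pair to $(2E_m,0)$ resp.\ $(Q_{m-1},-Q_{m-1})$ via $r_{m+1}+r_{m-1}+1=E_{m+1}-E_{m-1}=2E_m$ and $r_{m+1}-r_{m-1}=2r_m+1$, and the interleaved sequence $2E_{2\ell},Q_{2\ell}$ satisfies $J_n=6J_{n-2}-J_{n-4}$ with the stated seeds $0,1,4,7$ because $(1\pm\sqrt2)^2$ are the roots of $\lambda^2-6\lambda+1$. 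The only items worth adding in a write-up are the one-line induction for the mod-$2$ period and a remark on the trivial edge case $n=1$ (where $\gcd(r_1,r_0)=0=J_0$ and the range $3\le k\le n$ is empty); neither affects the validity of your approach, which in effect supplies the proof the paper only sketches.
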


\begin{proof}
The sequence $r_m$ satisfies the linear recurrence
\begin{equation}\label{recurrence}
r_n = 2r_{n-1} + r_{n-2} + 1, \text{ with  }  r_0 = 0 ,  r_1 = 0  \text{ and  }   r_2 = 1.
\end{equation}
Since the greatest common divisor is invariant under congruence, we may reduce each term modulo the other. The argument is based on two elementary facts: (i) \(\gcd(a,b)=\gcd(a-kb,b)\) (invariance of the gcd under adding/subtracting integer multiples), and (ii) the recurrence \eqref{recurrence} allows us to express terms with large indices in terms of nearby indices plus constants. Concretely, if $n$ and $k$ are even we obtain
\[
\gcd(r_{\,n}, \, r_{\,n-1}) =  \gcd\Bigl(r_{\,n-k+1} - r_{\,k-2}, \,\, r_{\,n-k} + r_{\,k-1}+1\Bigr)
\]
Evaluating this identity at \(n=k\) (as in the table) simply substitutes \(n=k\) in the right-hand side and yields the last column entries
\[
\gcd\bigl(r_{1}-r_{\,k-2},\; r_{0}+r_{\,k-1}+1\bigr).
\]
In particular, for the smallest case \(k=2\) one gets
\[
\gcd(r_1-r_0,\; r_0+r_1+1)=1.
\]
\begin{table}[h!]
\centering
\renewcommand{\arraystretch}{1.4} 
\setlength{\tabcolsep}{5pt} 
\begin{tabular}{|c|c|c|}
\hline
\textbf{$k$} & $\gcd\bigl(r_{\,n-k+1} - r_{\,k-2}, \; r_{\,n-k} + r_{\,k-1}+1\bigr)$  & \textbf{Valor para $n=k$} \\ \hline
$2$ & $\gcd\bigl(r_{\,n-1} - r_{0}, \; r_{\,n-2} + r_{1}+1\bigr)$ 
& $\gcd\bigl(r_{1} - r_{0}, \; r_{0} + r_{1}+1\bigr)=1$ \\ \hline
$4$ & $\gcd\bigl(r_{\,n-3} - r_{2}, \; r_{\,n-4} + r_{3}+1\bigr)$ 
& $\gcd\bigl(r_{1} - r_{2}, \; r_{0} + r_{3}+1\bigr)=1$ \\ \hline
$6$ & $\gcd\bigl(r_{\,n-5} - r_{4}, \; r_{\,n-6} + r_{5}+1\bigr)$ 
& $\gcd\bigl(r_{1} - r_{4}, \; r_{0} + r_{5}+1\bigr)=1$ \\ \hline
$8$ & $\gcd\bigl(r_{\,n-7} - r_{6}, \; r_{\,n-8} + r_{7}+1\bigr)$ 
& $\gcd\bigl(r_{1} - r_{6}, \; r_{0} + r_{7}+1\bigr)=1$ \\ \hline
$\vdots$ & $\vdots$ & $\vdots$ \\ \hline
$n$ & $\gcd\bigl(r_{\,1} - r_{\,n-2}, \; r_{\,0} + r_{\,n-1}+1\bigr)$ 
& $\gcd\bigl(r_{1} - r_{\,n-2}, \; r_{0} + r_{\,n-1}+1\bigr)=1$ \\ \hline
\end{tabular}
\end{table}

The equalities above are purely algebraic consequences of the recurrence and of the gcd invariance; however, to deduce that each entry in the last column is equal to \(1\) one needs an extra arithmetic hypothesis on the initial values \(r_0,r_1\). If
\[
\gcd(r_1-r_0,\; r_0+r_1+1)=1
\]
holds (this is a easily checkable condition on the initial pair \((r_0,r_1)\)), then by the reduction argument every row evaluated at \(n=k\) will also produce gcd \(1\). Thus, under the recurrence \eqref{recurrence} and the extra coprimality condition above, the table's last column entries are all equal to \(1\). The second part of the Corollary is established in the same manner, by reducing each term modulo the other.

\end{proof}

\begin{rem}
Following Erdős and Turán \cite{ErdosTuran1941}, a \emph{Sidon sequence} (or $B_{2}$-sequence) is a subset $A \subset \mathbb{N}$ such that all pairwise sums $a+b$ with $a,b \in A$ are distinct,  except for the trivial equality $a+b=b+a$.  We recall this concept here because it will play a role in the following theorem.
\end{rem}

\begin{lem}
Let $r_n$ be the sequence defined by
\[
r_n = 2r_{n-1} + r_{n-2} + 1, 
\quad r_0 = 0, \; r_1 = 0, \; r_2 = 1.
\]
Then, for all \(n \geq 1\), the following hold:
\begin{enumerate}
\item The partial sums \(s_n = \sum_{k=0}^{n} r_k\) satisfy the recurrence
\[
s_{n} = 3s_{n-1} - s_{n-2} - s_{n-3} + 1, 
\quad s_0 = 0, \; s_1 = 0, \; s_2 = 1.
\]

\item The inequality
\[
s_{n} < r_{n+1}
\]
holds for all \(n \geq 1\).

\item The set \(\{r_n : n \geq 1\}\) forms a Sidon sequence.
\end{enumerate}
\end{lem}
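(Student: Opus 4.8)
The plan is to treat the three parts in order, since part (2) feeds directly into part (3), while part (1) is an independent algebraic identity. For part (1), I would start from the telescoping relation $s_n - s_{n-1} = r_n$, valid by definition of the partial sums, and substitute it into the defining recurrence written as $r_n - 2r_{n-1} - r_{n-2} = 1$. Replacing $r_n = s_n - s_{n-1}$, $r_{n-1} = s_{n-1} - s_{n-2}$, and $r_{n-2} = s_{n-2} - s_{n-3}$ and collecting terms yields $s_n - 3s_{n-1} + s_{n-2} + s_{n-3} = 1$, which is exactly the asserted recurrence (valid for $n \geq 3$); the initial values $s_0 = 0$, $s_1 = 0$, $s_2 = 1$ are read off directly. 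This step is routine.

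For part (2) I would argue by induction on $n$. One first records that $r_n \geq 0$ for all $n$ (indeed $r_n \geq 1$ for $n \geq 2$), which is immediate from the recurrence and the nonnegative initial data. The base case $n = 1$ is the direct check $s_1 = r_0 + r_1 = 0 < 1 = r_2$. For the inductive step with $n \geq 2$, assuming $s_{n-1} < r_n$, one obtains $s_n = s_{n-1} + r_n < 2r_n$, while $r_{n+1} = 2r_n + r_{n-1} + 1 > 2r_n$ because $r_{n-1} \geq 0$; combining gives $s_n < r_{n+1}$ and closes the induction.

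Part (3) is the crux and relies on part (2). Since $r_0 = 0$, the inequality $s_n < r_{n+1}$ reads $\sum_{k=1}^{n} r_k < r_{n+1}$, so $\{r_n : n \geq 1\}$ is \emph{super-increasing}: each term strictly exceeds the sum of all earlier ones. I would first note that $r_1 < r_2 < r_3 < \cdots$, so the listed elements are distinct. To verify the Sidon property, suppose $r_a + r_b = r_c + r_d$ with $a \leq b$, $c \leq d$, and take $b$ to be the largest of the four indices (swapping the two pairs if needed). If $b > d$, then $c \leq d \leq b-1$: when $c < d$ the super-increasing bound gives $r_c + r_d \leq \sum_{k=1}^{b-1} r_k < r_b \leq r_a + r_b$, and when $c = d$ one instead uses $2r_c \leq 2r_{b-1} < 2r_{b-1} + r_{b-2} + 1 = r_b \leq r_a + r_b$. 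Both contradict the equality, so $b = d$; then $r_a = r_c$ forces $a = c$ by distinctness, and the two representations coincide.

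The step I expect to be delicate is part (3): making the super-increasing-to-Sidon passage watertight, in particular correctly dispatching the diagonal sums $r_c + r_c$ and confirming that the repeated value $r_0 = r_1 = 0$ does not manufacture a spurious coincidence. Because the Sidon condition concerns only two-term sums and the elements $r_1, r_2, \dots$ are genuinely distinct, the zero is harmless, but this point should be stated explicitly rather than glossed over. Everything else reduces to the elementary growth estimate already secured in part (2).
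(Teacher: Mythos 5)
Your proposal is correct and follows essentially the same route as the paper: the telescoping substitution for part (1), the induction $s_n < 2r_n < r_{n+1}$ for part (2), and the superincreasing-implies-Sidon argument via the maximal index for part (3). Your explicit treatment of the diagonal case $r_c + r_c$ (using $2r_{b-1} < r_b$ rather than the sum bound) is in fact slightly more careful than the paper's version, which silently assumes $r_c + r_d \le \sum_{k=0}^{m-1} r_k$ even when $c = d$.
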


\begin{proof}
By definition, \(s_n = \sum_{k=0}^n r_k\). Hence, for \(n \geq 3\),
\(
s_n - s_{n-1} = r_n.
\)
Substituting \(r_{n-1} = s_{n-1} - s_{n-2}\) and \(r_{n-2} = s_{n-2} - s_{n-3}\), we obtain
\[
s_n - s_{n-1} = 2(s_{n-1} - s_{n-2}) + (s_{n-2} - s_{n-3}) + 1.
\]
Solving for \(s_n\), we arrive at the third-order recurrence relation
\[
s_n = 3s_{n-1} - s_{n-2} - s_{n-3} + 1,
\]
valid for all \(n \geq 3\). The initial values are computed directly:
\[
s_0 = r_0 = 0, \qquad s_1 = r_0 + r_1 = 0, \qquad s_2 = r_0 + r_1 + r_2 = 1.
\]
We now prove by induction on \(n \geq 1\) that \(r_{n+1} > s_n\).  
For \(n=1\) we have \(s_1=0\) and \(r_2=1\), hence \(s_1 < r_2\). Assume that for some \(n \geq 1\) the inequality \(s_n < r_{n+1}\) holds. Observe that
\[
s_{n+1} = s_n + r_{n+1}.
\]
By the inductive hypothesis \(s_n < r_{n+1}\), therefore
\[
s_{n+1} < r_{n+1} + r_{n+1} = 2r_{n+1}.
\]
But from the recurrence for \(r_{n+2}\),
\[
2r_{n+1} < r_{n+2} = 2r_{n+1} + r_n + 1.
\]
Combining these inequalities we conclude that \(s_{n+1} < r_{n+2}\). Thus, the property holds for \(n+1\). By induction, we have \(s_n < r_{n+1}\) for all \(n \geq 1\).
Finally, any strictly increasing superincreasing sequence is a Sidon set. 
Suppose that 
\[
r_a + r_b = r_c + r_d,
\]
with \(a \leq b\) and \(c \leq d\), and let \(m = \max\{a,b,c,d\}\). 
If \(m\) appears only on one side, that side must be larger, since 
\[
\sum_{k=0}^{m-1} r_k < r_m .
\]
a contradiction. 
Consequently, \(m\) must occur on both sides. 
Cancelling \(r_m\) and repeating the same reasoning shows that the index pairs coincide. 
Hence every integer has a unique representation, up to order, as the sum of two elements of \(\{r_n\}\), which is precisely the Sidon property. 
This completes the proof of (3).
\end{proof}

\section{Classification of Binary $3 \times 3$ Matrices Associated with Pell Numbers}

Let $M_{n}(\{0,1\})$ denote the set of all $n \times n$ binary matrices, that is, matrices with entries in $\{0,1\}$.  
The \emph{conjugacy class} of a matrix $U \in M_{n}(\{0,1\})$ under invertible matrices is defined as
\[
u(U) \;=\; \{ P U P^{-1} \;:\; P \in GL_{n}(\{0,1\}) \},
\]
where $GL_{n}(\{0,1\})$ denotes the group of all invertible $n \times n$ matrices with entries in $\{0,1\}$.

By means of a computational verification of the $512$ binary $3 \times 3$ matrices, and their classification under conjugation by invertible matrices, it was found that only three classes arise that generate the Pell sequence.  
A representative of each class is given by

\[
u_1 = \begin{pmatrix}
0 & 0 & 1 \\
1 & 1 & 1 \\
1 & 1 & 1
\end{pmatrix}, \quad
u_2 = \begin{pmatrix}
0 & 1 & 1 \\
1 & 0 & 1 \\
1 & 1 & 1
\end{pmatrix}, \quad
u_3 = \begin{pmatrix}
1 & 0 & 1 \\
0 & 1 & 1 \\
1 & 1 & 1
\end{pmatrix}.
\]
Any other binary $3 \times 3$ matrix that generates the Pell sequence is conjugate to one of these three by an invertible binary matrix. This classification, inspired by the work of Martinez and Ceron~\cite{MartinezCeron2024}, was obtained using algorithms implemented in \textsf{SageMath}.

\subsection*{Acknowledgment}
We thank to Universidad del Cauca for the support to our research group ``Estructuras Algebraicas, Divulgaci\'on Matem\'atica y Teor\'ias Asociadas. @DiTa'' under the research project with ID 6314, entitled ``Clasificaci\'on del Centralizador de Matrices Binarias 3x3: Dimensión, Conmutatividad y Relaciones Estructurales''. We also thank to the anonymous referee for their helpful comments. This work is dedicated to my daughters especially to Sara Maria Martinez Ceron.

\medskip

\subsection*{Declarations}
\medskip

\subsection*{Ethical Approval:}
{Not applicable.}

\subsection*{Funding:}
{Not applicable.}

\subsection*{Availability of Data and Materials:}
{Not applicable.}


\bibliographystyle{amsplain}
\bibliography{xbib}
\end{document}